\documentclass[11pt]{article}

\usepackage[a4paper,margin=1in]{geometry}
\usepackage{amsmath,amssymb,amsthm,mathtools,bm}
\usepackage{enumitem}
\usepackage{hyperref}
\usepackage[nameinlink,capitalise]{cleveref}

\hypersetup{hidelinks}
\newtheorem{theorem}{Theorem}[section]
\newtheorem{lemma}[theorem]{Lemma}
\newtheorem{proposition}[theorem]{Proposition}
\newtheorem{corollary}[theorem]{Corollary}
\newtheorem{assumption}[theorem]{Assumption}
\newtheorem{definition}[theorem]{Definition}
\newtheorem{remark}[theorem]{Remark}

\crefname{assumption}{Assumption}{Assumptions}
\Crefname{assumption}{Assumption}{Assumptions}
\crefname{definition}{Definition}{Definitions}
\Crefname{definition}{Definition}{Definitions}
\crefname{remark}{Remark}{Remarks}
\Crefname{remark}{Remark}{Remarks}
\crefname{lemma}{Lemma}{Lemmas}
\Crefname{lemma}{Lemma}{Lemmas}
\crefname{theorem}{Theorem}{Theorems}
\Crefname{theorem}{Theorem}{Theorems}
\crefname{equation}{Eq.}{Eqs.}
\Crefname{equation}{Eq.}{Eqs.}

\newcommand{\R}{\mathbb R}
\newcommand{\Zp}{\mathbb Z_+}
\newcommand{\F}{\mathcal F}
\newcommand{\E}{\mathbb E}
\newcommand{\Uad}{\mathcal U_{ad}}
\newcommand{\ip}[2]{\left\langle #1,#2\right\rangle}
\newcommand{\norm}[1]{\left\|#1\right\|}

\title{Fully Coupled Nonlinear Forward-Backward Stochastic Difference Equations: Spectral Contraction and Infinite-Horizon Maximum Principle\thanks{This paper is supported by the National Key R\&D Program of China (No. 2022YFA1006101), the National Natural Science Foundation of China (No. 12371445), and the State Key Laboratory of Cryptography and Digital Economy Security, Shandong University (No. KFZD2505).}}
\author{Rui Chen\thanks{School of Mathematical Sciences, Fudan University, Shanghai 200433, China. Email: 22110180002@m.fudan.edu.cn}\ \ and\ \
  Qi  Zhang\thanks{Corresponding author. School of Mathematical Sciences and Laboratory of Mathematics for Nonlinear Science, Fudan University, Shanghai 200433, China; State Key Laboratory of Cryptography and Digital Economy Security, Shandong University, Jinan 250100, China. Email: qzh@fudan.edu.cn.}} 
\date{}

\begin{document}
\maketitle

\begin{abstract}
This paper develops an explicit spectral-contraction approach to studying fully coupled nonlinear
forward--backward stochastic difference equations on infinite
horizon and their applications to stochastic control. 
The estimates for this fully coupled system are assembled into an explicit two-dimensional nonnegative
matrix. Its spectral radius yields an explicit sufficient discount threshold,
and a corresponding equivalent weighted product norm is constructed to establish
the contraction property.
Moreover, for an infinite-horizon control problem with convex control constraints and an
accumulated discounted running cost, we derive a Pontryagin-type stochastic
maximum principle, its equivalent pointwise normal-cone formulation, and a
verification theorem. Finally, a recursive risk-adjusted portfolio example is given to demonstrate the applications of our theoretical results, and a projection-type sufficient optimality condition for this example is derived.
\end{abstract}

\noindent\textbf{Keywords:} backward stochastic difference equation; fully coupled forward--backward stochastic difference equation; 
infinite horizon; 
stochastic maximum principle;
portfolio optimization.

\noindent\textbf{MSC 2020:} 93E20; 93C55; 60H10. 

\section{Introduction}

Backward stochastic differential equations (BSDEs), introduced in their nonlinear form by Pardoux and Peng \cite{PardouxPeng1990}, have become a fundamental tool in stochastic analysis, mathematical finance, nonlinear expectations, and stochastic control. In a BSDE, the current value is determined recursively from a terminal condition and future information through a generator. The first component of the solution is commonly interpreted as a value, utility, or risk process, whereas the martingale integrand describes its sensitivity to the underlying noise. These interpretations lay a solid foundation for the important applications of recursive utility, nonlinear pricing, and dynamic risk evaluation; see Duffie and Epstein \cite{DuffieEpstein1992}, El Karoui, Peng and Quenez \cite{ElKarouiPengQuenez1997}, and Peng's theory of \(g\)-expectations \cite{Peng1997}.

The connection between backward stochastic equations and stochastic control is particularly evident in the stochastic maximum principle. Linear BSDEs arise as adjoint equations in the duality approach initiated by Bismut \cite{Bismut1973}. Nearly two decades later, Peng \cite{Peng1990} established a general stochastic maximum principle for nonlinear controlled diffusion systems. Maximum principles were subsequently extended to systems with more general noise structures, including jump processes; see Tang and Li \cite{TangLi1994}. In this framework, the forward state equation and the backward adjoint equation form a forward--backward stochastic system, while the Hamiltonian optimality condition provides a necessary condition for an optimal control.

The continuous-time theory of FBSDEs has developed some classical solvability methods. The four-step scheme of Ma, Protter and Yong \cite{MaProtterYong1994} deals with Markovian systems through associated partial differential equations. Monotonicity and continuation methods were developed by, among others, Hu and Peng \cite{HuPeng1995} and Yong \cite{Yong1997}. Fully coupled FBSDEs, where the forward equation is also coupled with the backward one, and their applications to optimal control are much more challenging. Relevant solvability results were established by Antonelli \cite{an}, Pardoux and Tang \cite{pa-ta}, and Peng and Wu \cite{PengWu1999}, to name but a few, and a comprehensive account can be found in Ma and Yong \cite{MaYong1999}. For infinite-horizon continuous-time FBSDEs, Peng and Shi \cite{PengShi2000} established solvability under monotonicity-type conditions.

In discrete time, backward stochastic difference equations (BS\(\Delta\)Es) provide the natural analogue of BSDEs when information arrives at discrete dates. Certainly, BSDEs and BS\(\Delta\)Es are totally different systems. For instance, the time intervals of discrete-time equations cannot tend to $0$, making classical analysis methods such as the It\^{o} formula no longer applicable. BS\(\Delta\)Es are useful not only as numerical analogues of continuous-time BSDEs, but also as intrinsic models for multi-period finance, dynamic risk measurement, recursive utility and control under sequential information updates. Cohen and Elliott \cite{CohenElliott2010} developed a finite-state BS\(\Delta\)E theory including existence, uniqueness, comparison and nonlinear expectation. Their subsequent work \cite{CohenElliott2011} considered more general discrete-time state spaces and nearly time-consistent nonlinear expectations. In mathematical finance, BS\(\Delta\)E techniques also support discrete-time conic finance and arbitrage-free bid--ask valuation; see Bielecki, Cialenco and Chen \cite{BieleckiCialencoChen2015}.

The infinite-horizon setting of BS\(\Delta\)Es presents additional difficulties compared with the finite-horizon case. The discussion of infinite horizon BS\(\Delta\)Es can be traced back to Allan and Cohen \cite{AllanCohen2016}, in which the ergodic BS\(\Delta\)Es (driven by finite-state Markov chains) with solutions defined on an arbitrarily large finite
time interval and its application to ergodic control are studied. 
Notice that on finite horizon, a BS\(\Delta\)E can typically be solved by backward induction from a prescribed terminal condition. But on infinite horizon, the absence of a prescribed terminal time prevents the direct use of backward induction, and instead the long-time behavior of the solution can be controlled in suitable exponentially weighted spaces. Weighted square-summability then provides the estimates needed for existence, uniqueness, and stability, and later ensures the vanishing of the boundary terms arising in the duality argument for the maximum principle. Based on this idea, Ji and Zhang \cite{JiZhang2024} developed a method for infinite-horizon BS\(\Delta\)E with solution defined in the whole infinite time
interval, and applied it to stochastic recursive control problems.

Fully coupled forward-backward stochastic difference equations (FBS\(\Delta\)Es) are more challenging to analyze. The forward state may depend on the backward recursive value and its martingale sensitivity, while the backward recursion depends on the future forward state. Fully coupled linear FBS\(\Delta\)Es and their applications to stochastic control were studied by Xu, Xie and Zhang \cite{XuXieZhang2017} and Xu, Zhang and Xie \cite{XuZhangXie2018}. 
Fully coupled nonlinear FBS\(\Delta\)Es have also received increasing attention. Niu, Meng, Li and Tang \cite{NiuMengLiTang2026} established finite-horizon solvability of nonlinear fully coupled FBS\(\Delta\)Es under domination--monotonicity conditions and the continuation method, with applications to LQ control.
For the infinite-horizon case, Ma, Li and Meng \cite{MaLiMeng2025} studied another class of fully coupled nonlinear FBS\(\Delta\)Es, in which the coefficients depend on backward variables defined through conditional expectations of future quantities, while the \(z\)-component is directly related to the future \(y\)-component through the martingale difference. Under domination--monotonicity conditions, \cite{MaLiMeng2025} established solvability by the continuation method and subsequently considered specific forward and backward LQ optimal control problems, but does not establish a stochastic maximum principle. 
For relevant control problems, Ji and Liu \cite{JiLiuFBSDeltaE} studied finite-horizon stochastic maximum principles for forward--backward stochastic difference systems, while Niu, Moon and Meng \cite{NiuMoonMeng2026} derived necessary and sufficient optimality conditions, via a Pontryagin-type maximum principle, for finite-horizon controlled fully coupled nonlinear FBS\(\Delta\)Es under a generalized monotonicity framework. 

The present paper develops an explicit spectral-contraction approach to
fully coupled nonlinear FBS\(\Delta\)Es on infinite horizon and applies
the resulting solvability theory to stochastic control. We consider systems
of the form
\begin{equation}\label{eq:fbsde}
	\left\{
	\begin{aligned}
		&X_{t+1}
		=X_t+b(t,X_t,Y_t,Z_t)
		+\sigma(t,X_t,Y_t,Z_t)\Delta W_t,\\
		&Y_t+Z_t\Delta W_t+\Delta N_t
		=e^{-\lambda}\bigl(
		Y_{t+1}
		+f(t+1,X_{t+1},Y_{t+1},Z_{t+1})
		\bigr),\\
		&X_0=x_0\in\mathbb{R}^m,
	\end{aligned}
	\right.
\end{equation}
where \(W\) is a square-integrable martingale with normalized conditional
covariance increments and \(N\) is a square-integrable martingale strongly
orthogonal to \(W\).
In contrast to approaches based
on domination--monotonicity and continuation arguments, our method uses
componentwise Lipschitz estimates and an explicit spectral contraction in
exponentially weighted spaces.

The backward equation is first constructed through finite-horizon approximation
combined with an a priori stability estimate. For the fully coupled system,
the solution is formulated in the mixed weighted class
\(
L^{2,\lambda}\times S^{2,\lambda}\times S^{2,\lambda},
\)
and the a priori estimates further yield the stronger
\(L^{2,\lambda}\)-summability of the backward components. The forward and
backward estimates are assembled into an explicit two-dimensional nonnegative
coupling matrix. Its spectral radius gives a computable sufficient discount
threshold, and a corresponding equivalent weighted product norm is constructed
to establish the contraction property. The same analysis also provides a
nonhomogeneous stability estimate, which is subsequently used to control state
perturbations and the variational equations arising in the control problem.

We then study a discrete-time infinite-horizon stochastic control problem governed by a
controlled fully coupled FBS\(\Delta\)E, with convex control constraints and an
accumulated discounted running cost satisfying quadratic-growth conditions.
The adjoint equation is itself a linear fully coupled FBS\(\Delta\)E whose
coefficients are formed from the first-order derivatives of the state
coefficients along an optimal trajectory. Although these derivatives enter the
state and adjoint systems in different forward--backward arrangements, there
exists a common sufficient discount threshold, above which both the controlled
state equation and the induced adjoint equation are well posed in the weighted
spaces.

Compared with the finite-horizon case, the infinite-horizon analysis requires
additional weighted estimates to justify the convergence of the perturbation
remainders and the limiting duality argument. In particular, a
main difficulty in deriving the stochastic maximum principle is to justify
the first-order approximation of the fully coupled nonlinear state system over
the whole infinite horizon.
To overcome this difficulty, we establish a controlled-convergence lemma
tailored to the discounted infinite-horizon setting. This lemma turns the
convergence of the perturbed trajectories into the convergence of the
nonlinear terms appearing in the first-order expansion, and yields the first-order approximation
of the state system and the corresponding expansion of the cost functional based on the
nonhomogeneous stability estimate.
This provides the key analytical foundation for the duality argument and the
resulting Pontryagin-type stochastic maximum principle. Under joint convexity
of the Hamiltonian in the state and control variables, we further prove a
verification theorem.

The rest of this paper is organized as follows. In \cref{sec:bsde}, we study infinite-horizon BS\(\Delta\)Es based on a mixed
weighted-space construction. In \cref{sec:fbsde}, we prove the solvability and a nonhomogeneous stability estimate of infinite-horizon fully coupled FBS\(\Delta\)Es. In \cref{sec:smp}, for the control problem defined by controlled fully coupled FBS\(\Delta\)Es with accumulated discounted cost, we derive the maximum principle and prove the verification theorem. In \cref{sec:example},  a recursive risk-adjusted portfolio example is demonstrated, for which the state and
adjoint discount thresholds are verified and an adjoint-based projection
characterization under convex portfolio constraints is given.

\section{Solvability of BS\texorpdfstring{\(\Delta\)}{Delta}E on Infinite Horizon}\label{sec:bsde}

The main result of this section is the solvability and an a priori stability estimate of infinite-horizon BS\(\Delta\)Es.  
Let \((\Omega,\F,\{\F_t\}_{t\in\Zp},\mathbb P)\) be a filtered probability space satisfying the usual completeness convention. Let \(W=\{W_t\}_{t\in\Zp}\) be an \(\R^d\)-valued square-integrable martingale. We write
\[
\Delta W_t:=W_{t+1}-W_t,
\]
and assume
\[
\E[\Delta W_t\mid\F_t]=0,\qquad
\E[\Delta W_t(\Delta W_t)^\top\mid\F_t]=I_d.
\]
No independent-increment assumption is imposed. The analysis below uses only the martingale-difference condition, the normalized conditional covariance, and the strong orthogonality of the residual martingales.

All processes are assumed to be adapted unless otherwise specified. Throughout the paper, every square-integrable martingale strongly orthogonal to \(W\) is normalized to start from zero. In particular,
\[
N_0=Q_0=R_0=0.
\]
Under this convention, uniqueness of a strongly orthogonal martingale component means uniqueness of the entire martingale process; without the normalization, only its increments would be determined.

\begin{definition}[Weighted spaces]\label{def:spaces}
For \(\lambda>0\), \(T\in\Zp\cup\{\infty\}\), and \(q\in\mathbb N\), let \(S^{2,\lambda}_{\F}(0,T;{\R}^q)\) be the space of \(\R^q\)-valued adapted processes \(X=\{X_t\}_{0\leq t\leq T}\) such that
\[
\norm{X}_{S^{2,\lambda}_{\F}(0,T;{\R}^q)}
:=
\sup_{0\leq t\leq T}
\left(\E[e^{-\lambda t}|X_t|^2]\right)^{1/2}<\infty.
\]
Let \(L^{2,\lambda}_{\F}(0,T;\R^q)\) be the space of \(\R^q\)-valued adapted processes \(X=\{X_t\}_{0\leq t\leq T}\) such that
\[
\norm{X}_{L^{2,\lambda}_{\F}(0,T;\R^q)}
:=
\left(\sum_{t=0}^{T}\E[e^{-\lambda t}|X_t|^2]\right)^{1/2}<\infty.
\]
Let $\mathcal M_{\perp,0}^{2,\lambda}(W;\R^q)$ be the space of \(\R^q\)-valued adapted processes \(X=\{X_t\}_{t\geq0}\) such that
$X\in S_{\F}^{2,\lambda}(0,\infty;\R^q)$, $X_0=0$, $\Delta X=\{\Delta X_t\}_{t\geq0}\in L_{\F}^{2,\lambda}(0,\infty;\R^q)$, and $X$ is strongly orthogonal to \(W\),
namely 
$\E[\Delta X_t\mid\F_t]=0$
and
$\E[\Delta X_t(\Delta W_t)^\top\mid\F_t]=0$,
$\mathbb{P}$-{\rm a.s.}  for any$t\geq0$.

Throughout the paper, $|\cdot|$ denotes the Frobenius norm
(which coincides with the Euclidean norm for vectors), while
$\|\cdot\|_{\mathrm{op}}$ denotes the induced operator norm for linear maps with respect to these norms.
\end{definition}

For brevity, whenever the state space is clear, we write
\[
\|X\|_\lambda^2:=\sum_{t=0}^{\infty}\E[e^{-\lambda t}|X_t|^2].
\]
The same convention is used by summing the corresponding weighted norms.
Notice on the infinite horizon,
\[
L^{2,\lambda}_{\F}(0,\infty;\R^q)\subset S^{2,\lambda}_{\F}(0,\infty;\R^q).
\]
These two spaces play different roles. Backward components are first considered in the natural pointwise weighted class \(S^{2,\lambda}\), which is sufficient for the pointwise conditional estimates and uniqueness arguments. The a priori estimates then show that these same components actually belong to \(L^{2,\lambda}\), which is the space used for infinite-horizon summability, stability estimates and controlled-convergence arguments.

We first consider the infinite-horizon BS\(\Delta\)E
\begin{equation}\label{eq:bsde}
Y_t+Z_t\Delta W_t+\Delta N_t
=e^{-\lambda}\left(Y_{t+1}+f(t+1,Y_{t+1},Z_{t+1})\right),
\qquad t\geq0,
\end{equation}
where for $t\geq0$, \(Y_t\in\R^n\), \(Z_t\in\R^{n\times d}\), and \(N\) is an \(\R^n\)-valued square-integrable martingale strongly orthogonal to \(W\).

\begin{assumption}[Measurable and Lipschitz condition]\label{ass:bsde-gen}
The function
\[
f:\Omega\times\Zp\times\R^n\times\R^{n\times d}\to\R^n
\]
is jointly measurable and adapted.
There exist nonnegative constants \(c_1,c_2\) such that, for any
\(t\in\Zp\) and \(\mathbb P\)-{\rm a.s.} \(\omega\in\Omega\),
the following inequality holds for all
\(y_1,y_2\in\mathbb{R}^n\) and
\(z_1,z_2\in\mathbb{R}^{n\times d}\):
\[
|f(t,y_1,z_1)-f(t,y_2,z_2)|
\leq
c_1|y_1-y_2|+c_2|z_1-z_2|.
\]
\end{assumption}

\begin{assumption}[Discount condition]\label{ass:bsde-discount}
The constants \(\lambda,c_1,c_2\) satisfy
\begin{equation*}\label{eq:bsde-discount}
e^\lambda>(1+c_1)^2+c_2^2.
\end{equation*}
\end{assumption}

\begin{assumption}[Integrability at the origin]\label{ass:bsde-origin}
\[
\sum_{t=0}^{\infty} \E\left[e^{-\lambda t}|f(t,0,0)|^2\right]<\infty.
\]
\end{assumption}
\begin{definition}[Natural weighted solution class]
\label{def:bsde-solution-class}
For \(\lambda >0\),
an adapted triple \((Y,Z,N)\) is called a solution to the
infinite-horizon BS\(\Delta\)E \eqref{eq:bsde}
(in the natural weighted solution class) if
\[
(Y,Z,N)\in
S_{\F}^{2,\lambda}(0,\infty;\R^n)
\times
S_{\F}^{2,\lambda}(0,\infty;\R^{n\times d})
\times
\mathcal M_{\perp,0}^{2,\lambda}(W;\R^n),
\]
and the equation holds \(\mathbb{P}\)-{\rm a.s.} for any \(t\geq0\).
\end{definition}

\begin{lemma}[A priori estimate]\label{lem:bsde-estimate}
Suppose  that \(g\) satisfies Assumptions~\ref{ass:bsde-gen} and \ref{ass:bsde-discount}. Let \(\phi,\phi'\in L^{2,\lambda}_{\F}(0,\infty;\R^n)\). If \((Y,Z,N)\) and \((Y',Z',N')\) are adapted solutions to
\[
Y_t+Z_t\Delta W_t+\Delta N_t
=e^{-\lambda}\left(Y_{t+1}+g(t+1,Y_{t+1},Z_{t+1})+\phi_{t+1}\right)
\]
and to the same equation with \(\phi\) replaced by \(\phi'\), respectively, then there exist constants \(\gamma\in(0,1)\) and \(C>0\), depending only on \(c_1,c_2,\lambda\), such that, for any \(t\ge0\),
\begin{align*}
&\E\left[e^{-\lambda t}\bigl(|\delta Y_t|^2+|\delta Z_t|^2+|\Delta\delta N_t|^2\bigr)\right] \\
&\qquad\leq
\gamma\E\left[e^{-\lambda(t+1)}\bigl(|\delta Y_{t+1}|^2+|\delta Z_{t+1}|^2\bigr)\right]
+C\E\left[e^{-\lambda(t+1)}|\delta\phi_{t+1}|^2\right],
\end{align*}
where \(\delta Y=Y-Y'\), \(\delta Z=Z-Z'\), \(\Delta\delta N=\Delta N-\Delta N'\), and \(\delta\phi=\phi-\phi'\).
\end{lemma}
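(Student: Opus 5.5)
Subtracting the two equations gives the difference equation
\[
\delta Y_t+\delta Z_t\Delta W_t+\Delta\delta N_t=e^{-\lambda}\bigl(\delta Y_{t+1}+\delta g_{t+1}+\delta\phi_{t+1}\bigr),
\qquad
\delta g_{t+1}:=g(t+1,Y_{t+1},Z_{t+1})-g(t+1,Y'_{t+1},Z'_{t+1}).
\]
Write $\xi_{t+1}$ for the right-hand side. The plan is to take conditional second moments given $\F_t$ and use orthogonality on the left-hand side. The terms $\delta Y_t$ (which is $\F_t$-measurable), $\delta Z_t\Delta W_t$ and $\Delta\delta N_t$ are mutually orthogonal in conditional $L^2$. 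Indeed, $\E[\Delta W_t\mid\F_t]=0$ and $\E[\Delta\delta N_t\mid\F_t]=0$, and strong orthogonality gives $\E[\Delta\delta N_t(\Delta W_t)^\top\mid\F_t]=0$. Also $\E[|\delta Z_t\Delta W_t|^2\mid\F_t]=\mathrm{tr}(\delta Z_t\,\E[\Delta W_t\Delta W_t^\top\mid\F_t]\,\delta Z_t^\top)=|\delta Z_t|^2$. Hence
\[
|\delta Y_t|^2+|\delta Z_t|^2+\E[|\Delta\delta N_t|^2\mid\F_t]=\E[|\xi_{t+1}|^2\mid\F_t].
\]
Taking expectations and multiplying by $e^{-\lambda t}$ gives
\[
\E\bigl[e^{-\lambda t}(|\delta Y_t|^2+|\delta Z_t|^2+|\Delta\delta N_t|^2)\bigr]=e^{-\lambda t}e^{-2\lambda}\,\E\bigl|\delta Y_{t+1}+\delta g_{t+1}+\delta\phi_{t+1}\bigr|^2 .
\]
Both sides are finite, since solutions lie in $S^{2,\lambda}$ (or at least have finite second moments pointwise) and $\phi,\phi'\in L^{2,\lambda}$. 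If only adaptedness is assumed, I will note that the identity still holds in $[0,\infty]$, and the estimate is trivially true when the right side is infinite.

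Next I bound the right-hand side. By Assumption~\ref{ass:bsde-gen}, $|\delta Y_{t+1}+\delta g_{t+1}|\le (1+c_1)|\delta Y_{t+1}|+c_2|\delta Z_{t+1}|$. Cauchy--Schwarz in $\R^2$ then gives
\[
|\delta Y_{t+1}+\delta g_{t+1}|^2\le \kappa\,(|\delta Y_{t+1}|^2+|\delta Z_{t+1}|^2),\qquad \kappa:=(1+c_1)^2+c_2^2 .
\]
To absorb $\delta\phi$, I use Young's inequality $|a+b|^2\le(1+\varepsilon)|a|^2+(1+\varepsilon^{-1})|b|^2$ with $\varepsilon>0$ to be chosen. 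This yields
\[
\text{RHS}\le e^{-\lambda(t+1)}e^{-\lambda}(1+\varepsilon)\kappa\,\E\bigl[|\delta Y_{t+1}|^2+|\delta Z_{t+1}|^2\bigr]+e^{-\lambda(t+1)}e^{-\lambda}(1+\varepsilon^{-1})\,\E|\delta\phi_{t+1}|^2 .
\]

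Finally I choose the constants. Assumption~\ref{ass:bsde-discount} says exactly that $\kappa e^{-\lambda}<1$. So I pick $\varepsilon>0$ with $\gamma:=(1+\varepsilon)\kappa e^{-\lambda}<1$, for example $1+\varepsilon=\tfrac12(1+e^{\lambda}/\kappa)$ when $\kappa>0$, and any $\varepsilon$ if $\kappa=0$. Then I set $C:=e^{-\lambda}(1+\varepsilon^{-1})$. Both constants depend only on $c_1,c_2,\lambda$. If $\gamma=0$, I replace it by any positive number below $1$ to keep $\gamma\in(0,1)$.

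The only delicate step is the conditional orthogonality decomposition. It needs care with integrability: the cross terms must have zero conditional expectation, which requires the products to be integrable. I would justify this from the square integrability of each term, using conditional Cauchy--Schwarz. The rest is routine.
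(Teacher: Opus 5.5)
Your proposal is correct and follows the paper's proof essentially step for step. Both arguments use the conditional isometry coming from the orthogonality of $\delta Y_t$, $\delta Z_t\Delta W_t$ and $\Delta\delta N_t$, the Lipschitz/Cauchy--Schwarz bound with $\kappa=(1+c_1)^2+c_2^2$, Young's inequality, and the choice $\gamma=e^{-\lambda}(1+\varepsilon)\kappa<1$, $C=e^{-\lambda}(1+\varepsilon^{-1})$; your extra remarks on integrability go slightly beyond what the paper writes, and your edge cases $\kappa=0$ and $\gamma=0$ never occur since $\kappa\ge 1$.
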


\begin{proof}
Subtracting these two equations gives
\[
\delta Y_t+\delta Z_t\Delta W_t+\Delta\delta N_t
=e^{-\lambda}\left(\delta Y_{t+1}+\delta g_{t+1}+\delta\phi_{t+1}\right),
\]
where
\[
\delta g_{t+1}:=g(t+1,Y_{t+1},Z_{t+1})-g(t+1,Y'_{t+1},Z'_{t+1}).
\]
The left-hand side consists of an \(\mathcal F_t\)-measurable term, a stochastic integral with respect to \(\Delta W_t\), and an increment of a martingale strongly orthogonal to \(W\). Hence taking the conditional expectation with respect to \(\mathcal F_t\), we have
\[
\E\bigl[\langle \delta Y_t,\delta Z_t\Delta W_t\rangle\mid\mathcal F_t\bigr]=0,
\qquad
\E\bigl[\langle \delta Y_t,\Delta\delta N_t\rangle\mid\mathcal F_t\bigr]=0,
\]
\[
\E\bigl[\langle \delta Z_t\Delta W_t,\Delta\delta N_t\rangle\mid\mathcal F_t\bigr]=0,
\qquad
\E\bigl[|\delta Z_t\Delta W_t|^2\mid\mathcal F_t\bigr]=|\delta Z_t|^2.
\]
Consequently,
\begin{align}
\E\left[e^{-\lambda t}\bigl(|\delta Y_t|^2+|\delta Z_t|^2+|\Delta\delta N_t|^2\bigr)\right]=
 e^{-\lambda}\E\left[e^{-\lambda(t+1)}
\left|\delta Y_{t+1}+\delta g_{t+1}+\delta\phi_{t+1}\right|^2\right].
\label{eq:bsde-estimate-isometry-detailed}
\end{align}
Set
\[
a:=1+c_1,\qquad b:=c_2,
\qquad \rho_0:=a^2+b^2.
\]
Since \(g\) is Lipschitz,
\[
|\delta Y_{t+1}+\delta g_{t+1}|
\leq a|\delta Y_{t+1}|+b|\delta Z_{t+1}|.
\]
By the Cauchy--Schwarz inequality in \(\R^2\),
\begin{equation}\label{eq:bsde-rho0-bound}
|\delta Y_{t+1}+\delta g_{t+1}|^2
\leq
\rho_0\bigl(|\delta Y_{t+1}|^2+|\delta Z_{t+1}|^2\bigr).
\end{equation}
Due to Assumption~\ref{ass:bsde-discount}, \(e^{-\lambda}\rho_0<1\). Choose \(\varepsilon>0\) sufficiently small such that
\[
\gamma:=e^{-\lambda}(1+\varepsilon)\rho_0<1.
\]
Using
\[
|A+B|^2\leq (1+\varepsilon)|A|^2+\left(1+\frac1\varepsilon\right)|B|^2,
\]
with \(A=\delta Y_{t+1}+\delta g_{t+1}\) and \(B=\delta\phi_{t+1}\), and then applying \eqref{eq:bsde-rho0-bound}, we obtain
\[
\left|\delta Y_{t+1}+\delta g_{t+1}+\delta\phi_{t+1}\right|^2
\leq
(1+\varepsilon)\rho_0\bigl(|\delta Y_{t+1}|^2+|\delta Z_{t+1}|^2\bigr)
+C_\varepsilon |\delta\phi_{t+1}|^2.
\]
Combining the above inequality and \eqref{eq:bsde-estimate-isometry-detailed}, we have the desired estimate with \(C=e^{-\lambda}C_\varepsilon\).
\end{proof}

\begin{theorem}[Solvability of the infinite-horizon BS\(\Delta\)E]\label{thm:bsde-solvability}
Under Assumptions~\ref{ass:bsde-gen}--\ref{ass:bsde-origin}, \cref{eq:bsde} admits a unique solution.
Moreover, the solution satisfies the stronger summability property
\[
(Y,Z,\Delta N)\in
L^{2,\lambda}_{\F}(0,\infty;\R^n)
\times
L^{2,\lambda}_{\F}(0,\infty;\R^{n\times d})
\times
L^{2,\lambda}_{\F}(0,\infty;\R^n).
\]
\end{theorem}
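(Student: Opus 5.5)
We follow the route the paper announces: construct the solution as a limit of finite-horizon approximations, and control everything with \cref{lem:bsde-estimate}.

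\textbf{Step 1: truncated equations.} For each \(T\in\mathbb N\), let \((Y^T,Z^T,N^T)\) be the solution of \eqref{eq:bsde} on \(\{0,\dots,T\}\) with \(Y^T_t=Z^T_t=0\) and \(\Delta N^T_{t-1}=0\) for \(t>T\). On the finite horizon, backward induction works as follows. Given \(Y^T_{t+1},Z^T_{t+1}\in L^2\), set
\[
\xi_{t+1}:=e^{-\lambda}\bigl(Y^T_{t+1}+f(t+1,Y^T_{t+1},Z^T_{t+1})\bigr).
\]
Then define \(Y^T_t:=\E[\xi_{t+1}\mid\F_t]\) and \(Z^T_t:=\E[\xi_{t+1}(\Delta W_t)^\top\mid\F_t]\). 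This uses the normalization \(\E[\Delta W_t\Delta W_t^\top\mid\F_t]=I_d\). Finally put \(\Delta N^T_t:=\xi_{t+1}-Y^T_t-Z^T_t\Delta W_t\). By construction \(\Delta N^T_t\) is a conditional martingale difference orthogonal to \(\Delta W_t\), and we set \(N^T_0=0\). Square integrability at each step follows from the Lipschitz bound and \cref{ass:bsde-origin}.

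At times \(t\ge T\) the truncated triple solves the same equation with the source term replaced by \(\phi^T_{t+1}:=-f(t+1,0,0)\mathbf 1_{t\geq T}\), once we write \(g(t,y,z):=f(t,y,z)-f(t,0,0)\) and \(\phi_{t}:=f(t,0,0)\) in \cref{lem:bsde-estimate}. More precisely, regard \((Y^T,Z^T,N^T)\) as solving
\[
Y_t+Z_t\Delta W_t+\Delta N_t=e^{-\lambda}\bigl(Y_{t+1}+g(t+1,Y_{t+1},Z_{t+1})+\phi_{t+1}\mathbf 1_{t+1\le T}\bigr).
\]
This holds for \(t\ge T\) because \(g(\cdot,0,0)=0\).

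\textbf{Step 2: Cauchy estimate.} For \(T<T'\), apply \cref{lem:bsde-estimate} to the two truncations. Write
\[
a_t:=\E\bigl[e^{-\lambda t}(|\delta Y_t|^2+|\delta Z_t|^2+|\Delta\delta N_t|^2)\bigr].
\]
The lemma gives \(a_t\le\gamma a_{t+1}+C\beta_{t+1}\), where \(\beta_s:=\E[e^{-\lambda s}|f(s,0,0)|^2]\mathbf 1_{T<s\le T'}\), and \(a_t=0\) for \(t\ge T'\). Iterating backward yields \(a_t\le C\sum_{s>t}\gamma^{s-t-1}\beta_s\). Summing over \(t\) and using \(\gamma<1\) gives
\[
\sum_t a_t\le \frac{C}{1-\gamma}\sum_{s>T}\E\bigl[e^{-\lambda s}|f(s,0,0)|^2\bigr]\to0
\]
as \(T\to\infty\). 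Hence \((Y^T,Z^T,\Delta N^T)\) is Cauchy in \(L^{2,\lambda}\). The same iteration applied against the zero solution (with \(T=0\)) bounds the norms uniformly.

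\textbf{Step 3: passing to the limit.} Let \((Y,Z,\Delta N)\) be the \(L^{2,\lambda}\) limit, and define \(N_t:=\sum_{s<t}\Delta N_s\). Each fixed-time equation passes to the limit in \(L^2\), since \(f\) is Lipschitz. Adaptedness also passes to the limit, and so do the relations \(\E[\Delta N_t\mid\F_t]=0\) and \(\E[\Delta N_t\Delta W_t^\top\mid\F_t]=0\), because conditional expectations are \(L^2\)-continuous. For \(N\in S^{2,\lambda}\), orthogonality of the increments gives \(\E|N_t|^2=\sum_{s<t}\E|\Delta N_s|^2\). Hence \(e^{-\lambda t}\E|N_t|^2\le\sum_{s<t}e^{-\lambda s}\E|\Delta N_s|^2\), which is bounded. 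Finally, \(L^{2,\lambda}\subset S^{2,\lambda}\). This gives existence together with the stronger summability.

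\textbf{Step 4: uniqueness (expected main obstacle).} Uniqueness must hold in the weaker class \(S^{2,\lambda}\), where only pointwise bounds are available. Take two solutions and apply \cref{lem:bsde-estimate} with \(\delta\phi=0\). With \(u_t:=\E[e^{-\lambda t}(|\delta Y_t|^2+|\delta Z_t|^2)]\), this gives \(u_t\le\gamma u_{t+1}\). Iterating, \(u_t\le\gamma^k u_{t+k}\le\gamma^k\sup_s u_s\to0\), and \(\sup_s u_s<\infty\) by the \(S^{2,\lambda}\) assumption. So \(\delta Y=\delta Z=0\). The lemma then forces \(\Delta\delta N_t=0\), and the normalization \(N_0=N'_0=0\) gives \(N=N'\).

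The delicate point is exactly that this argument uses only the supremum bound, which is why the lemma is stated pointwise with contraction factor \(\gamma<1\). The other point needing care is checking, in Step 1, that the truncation fits the lemma's form with \(\phi\in L^{2,\lambda}\).
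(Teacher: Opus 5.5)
Your proof is correct and follows the paper's argument essentially step for step. The steps are: zero-terminal backward-induction truncations recast with $g=f-f(\cdot,0,0)$ and a truncated source so that \cref{lem:bsde-estimate} applies; a Cauchy estimate in $L^{2,\lambda}$, where you iterate and sum a geometric series while the paper sums and absorbs $\gamma\sum A_t$, which is equivalent; passage to the limit; and uniqueness by iterating $u_t\le\gamma u_{t+1}$ against the $S^{2,\lambda}$ bound. The only slip is an off-by-one in Step 1. For the recast equation with source $\phi_{t+1}\mathbf 1_{t+1\le T}$ to hold at $t=T$, and for $a_t=0$ when $t\ge T'$, you must impose the zero terminal condition at time $T$ itself, i.e.\ $Y^T_t=Z^T_t=0$ for $t\ge T$. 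With zeros only for $t>T$, the step $t=T$ gives $Y^T_T=e^{-\lambda}\E[f(T+1,0,0)\mid\F_T]$ and $\Delta N^T_T\neq0$. This is trivially repaired; the paper instead keeps terminal time $k+1$ and uses the source $\mathbf 1_{\{1,\ldots,k+1\}}(t)f(t,0,0)$.
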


\begin{proof}
Set
\[
g(t,y,z)=f(t,y,z)-f(t,0,0).
\]
Then \(g(t,0,0)=0\), and \(g\) has the same Lipschitz constants \(c_1,c_2\) as \(f\). For each \(k\in\mathbb N\), define
\[
\phi_t^k={\bf 1}_{\{1,\ldots,k+1\}}(t)f(t,0,0).
\]
For $k\in\mathbb{N}$, we first construct an approximation by a sequence of finite-horizon BS\(\Delta\)Es
\begin{equation}\label{cz1}
Y_t^k+Z_t^k\Delta W_t+\Delta N_t^k
=e^{-\lambda}\left(Y_{t+1}^k+g(t+1,Y_{t+1}^k,Z_{t+1}^k)+\phi_{t+1}^k\right),
\qquad 0\leq t\leq k,
\end{equation}
with terminal time $k+1$. Let \(Y_{k+1}^k=0\) and \(Z_{k+1}^k=0\). Suppose that we have known \((Y_{t+1}^k,Z_{t+1}^k)\), $0\leq t\leq k$. Define
\[
\Xi_{t+1}^k
:=e^{-\lambda}\left(Y_{t+1}^k+g(t+1,Y_{t+1}^k,Z_{t+1}^k)+\phi_{t+1}^k\right).
\]
Since \(\Xi_{t+1}^k\in L^2(\mathcal F_{t+1};\R^n)\), set
\[
Y_t^k:=\E[\Xi_{t+1}^k\mid\mathcal F_t],
\qquad
Z_t^k:=\E[\Xi_{t+1}^k(\Delta W_t)^\top\mid\mathcal F_t].
\]
The conditional Cauchy--Schwarz inequality and the covariance normalization give
\[
|Z_t^k|^2
\leq
\E[|\Xi_{t+1}^k|^2\mid\mathcal F_t]
\cdot\E[|\Delta W_t|^2\mid\mathcal F_t]
=d\,\E[|\Xi_{t+1}^k|^2\mid\mathcal F_t].
\]
By conditional Jensen's inequality, we also have
\[
\E|Y_t^k|^2
=\E\left|\E[\Xi_{t+1}^k\mid\mathcal F_t]\right|^2
\leq\E|\Xi_{t+1}^k|^2<\infty.
\]
Consequently, \(Y_t^k\in L^2(\mathcal F_t;\R^n)\) and \(Z_t^k\in L^2(\mathcal F_t;\R^{n\times d})\). Moreover,
\[
\E|Z_t^k\Delta W_t|^2
=
\E\!\left[\operatorname{tr}\!\left(
Z_t^k\E[\Delta W_t(\Delta W_t)^\top\mid\mathcal F_t](Z_t^k)^\top
\right)\right]
=
\E|Z_t^k|^2.
\]
It follows that
\[
\Delta N_t^k:=\Xi_{t+1}^k-Y_t^k-Z_t^k\Delta W_t
\]
is square-integrable. Because \(\E[\Delta W_t(\Delta W_t)^\top\mid\mathcal F_t]=I_d\), it satisfies
\[
\E[\Delta N_t^k\mid\mathcal F_t]=0,
\qquad
\E[\Delta N_t^k(\Delta W_t)^\top\mid\mathcal F_t]=0.
\]
Thus \(N^k_t:=\sum_{s=0}^{t-1}\Delta N_s^k\), with \(N^k_0=0\), is a square-integrable martingale strongly orthogonal to \(W\).
Moreover, this backward induction procedure also implies the uniqueness of solution to finite-horizon BS\(\Delta\)E \eqref{cz1} for any $k\in\mathbb{N}$.

Extend the finite-horizon solution to the whole half-line by setting
\(
\Delta N_{k+1}^k=0\), \(Y_t^k=Z_t^k=\Delta N_t^k=0\), \(t\ge k+2\).
Take $M,L\in\mathbb{N}$ and \(M>L\), and set
\[
A_t^{M,L}:=\E\left[e^{-\lambda t}\bigl(|Y_t^M-Y_t^L|^2+|Z_t^M-Z_t^L|^2+|\Delta N_t^M-\Delta N_t^L|^2\bigr)\right],
\]
\[
D_t^{M,L}:=\E\left[e^{-\lambda t}|\phi_t^M-\phi_t^L|^2\right].
\]
for any finite \(K\geq0\), by Lemma~\ref{lem:bsde-estimate} and the inequality
\(\E[e^{-\lambda t}(|\delta Y_t|^2+|\delta Z_t|^2)]\leq A_t^{M,L}\), we have
\[
\sum_{t=0}^{K}A_t^{M,L}
\leq
\gamma\sum_{t=1}^{K+1}A_t^{M,L}
+C\sum_{t=1}^{K+1}D_t^{M,L}.
\]
Consequently,
\[
(1-\gamma)\sum_{t=0}^{K}A_t^{M,L}
\leq
\gamma A_{K+1}^{M,L}+C\sum_{t=1}^{K+1}D_t^{M,L}.
\]
Choose \(K\geq M+1\). The extension of solution implies \(A_{K+1}^{M,L}=0\), and all nonzero terms have already been included in the finite sums. Hence
\begin{align*}
&\sum_{t=0}^{\infty}\E\left[e^{-\lambda t}\bigl(|Y_t^M-Y_t^L|^2+|Z_t^M-Z_t^L|^2+|\Delta N_t^M-\Delta N_t^L|^2\bigr)\right]\\
&\qquad\leq
\frac{C}{1-\gamma}\sum_{t=L+1}^{M+1}\E[e^{-\lambda t}|f(t,0,0)|^2].
\end{align*}
The last term tends to zero as \(L,M\to\infty\) by Assumption~\ref{ass:bsde-origin}. Hence \((Y^k,Z^k,\Delta N^k)\) is Cauchy in
\[
L^{2,\lambda}_{\mathcal F}(0,\infty;\R^n)
\times L^{2,\lambda}_{\mathcal F}(0,\infty;\R^{n\times d})
\times L^{2,\lambda}_{\mathcal F}(0,\infty;\R^n).
\]
Denote its limit by \((Y,Z,\Delta N)\), and set \(N_0=0\),
\[
N_t:=\sum_{s=0}^{t-1}\Delta N_s,
\qquad t\ge1.
\]
Note that the convergence in the weighted norm implies convergence in \(L^2\) for any fixed time \(t\). Hence
\[
f(t+1,Y_{t+1}^k,Z_{t+1}^k)\longrightarrow f(t+1,Y_{t+1},Z_{t+1})
\quad\text{in }L^2\ {\rm as}\ k\to\infty.
\]
due to the Lipschitz property of \(f\).
Taking the limit in the finite-horizon equation, we have BS\(\Delta\)E \eqref{eq:bsde} \(\mathbb P\)-a.s. for any \(t\ge0\). The martingale and strong orthogonality properties also follow from the continuity of conditional expectation. To see this, notice as $k\to\infty$,

\[\E\left|
\E[\Delta N_t^k-\Delta N_t\mid\mathcal F_t]
\right|^2
\leq
\E|\Delta N_t^k-\Delta N_t|^2
\longrightarrow0,
\]
so \(\E[\Delta N_t\mid\mathcal F_t]=0\). Moreover, as $k\to\infty$, we have
\[
\begin{aligned}
\E\big|
\E[\Delta N_t(\Delta W_t)^\top\mid\F_t]
\big|
&=
\E\big|
\E[(\Delta N_t-\Delta N_t^k)(\Delta W_t)^\top\mid\F_t]
\big|\\
&\le
\E\Big[
\E\big[
|(\Delta N_t-\Delta N_t^k)(\Delta W_t)^\top|
\mid\F_t
\big]
\Big]\\
&=
\E\big[
|(\Delta N_t-\Delta N_t^k)(\Delta W_t)^\top|
\big]\\
&=
\E\big[
|\Delta N_t-\Delta N_t^k|\,|\Delta W_t|
\big]\\
&\le
\big(\E|\Delta N_t-\Delta N_t^k|^2\big)^{1/2}
\big(\E|\Delta W_t|^2\big)^{1/2}
\longrightarrow0.
\end{aligned}
\]
Hence
\[
\E\!\left[\Delta N_t(\Delta W_t)^\top\mid\F_t\right]=0,
\qquad \mathbb P-{\rm a.s.}
\]
Moreover, since \(N\) is a square-integrable martingale with \(N_0=0\),
\[
\sup_{t\ge0}e^{-\lambda t}\E|N_t|^2
\leq
\sum_{s=0}^{\infty}e^{-\lambda s}\E|\Delta N_s|^2
=\|\Delta N\|_\lambda^2<\infty.
\]
Hence \(N\in S_{\F}^{2,\lambda}\), and consequently
\(N\in\mathcal M_{\perp,0}^{2,\lambda}(W;\R^n)\).
Thus \((Y,Z,N)\) is a solution. Moreover,
\[
(Y,Z,\Delta N)
\in
L^{2,\lambda}_{\F}\times L^{2,\lambda}_{\F}\times L^{2,\lambda}_{\F}.
\]


It remains to prove uniqueness in the natural \(S^{2,\lambda}\)-class. Let \((Y,Z,N)\) and \((Y',Z',N')\) be two solutions with \((Y,Z),(Y',Z')\in S^{2,\lambda}_{\F}\times S^{2,\lambda}_{\F}\). Lemma~\ref{lem:bsde-estimate} with \(\delta\phi=0\) gives
\[
A_t\leq \gamma B_{t+1},\ \ \ t\geq0,
\]
where
\[
A_t:=\E\left[e^{-\lambda t}\bigl(|\delta Y_t|^2+|\delta Z_t|^2+|\Delta\delta N_t|^2\bigr)\right],
\qquad
B_t:=\E\left[e^{-\lambda t}\bigl(|\delta Y_t|^2+|\delta Z_t|^2\bigr)\right].
\]
Since \(B_t\le A_t\) for $t\geq0$, by iteration we have for any \(k\ge1\),
\[
B_t\leq A_t\leq \gamma B_{t+1}\leq \gamma A_{t+1}\leq\cdots\leq \gamma^k B_{t+k}.
\]
The \(S^{2,\lambda}\)-assumption implies \(\sup_{s\ge0}B_s<\infty\). Taking \(k\to\infty\), we have \(B_t=0\) for any \(t\geq0\). The inequality \(A_t\le \gamma B_{t+1}\) then also implies \(A_t=0\) for any \(t\geq0\). Hence \(Y=Y'\), \(Z=Z'\), and \(\Delta N=\Delta N'\) for any time step. The proof is complete.
\end{proof}

\section{Solvability of Infinite-Horizon Fully Coupled FBS\texorpdfstring{\(\Delta\)}{Delta}E}\label{sec:fbsde}


In this section, we study fully coupled FBS\texorpdfstring{\(\Delta\)}{Delta}E \eqref{eq:fbsde},
in which \(X_t\in\R^m\), \(Y_t\in\R^n\), \(Z_t\in\R^{n\times d}\), \(b\) takes values in \(\R^m\), \(\sigma\) takes values in \(\R^{m\times d}\), and \(f\) takes values in \(\R^n\).

\begin{assumption}[Measurable and Lipschitz condition]\label{ass:fbsde-coeff}
The coefficients
\[
(b,\sigma,f):\Omega\times\Zp\times\R^m\times\R^n\times\R^{n\times d}\to\R^m\times\R^{m\times d}\times\R^n
\]
are jointly measurable and adapted.
There exist nonnegative constants
\[
L_b^x,L_b^y,L_b^z,\qquad
L_\sigma^x,L_\sigma^y,L_\sigma^z,\qquad
L_f^x,L_f^y,L_f^z
\]
such that, for any \(t\in\Zp\) and
\(\mathbb P\)-{\rm a.s.} \(\omega\in\Omega\), the following inequalities
hold for all \(x_1,x_2\in\mathbb{R}^m\),
\(y_1,y_2\in\mathbb{R}^n\), and
\(z_1,z_2\in\mathbb{R}^{n\times d}\):
\begin{align*}
&|b(t,x_1,y_1,z_1)-b(t,x_2,y_2,z_2)|\leq
L_b^x|x_1-x_2|+L_b^y|y_1-y_2|+L_b^z|z_1-z_2|,\\
&|\sigma(t,x_1,y_1,z_1)-\sigma(t,x_2,y_2,z_2)|\leq
L_\sigma^x|x_1-x_2|+L_\sigma^y|y_1-y_2|+L_\sigma^z|z_1-z_2|,\\
&|f(t,x_1,y_1,z_1)-f(t,x_2,y_2,z_2)|\leq
L_f^x|x_1-x_2|+L_f^y|y_1-y_2|+L_f^z|z_1-z_2|.
\end{align*}
\end{assumption}

\begin{assumption}[Discount condition]\label{ass:fbsde-discount}
Set
\begin{align*}
&B_{yz}:=\sqrt{(L_b^y)^2+(L_b^z)^2},
\qquad
\Sigma_{yz}:=\sqrt{(L_\sigma^y)^2+(L_\sigma^z)^2},\\
&\mathcal P_0:=(1+L_b^x)^2+(L_\sigma^x)^2,
\ \ \
\mathcal Q_0:=B_{yz}^2+\Sigma_{yz}^2,
\ \ \ \mathcal K:=(1+L_b^x)B_{yz}+L_\sigma^x\Sigma_{yz},\\
&F_x:=L_f^x,
\qquad
G:=\sqrt{(1+L_f^y)^2+(L_f^z)^2},
\end{align*}
and define
\begin{equation*}\label{eq:calligraphic-PQRS-def}
\mathcal P:=\mathcal P_0+\mathcal K,
\qquad
\mathcal Q:=\mathcal Q_0+\mathcal K,
\qquad
\mathcal R:=F_x^2+F_xG,
\qquad
\mathcal S:=G^2+F_xG.
\end{equation*}
We assume
\begin{equation}\label{eq:fbsde-new-discount}
e^\lambda>
\frac{\mathcal P+\mathcal S+\sqrt{(\mathcal P-\mathcal S)^2+4\mathcal Q\mathcal R}}{2}.
\end{equation}
\end{assumption}

\begin{assumption}[Integrability at the origin]\label{ass:fbsde-origin}
\[
\sum_{t=0}^{\infty}\E\left[e^{-\lambda t}\left(|b(t,0,0,0)|^2+|\sigma(t,0,0,0)|^2+|f(t,0,0,0)|^2\right)\right]<\infty.
\]
\end{assumption}

\begin{remark}\label{rem:discount-condition-interpretation}
The condition \eqref{eq:fbsde-new-discount} is an explicit sufficient condition obtained from the estimates below and the spectral radius of the associated coupling matrix. It may not be optimal. 
\end{remark}
\begin{definition}[Weighted solution class for the infinite-horizon
FBS\(\Delta\)E]
For \(\lambda>0\),
an adapted quadruple \((X,Y,Z,N)\) is called a solution to the
infinite-horizon FBS\(\Delta\)E \eqref{eq:fbsde} if
\[
(X,Y,Z,N)\in
L_{\mathcal F}^{2,\lambda}(0,\infty;\mathbb R^{m})
\times
S_{\mathcal F}^{2,\lambda}(0,\infty;\mathbb R^{n})
\times
S_{\mathcal F}^{2,\lambda}
(0,\infty;\mathbb R^{n\times d})
\times
\mathcal M_{\perp,0}^{2,\lambda}
(W;\mathbb R^{n}),
\]
and the equation holds \(\mathbb{P}\)-{\rm a.s.} for any \(t\geq0\).
\end{definition}

\begin{theorem}[Solvability of the infinite-horizon FBS\(\Delta\)E]\label{thm:fbsde-solvability}
Under Assumptions~\ref{ass:fbsde-coeff}--\ref{ass:fbsde-origin}, \cref{eq:fbsde} admits a unique solution, denoted by $(X,Y,Z,N)$. Moreover, the solution satisfies the stronger summability property
\[
(Y,Z,\Delta N)
\in
L^{2,\lambda}_{\F}(0,\infty;\R^n)
\times
L^{2,\lambda}_{\F}(0,\infty;\R^{n\times d})
\times
L^{2,\lambda}_{\F}(0,\infty;\R^n).
\]
\end{theorem}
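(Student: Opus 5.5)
We construct the solution as the fixed point of a map $\Phi$ on the product space $L^{2,\lambda}_{\F}(0,\infty;\R^m)\times\bigl(L^{2,\lambda}_{\F}(0,\infty;\R^n)\times L^{2,\lambda}_{\F}(0,\infty;\R^{n\times d})\bigr)$, and show that $\Phi$ contracts in an equivalent weighted product norm.

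\textbf{The map.} Given an input pair $(x,(y,z))$, we define:
\begin{enumerate}[label=(\roman*)]
\item $X$ by forward recursion from $X_0=x_0$, namely $X_{t+1}=X_t+b(t,X_t,y_t,z_t)+\sigma(t,X_t,y_t,z_t)\Delta W_t$. The state $X$ is solved forward with the input $(y,z)$ frozen, so only the $x$-input is unused.
\item $(Y,Z,N)$ as the unique solution, given by \cref{thm:bsde-solvability}, of the BS$\Delta$E with generator $f(t,X_t,y,z)$.
\end{enumerate}
For step (ii), $f(t,X_t,\cdot,\cdot)$ has Lipschitz constants $(L_f^y,L_f^z)$. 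The requirement $e^\lambda>G^2$ in Assumption~\ref{ass:bsde-discount} follows from \eqref{eq:fbsde-new-discount}, because the spectral radius there is at least $\mathcal S\ge G^2$. Integrability at the origin follows from Assumption~\ref{ass:fbsde-origin} once $X\in L^{2,\lambda}$.

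\textbf{Forward estimate.} The one-step isometry gives
\[
\E|\delta X_{t+1}|^2=\E|\delta X_t+\delta b_t|^2+\E|\delta\sigma_t|^2 .
\]
Bound each increment by $(1+L_b^x)|\delta X_t|+B_{yz}|(\delta y_t,\delta z_t)|$ (Cauchy--Schwarz on the $y,z$ part), and similarly for $\sigma$. Expanding the squares produces the cross term $2\mathcal K|\delta X||(\delta y,\delta z)|$, which Young's inequality $2ab\le a^2+b^2$ splits into the $\mathcal K$ contributions in $\mathcal P$ and $\mathcal Q$. This yields
\[
\E|\delta X_{t+1}|^2\le \mathcal P\,\E|\delta X_t|^2+\mathcal Q\,\E|(\delta y_t,\delta z_t)|^2 .
\]
Multiply by $e^{-\lambda(t+1)}$, sum over $t$, and use $\delta X_0=0$:
\[
\|\delta X\|_\lambda^2\le e^{-\lambda}\bigl(\mathcal P\|\delta X\|_\lambda^2+\mathcal Q\|(\delta y,\delta z)\|_\lambda^2\bigr).
\]

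\textbf{Backward estimate.} Use the isometry \eqref{eq:bsde-estimate-isometry-detailed} of \cref{lem:bsde-estimate} with
\[
|\delta Y_{t+1}+\delta f_{t+1}|\le F_x|\delta X_{t+1}|+|\delta Y_{t+1}|+|\delta f^{yz}_{t+1}| .
\]
The last two terms are bounded by a Cauchy--Schwarz combination that again involves the frozen inputs. To keep the triangular structure, rewrite the backward equation as $Y_t+\dots=e^{-\lambda}(y_{t+1}+f(t+1,X_{t+1},y_{t+1},z_{t+1}))$, so that the BSDE is just a conditional projection and the bound becomes $F_x|\delta X|+G|(\delta y,\delta z)|$. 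Squaring with Young's inequality on the cross term gives $\mathcal R$ and $\mathcal S$. Summing in weighted form then gives
\[
\|(\delta Y,\delta Z)\|_\lambda^2\le e^{-\lambda}\bigl(\mathcal R\|\delta X\|_\lambda^2+\mathcal S\|(\delta y,\delta z)\|_\lambda^2\bigr).
\]
We reconcile the exact time shift by passing to the pair (forward-variable $X_{t+1}$, backward at $t+1$). With this rewriting, $\Phi$ on $(X,(Y,Z))$ is explicit in both components and no inner BSDE solve is needed.

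\textbf{Contraction.} Combining the two estimates gives, componentwise, the vector inequality
\[
v(\Phi u-\Phi u')\le e^{-\lambda}M\,v(u-u'),\qquad M=\begin{pmatrix}\mathcal P&\mathcal Q\\ \mathcal R&\mathcal S\end{pmatrix},
\]
where $v$ records the squared norms of the two components. The matrix $M$ is nonnegative, with Perron root equal to the right-hand side of \eqref{eq:fbsde-new-discount}. Take a positive left Perron eigenvector $(\alpha,\beta)$ and set
\[
\|u\|_*^2=\alpha\|x\|_\lambda^2+\beta\|(y,z)\|_\lambda^2 .
\]
Then $\Phi$ is a contraction with factor $e^{-\lambda}\rho(M)<1$. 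If $\mathcal Q\mathcal R=0$, the eigenvector may fail to be positive; perturb $M$ slightly upward, which still keeps $e^{-\lambda}\rho<1$. Banach's theorem then gives a unique fixed point in $L^{2,\lambda}$, so $Y,Z\in L^{2,\lambda}$. The isometry gives $\Delta N\in L^{2,\lambda}$, and $N\in\mathcal M^{2,\lambda}_{\perp,0}$ follows exactly as in \cref{thm:bsde-solvability}. Mapping $0$ to a point of the space uses Assumption~\ref{ass:fbsde-origin}.

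\textbf{Uniqueness and main obstacle.} Uniqueness in the larger class with $Y,Z\in S^{2,\lambda}$ is the main obstacle. For two solutions, \cref{lem:bsde-estimate} (with $\delta\phi$ containing $F_x\delta X$) first shows pointwise that the backward differences of a solution with $X\in L^{2,\lambda}$ are automatically in $L^{2,\lambda}$. Concretely, iterate $A_t\le\gamma B_{t+1}+C e^{-\lambda(t+1)}\E|\delta X_{t+1}|^2$, as in the a priori summation, using the boundedness of $B$. Hence both solutions are fixed points in $L^{2,\lambda}$ and coincide. The most delicate part is deriving the Young splitting so that the cross terms produce exactly $\mathcal K$ and $F_xG$, and handling the index shift in the weighted sums. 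The boundary terms there vanish because of the weights and the summability.
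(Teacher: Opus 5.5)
There is a genuine gap. Your contraction step asserts an inequality that the map you actually define does not satisfy.

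The map is defined in two incompatible ways.
\begin{itemize}
\item In (i) the forward recursion uses the \emph{output} $X_t$ (as you say, the $x$-input is unused). So your forward bound is really $\|\delta X\|_\lambda^2\le e^{-\lambda}(\mathcal P\|\delta X\|_\lambda^2+\mathcal Q\|(\delta y,\delta z)\|_\lambda^2)$, with the output difference on both sides.
\item In (ii) you invoke \cref{thm:bsde-solvability} for an implicit BS$\Delta$E. The backward bound, however, is derived for a different, explicit projection step that uses $y_{t+1},z_{t+1}$ but still the output $X_{t+1}$. The sentence about ``reconciling the time shift'' does not define a map.
\end{itemize}
In the notation of the paper's Step~4, what your two estimates give is
\[
U_{\rm out}\le e^{-\lambda}(\mathcal P U_{\rm out}+\mathcal Q V_{\rm in}),\qquad
V_{\rm out}\le e^{-\lambda}(\mathcal R U_{\rm out}+\mathcal S V_{\rm in}).
\]
Here the first column of $M$ multiplies an output, not an input difference, so $v(\Phi u-\Phi u')\le e^{-\lambda}M\,v(u-u')$ does not follow. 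It is in fact false for your map. For inputs differing only in $y$, its first row claims $U_{\rm out}\le e^{-\lambda}\mathcal Q V_{\rm in}$. Take the scalar case $b=L_b^x x$, $\sigma=L_\sigma^y y$, $f=0$ (so $\mathcal K=0$, $\mathcal P=(1+L_b^x)^2$, $\mathcal Q=(L_\sigma^y)^2$). The isometry then gives exactly $U_{\rm out}=\frac{e^{-\lambda}\mathcal Q}{1-e^{-\lambda}\mathcal P}V_{\rm in}$, which is strictly larger. Your Perron-weighted norm is therefore applied to an unproved inequality.

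The idea is easy to repair, and there are two ways to do it.
\begin{itemize}
\item \emph{Fully explicit map.} Set $X_{t+1}=x_t+b(t,x_t,y_t,z_t)+\sigma(t,x_t,y_t,z_t)\Delta W_t$ and $Y_t+Z_t\Delta W_t+\Delta N_t=e^{-\lambda}(y_{t+1}+f(t+1,x_{t+1},y_{t+1},z_{t+1}))$. Your bounds then become $U_{\rm out}\le e^{-\lambda}(\mathcal PU_{\rm in}+\mathcal QV_{\rm in})$ and $V_{\rm out}\le e^{-\lambda}(\mathcal RU_{\rm in}+\mathcal SV_{\rm in})$. The right side of \eqref{eq:fbsde-new-discount} is exactly $\rho(M)$, so your left-Perron-vector norm (perturbed when $\mathcal Q\mathcal R=0$) gives the contraction.
\item \emph{Your forward step plus the explicit backward step.} Eliminate $U_{\rm out}$ using $e^\lambda>\mathcal P$. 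This gives a contraction in $(y,z)$ alone with factor $e^{-\lambda}\mathcal S+\frac{e^{-2\lambda}\mathcal Q\mathcal R}{1-e^{-\lambda}\mathcal P}$. That factor is $<1$ under exactly the same threshold; it is the mirror image of the paper's computation.
\end{itemize}

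For comparison, the paper keeps the forward step explicit and the backward step implicit, solving an inner BS$\Delta$E via \cref{thm:bsde-solvability}. This yields the rank-one matrix $K_\lambda$, whose spectral radius is its trace, and an explicit weight $m_*$. The fully explicit route is slightly more elementary: it needs no inner BS$\Delta$E, since the backward step is just a conditional expectation plus martingale decomposition. It works with the full matrix $M$, the one the paper itself uses in \cref{cor:nonhomogeneous-stability}.

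Your uniqueness argument is fine, with one precision. Compare an arbitrary solution with $Y,Z\in S^{2,\lambda}$ against the constructed one, so that summability of the differences places the arbitrary solution in $L^{2,\lambda}$.
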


\begin{proof}
We prove the result by a fixed point argument.

\emph{Step 1: Construction of the solution map.}
We construct the fixed point on the complete Hilbert product space
\[
L^{2,\lambda}_{\F}(0,\infty;\R^m)
\times
L^{2,\lambda}_{\F}(0,\infty;\R^n)
\times
L^{2,\lambda}_{\F}(0,\infty;\R^{n\times d}),
\]
which is continuously embedded in \(L^{2,\lambda}\times S^{2,\lambda}\times S^{2,\lambda}\). For $(\mathfrak x,\mathfrak y,\mathfrak z)\in L^{2,\lambda}_{\F}\times
L^{2,\lambda}_{\F}\times L^{2,\lambda}_{\F}$, define \((X,Y,Z)=\Phi(\mathfrak x,\mathfrak y,\mathfrak z)\) as follows:
\begin{equation}\label{eq:Phi-forward-detailed}
X_{t+1}=\mathfrak x_t+b(t,\mathfrak x_t,\mathfrak y_t,\mathfrak z_t)
+\sigma(t,\mathfrak x_t,\mathfrak y_t,\mathfrak z_t)\Delta W_t,
\qquad X_0=x_0,
\end{equation}
\begin{equation}\label{eq:Phi-backward-detailed}
Y_t+Z_t\Delta W_t+\Delta N_t
=e^{-\lambda}\left(Y_{t+1}+f(t+1,X_{t+1},Y_{t+1},Z_{t+1})\right).
\end{equation}
For the solution $X$ to the forward equation \eqref{eq:Phi-forward-detailed}, by the martingale isometry and the Lipschitz condition,
\begin{align*}
\E|X_{t+1}|^2
&=\E\Bigl[|\mathfrak x_t+b(t,\mathfrak x_t,\mathfrak y_t,\mathfrak z_t)|^2
+|\sigma(t,\mathfrak x_t,\mathfrak y_t,\mathfrak z_t)|^2\Bigr]\\
&\leq C\E\Bigl[|\mathfrak x_t|^2+|\mathfrak y_t|^2+|\mathfrak z_t|^2
+|b(t,0,0,0)|^2+|\sigma(t,0,0,0)|^2\Bigr].
\end{align*}
Multiplying by \(e^{-\lambda(t+1)}\) and summing over \(t\ge0\), by Assumption~\ref{ass:fbsde-origin} we have \(X\in L^{2,\lambda}_{\F}\).

For the solution \((Y,Z,N)\) to infinite-horizon BS\(\Delta\)E \eqref{eq:Phi-backward-detailed} with
fixed \(X\), we regard
\[
(y,z)\longmapsto f(t,X_t,y,z).
\]
as the generator, and it is easy to check that BS\(\Delta\)E \eqref{eq:Phi-backward-detailed} with
fixed \(X\) satisfies Assumptions~\ref{ass:bsde-gen}--\ref{ass:bsde-origin}.
By Theorem~\ref{thm:bsde-solvability}, we first have the uniqueness in \(S_{\mathcal{F}}^{2,\lambda}\times
S_{\mathcal F}^{2,\lambda}\times
\mathcal M_{\perp,0}^{2,\lambda}\) and then yields the stronger summability \((Y,Z,\Delta N)\in L^{2,\lambda}_{\F}\times L^{2,\lambda}_{\F}\times L^{2,\lambda}_{\F}\). Hence \(\Phi\) is a mapping from the Hilbert space $L^{2,\lambda}_{\F}\times L^{2,\lambda}_{\F}\times L^{2,\lambda}_{\F}$ into itself.

\emph{Step 2: Estimate for the forward part.}
Take two inputs \((\mathfrak x,\mathfrak y,\mathfrak z)\) and \((\mathfrak x',\mathfrak y',\mathfrak z')\), and denote their images by \((X,Y,Z,N)\) and \((X',Y',Z',N')\). Write
\[
\delta\mathfrak x_t=\mathfrak x_t-\mathfrak x'_t,
\quad
\delta\mathfrak y_t=\mathfrak y_t-\mathfrak y'_t,
\quad
\delta\mathfrak z_t=\mathfrak z_t-\mathfrak z'_t,
\]
and
\[
\delta X_t=X_t-X_t',\quad \delta Y_t=Y_t-Y_t',\quad \delta Z_t=Z_t-Z_t'.
\]
From \eqref{eq:Phi-forward-detailed},
\begin{equation*}\label{eq:delta-forward-detailed}
\delta X_{t+1}=\delta\mathfrak x_t+\delta b_t+\delta\sigma_t\Delta W_t,
\end{equation*}
where
\[
\delta b_t=b(t,\mathfrak x_t,\mathfrak y_t,\mathfrak z_t)
-b(t,\mathfrak x'_t,\mathfrak y'_t,\mathfrak z'_t),
\]
and \(\delta\sigma_t\) is defined similarly. Since \(\delta X_0=0\), the martingale isometry gives
\begin{equation}\label{eq:forward-isometry-detailed}
\E|\delta X_{t+1}|^2
=
\E\left[|\delta\mathfrak x_t+\delta b_t|^2+|\delta\sigma_t|^2\right].
\end{equation}
Let
\[
U_t:=|\delta\mathfrak x_t|,
\qquad
V_t:=\sqrt{|\delta\mathfrak y_t|^2+|\delta\mathfrak z_t|^2}.
\]
By Assumption~\ref{ass:fbsde-coeff} and the Cauchy--Schwarz inequality,
\[
|\delta b_t|\leq L_b^xU_t+B_{yz}V_t,
\qquad
|\delta\sigma_t|\leq L_\sigma^xU_t+\Sigma_{yz}V_t.
\]
Consequently,
\begin{align}
|\delta\mathfrak x_t+\delta b_t|^2+|\delta\sigma_t|^2
&\leq
\big((1+L_b^x)U_t+B_{yz}V_t\big)^2
+\big(L_\sigma^xU_t+\Sigma_{yz}V_t\big)^2 \notag\\
&=\mathcal P_0U_t^2+\mathcal Q_0V_t^2+2\mathcal K U_tV_t \notag\\
&\leq \mathcal P|\delta\mathfrak x_t|^2
+\mathcal Q\bigl(|\delta\mathfrak y_t|^2+|\delta\mathfrak z_t|^2\bigr).
\label{eq:forward-pointwise-detailed}
\end{align}
Multiplying \eqref{eq:forward-isometry-detailed} by \(e^{-\lambda(t+1)}\) and noticing \eqref{eq:forward-pointwise-detailed}, 
we obtain
\begin{equation}\label{eq:forward-estimate-sum-detailed}
\|\delta X\|_\lambda^2
\leq
 e^{-\lambda}\mathcal P\|\delta\mathfrak x\|_\lambda^2
+e^{-\lambda}\mathcal Q\left(\|\delta\mathfrak y\|_\lambda^2+\|\delta\mathfrak z\|_\lambda^2\right).
\end{equation}

\emph{Step 3: Estimate for the backward part.}
By \eqref{eq:Phi-backward-detailed}, $(\delta Y, \delta Z, \Delta\delta N)$ satisfies
\begin{equation*}\label{eq:delta-backward-detailed}
\delta Y_t+\delta Z_t\Delta W_t+\Delta\delta N_t
=e^{-\lambda}\left(\delta Y_{t+1}+\delta f_{t+1}\right),
\end{equation*}
where
\[
\delta f_{t+1}=f(t+1,X_{t+1},Y_{t+1},Z_{t+1})
-f(t+1,X'_{t+1},Y'_{t+1},Z'_{t+1}).
\]
A similar deduction as in Lemma~\ref{lem:bsde-estimate} yields
\begin{align}
\E\left[e^{-\lambda t}\left(|\delta Y_t|^2+|\delta Z_t|^2+|\Delta\delta N_t|^2\right)\right]=
e^{-\lambda}\E\left[e^{-\lambda(t+1)}|\delta Y_{t+1}+\delta f_{t+1}|^2\right].
\label{eq:backward-isometry-detailed}
\end{align}
By the Lipschitz condition for \(f\),
\begin{align}
|\delta Y_{t+1}+\delta f_{t+1}|^2
&\leq
F_x^2|\delta X_{t+1}|^2+G^2\bigl(|\delta Y_{t+1}|^2+|\delta Z_{t+1}|^2\bigr)+2F_xG|\delta X_{t+1}|\sqrt{|\delta Y_{t+1}|^2+|\delta Z_{t+1}|^2} \notag\\
&\leq
\mathcal R|\delta X_{t+1}|^2
+\mathcal S\bigl(|\delta Y_{t+1}|^2+|\delta Z_{t+1}|^2\bigr).
\label{eq:backward-pointwise-detailed}
\end{align}
Combining \eqref{eq:backward-isometry-detailed} and \eqref{eq:backward-pointwise-detailed}, we have 
\begin{equation*}\label{eq:backward-estimate-sum-detailed}
\|\delta Y\|_\lambda^2+\|\delta Z\|_\lambda^2
\leq
 e^{-\lambda}\mathcal R\|\delta X\|_\lambda^2
+e^{-\lambda}\mathcal S\left(\|\delta Y\|_\lambda^2+\|\delta Z\|_\lambda^2\right).
\end{equation*}
Notice that \eqref{eq:fbsde-new-discount} in Assumption \ref{ass:fbsde-discount} implies \(e^\lambda>\mathcal S\). Hence
\begin{equation}\label{eq:YZ-by-X-detailed}
\|\delta Y\|_\lambda^2+\|\delta Z\|_\lambda^2
\leq
\frac{e^{-\lambda}\mathcal R}{1-e^{-\lambda}\mathcal S}\|\delta X\|_\lambda^2.
\end{equation}

\emph{Step 4: Contraction and explicit construction of the weight.}
Set
\[
U_{\rm out}:=\|\delta X\|_\lambda^2,
\qquad
V_{\rm out}:=\|\delta Y\|_\lambda^2+\|\delta Z\|_\lambda^2,
\]
and
\[
U_{\rm in}:=\|\delta\mathfrak x\|_\lambda^2,
\qquad
V_{\rm in}:=\|\delta\mathfrak y\|_\lambda^2+\|\delta\mathfrak z\|_\lambda^2.
\]
Then \eqref{eq:forward-estimate-sum-detailed} and \eqref{eq:YZ-by-X-detailed} imply
\[
\binom{U_{\rm out}}{V_{\rm out}}
\leq
K_\lambda
\binom{U_{\rm in}}{V_{\rm in}},
\]
where
\[
K_\lambda=
\begin{pmatrix}
a_\lambda & b_\lambda\\[3pt]
c_\lambda & d_\lambda
\end{pmatrix}
:=
\begin{pmatrix}
e^{-\lambda}\mathcal P & e^{-\lambda}\mathcal Q\\[4pt]
\dfrac{e^{-2\lambda}\mathcal P\mathcal R}{1-e^{-\lambda}\mathcal S}
&
\dfrac{e^{-2\lambda}\mathcal Q\mathcal R}{1-e^{-\lambda}\mathcal S}
\end{pmatrix}.
\]
For the matrix \(K_\lambda\), its determinant is $0$. The eigenvalues of $K_\lambda$ are \(0\) and $e^{-\lambda}\mathcal P+\frac{e^{-2\lambda}\mathcal Q\mathcal R}{1-e^{-\lambda}\mathcal S}$, and the latter is the spectral radius of $K_\lambda$, denoted by $\rho(K_\lambda)$.
Let \(x=e^\lambda\). The inequality \(\rho(K_\lambda)<1\) is equivalent to
\[
\frac{\mathcal P}{x}+\frac{\mathcal Q\mathcal R}{x(x-\mathcal S)}<1.
\]
Since \eqref{eq:fbsde-new-discount} implies \(x>\mathcal S\), the above inequality is further equivalent to
\[
x^2-(\mathcal P+\mathcal S)x+\mathcal P\mathcal S-\mathcal Q\mathcal R>0.
\]
The larger root of this quadratic polynomial is
\[
\frac{\mathcal P+\mathcal S+\sqrt{(\mathcal P-\mathcal S)^2+4\mathcal Q\mathcal R}}{2}.
\]
Thus \eqref{eq:fbsde-new-discount} implies both \(e^{-\lambda}\mathcal S<1\) and \(\rho(K_\lambda)<1\).

Pick any number \(\kappa\in(\rho(K_\lambda),1)\). We now construct an equivalent norm in $L^{2,\lambda}_{\F}\times
L^{2,\lambda}_{\F}\times L^{2,\lambda}_{\F}$ by a constant \(m_*\). If \(c_\lambda>0\), define
\begin{equation*}\label{eq:explicit-m-weight}
m_*:=\frac{c_\lambda}{\kappa-a_\lambda}>0.
\end{equation*}
This is well defined since \(a_\lambda\leq \rho(K_\lambda)=a_\lambda+d_\lambda<\kappa\). Note that the determinant of \(K_\lambda\) is $0$, so \(a_\lambda d_\lambda=b_\lambda c_\lambda\). Then we have
\begin{equation}\label{cz2}
\left\{
\begin{aligned}
&m_*a_\lambda+c_\lambda\leq\kappa m_*,\\
&m_*b_\lambda+d_\lambda
=\frac{b_\lambda c_\lambda}{\kappa-a_\lambda}+d_\lambda
=\frac{a_\lambda d_\lambda}{\kappa-a_\lambda}+d_\lambda
=\frac{\kappa d_\lambda}{\kappa-a_\lambda}
\leq \kappa.
\end{aligned}
\right.
\end{equation}
If \(c_\lambda=0\), then \(\mathcal R=0\) and \(d_\lambda=0\), and define
\[
m_*:=
\begin{cases}
\dfrac{\kappa}{2b_\lambda},& b_\lambda>0,\\[6pt]
1,& b_\lambda=0.
\end{cases}
\]
Then \eqref{cz2} still holds.

For this \(m_*\), define
\[
\|(X,Y,Z)\|_{m_*,\lambda}^2
:=m_*\|X\|_\lambda^2+\|Y\|_\lambda^2+\|Z\|_\lambda^2.
\]
Obviously, \eqref{cz2} implies
\[
\|\Phi(\mathfrak x,\mathfrak y,\mathfrak z)-\Phi(\mathfrak x',\mathfrak y',\mathfrak z')\|_{m_*,\lambda}^2
\leq
\kappa\|(\mathfrak x,\mathfrak y,\mathfrak z)-(\mathfrak x',\mathfrak y',\mathfrak z')\|_{m_*,\lambda}^2.
\]
Thus \(\Phi\) is a strict contraction with contraction factor \(\sqrt\kappa<1\) in the complete Hilbert space $L^{2,\lambda}_{\F}\times
L^{2,\lambda}_{\F}\times L^{2,\lambda}_{\F}$ under the norm $\|(\cdot,\cdot,\cdot)\|_{m_*,\lambda}$. Banach's fixed point theorem yields a unique fixed point \((X,Y,Z)\in L^{2,\lambda}_{\F}\times
L^{2,\lambda}_{\F}\times L^{2,\lambda}_{\F}\). The corresponding  strongly orthogonal martingale \(N\in\mathcal M_{\perp,0}^{2,\lambda}\) is uniquely determined by the discrete-time martingale decomposition in the backward equation, and the fixed point solves \eqref{eq:fbsde}. 
Moreover, the embedding \(L^{2,\lambda}\subset S^{2,\lambda}\) gives a solution \[
(X,Y,Z,N)\in
L_{\mathcal F}^{2,\lambda}
\times
S_{\mathcal F}^{2,\lambda}
\times
S_{\mathcal F}^{2,\lambda}
\times
\mathcal M_{\perp,0}^{2,\lambda}.
\].

The uniqueness of the solution in $L_{\mathcal F}^{2,\lambda}
\times
L_{\mathcal F}^{2,\lambda}
\times
L_{\mathcal F}^{2,\lambda}
\times
\mathcal M_{\perp,0}^{2,\lambda}$ can be extended to the larger space $L_{\mathcal F}^{2,\lambda}
\times
S_{\mathcal F}^{2,\lambda}
\times
S_{\mathcal F}^{2,\lambda}
\times
\mathcal M_{\perp,0}^{2,\lambda}$. Let \((X,Y,Z,N)\in L_{\mathcal F}^{2,\lambda}
\times
S_{\mathcal F}^{2,\lambda}
\times
S_{\mathcal F}^{2,\lambda}
\times
\mathcal M_{\perp,0}^{2,\lambda}\) be a solution. For the backward equation, we write
\[
g(t,y,z):=f(t,0,y,z)-f(t,0,0,0),
\]
and
\[
\phi_t:=f(t,X_t,Y_t,Z_t)-f(t,0,Y_t,Z_t)+f(t,0,0,0).
\]
Then \(g(t,0,0)=0\), the generator \(g\) satisfies the BS\(\Delta\)E Lipschitz condition and discount condition in \((y,z)\), and
\[
|\phi_t|^2\le C\bigl(|X_t|^2+|f(t,0,0,0)|^2\bigr).
\]
Define
\[
d_s:=C\E\left[e^{-\lambda s}\bigl(|X_s|^2+|f(s,0,0,0)|^2\bigr)\right],
\qquad s\ge0,
\]
and \(B_t:=\E[e^{-\lambda t}(|Y_t|^2+|Z_t|^2)]\). By Lemma~\ref{lem:bsde-estimate}, 
we have for some \(0<\alpha<1\),
\[
B_t\le \alpha B_{t+1}+d_{t+1}.
\]
Iterating this inequality a finite number \(K\) of times yields
\[
B_t\le \sum_{j=1}^{K}\alpha^{j-1}d_{t+j}+\alpha^K B_{t+K},
\]
where the sequence \(\{B_t\}_{t\ge0}\) is bounded since \((Y,Z)\in S^{2,\lambda}\). 
Taking  \(K\to\infty\), we have
\[
B_t\le \sum_{j=1}^{\infty}\alpha^{j-1}d_{t+j}.
\]
For each finite \(T\), by Tonelli's theorem for nonnegative series, we have
\begin{align*}
\sum_{t=0}^{T}B_t
\le \sum_{s=1}^{\infty}d_s
   \sum_{t=0}^{\min\{T,s-1\}}\alpha^{s-t-1}\le \frac{1}{1-\alpha}\sum_{s=1}^{\infty}d_s.
\end{align*}
Taking \(T\to\infty\), by monotone convergence, we obtain
\[
\sum_{t=0}^{\infty}B_t
\le C\left(\|X\|_\lambda^2+\sum_{t=0}^{\infty}\E[e^{-\lambda t}|f(t,0,0,0)|^2]\right)<\infty.
\]
Thus \((Y,Z)\in L^{2,\lambda}_{\F}\times L^{2,\lambda}_{\F}\), and the equation itself yields \(\Delta N\in L^{2,\lambda}_{\F}\). Therefore \((X,Y,Z,N)\in L_{\mathcal F}^{2,\lambda}
\times
L_{\mathcal F}^{2,\lambda}
\times
L_{\mathcal F}^{2,\lambda}
\times
\mathcal M_{\perp,0}^{2,\lambda}\), in which the solution is unique. The proof is complete.
\end{proof}

Based on Theorem \ref{thm:fbsde-solvability}, we consider a nonhomogeneous system
\begin{equation}\label{eq:nonhomogeneous-fbsde}
\left\{
\begin{aligned}
&X_{t+1}=X_t+\mathcal B(t,X_t,Y_t,Z_t)+\varphi_t
+\bigl(\Sigma^0(t,X_t,Y_t,Z_t)+\psi_t\bigr)\Delta W_t,\\
&Y_t+Z_t\Delta W_t+\Delta N_t
=e^{-\lambda}\bigl(Y_{t+1}+\mathcal G(t+1,X_{t+1},Y_{t+1},Z_{t+1})+\theta_{t+1}\bigr),\\
&X_0=0,
\end{aligned}
\right.
\end{equation}
where
\[
\mathcal B:\Omega\times\Zp\times\R^m\times\R^n\times\R^{n\times d}\to\R^m,
\quad
\Sigma^0:\Omega\times\Zp\times\R^m\times\R^n\times\R^{n\times d}\to\R^{m\times d},
\]
and
\[
\mathcal G:\Omega\times\Zp\times\R^m\times\R^n\times\R^{n\times d}\to\R^n.
\]
\begin{corollary}[A nonhomogeneous stability estimate]\label{cor:nonhomogeneous-stability}
Suppose that $\mathcal B,\Sigma^0, \mathcal G$ satisfy Assumptions \ref{ass:fbsde-coeff} and \ref{ass:fbsde-discount} with $b=\mathcal B$, $\sigma=\Sigma^0$ and $f=\mathcal G$, respectively, and
$
\mathcal B_t(0,0,0)=0,
\Sigma^0_t(0,0,0)=0,
\mathcal G_t(0,0,0)=0$. If
$
\varphi\in L^{2,\lambda}_{\mathcal F}(0,\infty;\R^m),
\psi\in L^{2,\lambda}_{\mathcal F}(0,\infty;\R^{m\times d}),
\theta\in L^{2,\lambda}_{\mathcal F}(0,\infty;\R^n)$,
then \eqref{eq:nonhomogeneous-fbsde} admits a unique solution, denoted by $(X,Y,Z,N)$. 

Moreover,
\[
(Y,Z,\Delta N)
\in
L^{2,\lambda}_{\F}(0,\infty;\R^n)
\times
L^{2,\lambda}_{\F}(0,\infty;\R^{n\times d})
\times
L^{2,\lambda}_{\F}(0,\infty;\R^n),
\]
and there exists a constant \(C>0\), depending only on \(\lambda\) and the Lipschitz constants, such that
\begin{align}\label{eq:nonhomogeneous-stability-estimate}
\|X\|_\lambda^2+\|Y\|_\lambda^2+\|Z\|_\lambda^2+\|\Delta N\|_\lambda^2\leq
C\left(\|\varphi\|_\lambda^2+\|\psi\|_\lambda^2+\|\theta\|_\lambda^2\right).
\end{align}
If \(X_0\neq0\), the right-hand side is enlarged by a constant multiple of \(|X_0|^2\).
\end{corollary}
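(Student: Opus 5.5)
Existence and uniqueness follow directly from \cref{thm:fbsde-solvability}. Set
\[
b(t,x,y,z):=\mathcal B(t,x,y,z)+\varphi_t,\quad
\sigma(t,x,y,z):=\Sigma^0(t,x,y,z)+\psi_t,\quad
f(t,x,y,z):=\mathcal G(t,x,y,z)+\theta_t .
\]
These coefficients are adapted and have exactly the same Lipschitz constants as $\mathcal B,\Sigma^0,\mathcal G$, so Assumptions~\ref{ass:fbsde-coeff} and \ref{ass:fbsde-discount} hold. Since $b(t,0,0,0)=\varphi_t$, $\sigma(t,0,0,0)=\psi_t$ and $f(t,0,0,0)=\theta_t$, Assumption~\ref{ass:fbsde-origin} is exactly the hypothesis $\varphi,\psi,\theta\in L^{2,\lambda}_{\F}$. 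Hence \cref{thm:fbsde-solvability} gives a unique solution $(X,Y,Z,N)$, together with the summability $(Y,Z,\Delta N)\in L^{2,\lambda}_{\F}$. It remains to prove the quantitative bound \eqref{eq:nonhomogeneous-stability-estimate}.

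For the estimate I will compare the solution with the zero solution using the contraction of Step~4. The homogeneous system ($\varphi=\psi=\theta=0$) has the zero solution. Let $\Phi$ be the solution map of Step~1 for the nonhomogeneous coefficients, and let $\Phi^0$ be the corresponding map for the homogeneous ones; note $\Phi^0(0,0,0)=0$. The solution is the fixed point $\xi=(X,Y,Z)$ of $\Phi$. By the triangle inequality in $\|\cdot\|_{m_*,\lambda}$,
\[
\|\xi\|_{m_*,\lambda}\le\|\Phi(\xi)-\Phi(0)\|_{m_*,\lambda}+\|\Phi(0)\|_{m_*,\lambda}
\le\sqrt\kappa\,\|\xi\|_{m_*,\lambda}+\|\Phi(0)\|_{m_*,\lambda}.
\]
This gives $\|\xi\|_{m_*,\lambda}\le(1-\sqrt\kappa)^{-1}\|\Phi(0)\|_{m_*,\lambda}$. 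The key observation is that $m_*$ and $\kappa$ depend only on the matrix $K_\lambda$, i.e.\ only on $\lambda$ and the Lipschitz constants, and not on $\varphi,\psi,\theta$. The equivalence constants between $\|\cdot\|_{m_*,\lambda}$ and the plain $\|\cdot\|_\lambda$ norm are therefore controlled in the same way.

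The main step is to bound $\|\Phi(0)\|$ by the data. Write $(\bar X,\bar Y,\bar Z,\bar N)=\Phi(0,0,0)$. For the forward part, $\bar X_{t+1}=\varphi_t+\psi_t\Delta W_t$ with $\bar X_0=X_0$, so the isometry gives $\|\bar X\|_\lambda^2\le|X_0|^2+2e^{-\lambda}(\|\varphi\|_\lambda^2+\|\psi\|_\lambda^2)$. For the backward part, $(\bar Y,\bar Z,\bar N)$ solves the BS$\Delta$E with generator $g(t,y,z)=\mathcal G(t,0,y,z)$ and forcing $\phi_{t}=\mathcal G(t,\bar X_t,\bar Y_t,\bar Z_t)-\mathcal G(t,0,\bar Y_t,\bar Z_t)+\theta_t$, which satisfies $|\phi_t|\le L_{\mathcal G}^x|\bar X_t|+|\theta_t|$. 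Since $e^\lambda>\mathcal S\ge(1+L^y_{\mathcal G})^2+(L^z_{\mathcal G})^2$, the generator $g$ satisfies Assumption~\ref{ass:bsde-discount}. I then apply \cref{lem:bsde-estimate} against the zero solution and sum over $t$ with the telescoping argument used in \cref{thm:bsde-solvability} (the boundary term is handled via $S^{2,\lambda}$-boundedness, or directly since the solution is in $L^{2,\lambda}$). This yields
\[
\|\bar Y\|_\lambda^2+\|\bar Z\|_\lambda^2+\|\Delta\bar N\|_\lambda^2\le\frac{C}{1-\gamma}\|\phi\|_\lambda^2\le C(\|\bar X\|_\lambda^2+\|\theta\|_\lambda^2).
\]
Combining the two parts, $\|\Phi(0)\|^2_{m_*,\lambda}\le C(\|\varphi\|_\lambda^2+\|\psi\|_\lambda^2+\|\theta\|_\lambda^2+|X_0|^2)$.

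Finally, the fixed-point bound above controls $\|X\|_\lambda^2+\|Y\|_\lambda^2+\|Z\|_\lambda^2$. To control $\Delta N$, I apply the same summed form of \cref{lem:bsde-estimate} to the actual solution, now with forcing $\mathcal G(t,X_t,Y_t,Z_t)-\mathcal G(t,0,Y_t,Z_t)+\theta_t$. This bounds $\|\Delta N\|_\lambda^2$ by $C(\|X\|_\lambda^2+\|\theta\|_\lambda^2)$. When $X_0=0$ the $|X_0|^2$ term drops out, which gives \eqref{eq:nonhomogeneous-stability-estimate}; when $X_0\neq0$ it remains as the stated enlargement.
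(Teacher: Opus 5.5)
Your proof is correct, but it reaches the estimate by a different route than the paper.

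\textbf{The paper's route.} The paper does not reuse the fixed-point map. It derives a priori inequalities directly on the solution, using Young's inequality with a factor $(1+\varepsilon)$ to split off the data. With $U=\|X\|_\lambda^2$, $V=\|Y\|_\lambda^2+\|Z\|_\lambda^2$, $A=\|\varphi\|_\lambda^2+\|\psi\|_\lambda^2$ and $B=\|\theta\|_\lambda^2$, this gives the componentwise system $\binom{U}{V}\le(1+\varepsilon)e^{-\lambda}\begin{pmatrix}\mathcal P&\mathcal Q\\ \mathcal R&\mathcal S\end{pmatrix}\binom{U}{V}+C_\varepsilon\binom{A}{B}$. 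The spectral radius of $e^{-\lambda}\begin{pmatrix}\mathcal P&\mathcal Q\\ \mathcal R&\mathcal S\end{pmatrix}$ is exactly the threshold in the discount condition. So the paper chooses $\varepsilon$ small, inverts $I-(1+\varepsilon)e^{-\lambda}\begin{pmatrix}\mathcal P&\mathcal Q\\ \mathcal R&\mathcal S\end{pmatrix}$ by a nonnegative Neumann series, and reads off $\|\Delta N\|_\lambda^2$ from the same backward inequality.

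\textbf{Your route.} You use the standard Banach a priori bound $\|\xi\|_{m_*,\lambda}\le(1-\sqrt\kappa)^{-1}\|\Phi(0)\|_{m_*,\lambda}$ for the fixed point. All the coupling is thus absorbed by the contraction already proved in Step 4, whose $\kappa$ and $m_*$ depend only on $\lambda$ and the Lipschitz constants. The only new work is estimating $\Phi(0)$, which is fully decoupled: an explicit forward recursion $\bar X_{t+1}=\varphi_t+\psi_t\Delta W_t$, plus one BS$\Delta$E with $\bar X$ given. Your use of a solution-dependent forcing $\phi$ in the BS$\Delta$E lemma, compared against the zero solution, is legitimate because $\mathcal G(t,0,0,0)=0$. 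It mirrors the paper's own uniqueness-extension argument, and the summation is justified since $(\bar Y,\bar Z,\Delta\bar N)\in L^{2,\lambda}$.

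\textbf{What each buys.} Your route is more economical: it needs no $\varepsilon$-perturbation or matrix inversion, and it tracks the $X_0\neq0$ enlargement explicitly. The paper's route is a genuine a priori estimate that uses only the equation and the finiteness of the norms. It would therefore survive if existence were obtained by some other method, and it shows directly that the same spectral threshold governs the nonhomogeneous estimate. A cosmetic point: the homogeneous map $\Phi^0$ you introduce is never used.
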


\begin{proof}
We only need to prove the estimate \eqref{eq:nonhomogeneous-stability-estimate}.

Set
\[
U:=\|X\|_\lambda^2,
\ \
V:=\|Y\|_\lambda^2+\|Z\|_\lambda^2,
\ \
D:=\|\Delta N\|_\lambda^2,\ \
A:=\|\varphi\|_\lambda^2+\|\psi\|_\lambda^2,
\ \
B:=\|\theta\|_\lambda^2.
\]
For the forward equation, put
\[
A_t^0:=X_t+\mathcal B_t(X_t,Y_t,Z_t),
\qquad
\Gamma_t^0:=\Sigma^0_t(X_t,Y_t,Z_t).
\]
As in the proof of Theorem~\ref{thm:fbsde-solvability}, the homogeneous Lipschitz bounds give
\begin{equation*}\label{eq:nonhom-forward-hom-bound}
|A_t^0|^2+|\Gamma_t^0|^2
\leq
\mathcal P |X_t|^2+\mathcal Q\bigl(|Y_t|^2+|Z_t|^2\bigr).
\end{equation*}
Using the martingale isometry and Young's inequality, for any \(\varepsilon>0\), we have
\begin{align*}
\E|X_{t+1}|^2
=\E\left[|A_t^0+\varphi_t|^2+|\Gamma_t^0+\psi_t|^2\right]\leq
(1+\varepsilon)\E\left[|A_t^0|^2+|\Gamma_t^0|^2\right]
+\left(1+\frac1\varepsilon\right)\E\left[|\varphi_t|^2+|\psi_t|^2\right].
\end{align*}
Multiplying by \(e^{-\lambda(t+1)}\), summing over \(t\ge0\), and using \(X_0=0\), we get
\begin{equation*}\label{eq:nonhom-forward-estimate}
U\leq
(1+\varepsilon)e^{-\lambda}(\mathcal P U+\mathcal Q V)+C_\varepsilon A.
\end{equation*}

For the backward equation, the isometry leads to
\begin{align*}
\E\left[e^{-\lambda t}\left(|Y_t|^2+|Z_t|^2+|\Delta N_t|^2\right)\right]=e^{-\lambda}\E\left[e^{-\lambda(t+1)}
\left|Y_{t+1}+\mathcal G_{t+1}(X_{t+1},Y_{t+1},Z_{t+1})+\theta_{t+1}\right|^2\right].
\end{align*}
The homogeneous part satisfies
\begin{equation*}\label{eq:nonhom-backward-hom-bound}
\left|Y_{t+1}+\mathcal G_{t+1}(X_{t+1},Y_{t+1},Z_{t+1})\right|^2
\leq
\mathcal R |X_{t+1}|^2+
\mathcal S\bigl(|Y_{t+1}|^2+|Z_{t+1}|^2\bigr).
\end{equation*}
Applying Young's inequality again and summing over \(t\ge0\) yields
\begin{equation}\label{eq:nonhom-backward-estimate}
V+D\leq
(1+\varepsilon)e^{-\lambda}(\mathcal R U+\mathcal S V)+C_\varepsilon B.
\end{equation}
In particular,
\begin{equation}\label{eq:nonhom-vector-estimate}
\binom{U}{V}
\leq
(1+\varepsilon)e^{-\lambda}
\begin{pmatrix}
\mathcal P&\mathcal Q\\
\mathcal R&\mathcal S
\end{pmatrix}
\binom{U}{V}
+C_\varepsilon
\binom{A}{B}.
\end{equation}
By \eqref{eq:fbsde-new-discount}, the spectral radius of the matrix
$
e^{-\lambda}
\begin{pmatrix}
\mathcal P&\mathcal Q\\
\mathcal R&\mathcal S
\end{pmatrix}
$
is strictly smaller than $1$. Hence we may choose sufficiently small \(\varepsilon>0\) such that the spectral radius of the matrix $
(1+\varepsilon)e^{-\lambda}
\begin{pmatrix}
\mathcal P&\mathcal Q\\
\mathcal R&\mathcal S
\end{pmatrix}
$ in \eqref{eq:nonhom-vector-estimate} is still strictly smaller than $1$. Since all the elements in the matrix  $
(1+\varepsilon)e^{-\lambda}
\begin{pmatrix}
\mathcal P&\mathcal Q\\
\mathcal R&\mathcal S
\end{pmatrix}
$ are nonnegative, \(I-(1+\varepsilon)e^{-\lambda}\begin{psmallmatrix}\mathcal P&\mathcal Q\\ \mathcal R&\mathcal S\end{psmallmatrix}\) has a nonnegative inverse given by the convergent Neumann series. Therefore there exists a constant \(C_0>0\) such that
\[
U+V\leq C_0(A+B).
\]
Moreover, by \eqref{eq:nonhom-backward-estimate} and the above inequality, there exist constants \(C_1, C>0\) such that
\[
D\leq V+D\leq C_1(U+V)+C_\varepsilon B\leq C_0C_1(A+B)+C_\varepsilon B\leq C(A+B).
\]
Then \eqref{eq:nonhomogeneous-stability-estimate} follows from the last two inequalities.
\end{proof}

\section{Stochastic Maximum Principle on Infinite Horizon}\label{sec:smp}


In this section, we study the infinite-horizon optimal control problem depicted by FBS\(\Delta\)E in our framework. To begin with, we give
a controlled FBS\(\Delta\)E
\begin{equation}\label{eq:controlled-fbsde}
\left\{
\begin{aligned}
&X_{t+1}=X_t+b(t,X_t,Y_t,Z_t,u_t)+\sigma(t,X_t,Y_t,Z_t,u_t)\Delta W_t,\\
&Y_t+Z_t\Delta W_t+\Delta N_t
=e^{-\lambda}\left(Y_{t+1}+f(t+1,X_{t+1},Y_{t+1},Z_{t+1},u_{t+1})\right),\\
&X_0=x_0.
\end{aligned}
\right.
\end{equation}
The cost functional is
\begin{equation*}\label{eq:cost}
J(u)=\sum_{t=0}^{\infty}\E\left[e^{-\lambda t}l(t,X_t,Y_t,Z_t,u_t)\right].
\end{equation*}
Let \(U\subset\R^r\) be a nonempty convex set. An admissible control is an \(U\)-valued, jointly measurable and adapted process \(u\) satisfying
\[
\sum_{t=0}^{\infty}\E[e^{-\lambda t}|u_t|^2]<\infty.
\]
The set of all admissible controls is denoted by \(\Uad\).
The problem is to find \(\bar u\in\Uad\) such that
\[
J(\bar u)=\inf_{u\in\Uad}J(u).
\]


\begin{assumption}[Lipschitz conditions and differentiability conditions]\label{ass:control-state}
The coefficients
\[
(b,\sigma,f):\Omega\times\Zp\times\R^m\times\R^n\times\R^{n\times d}\times U\to\R^m\times\R^{m\times d}\times\R^n
\]
are jointly measurable and adapted.  
There exist nonnegative constants
\[
L_b^x,L_b^y,L_b^z,L_b^u,
\qquad
L_\sigma^x,L_\sigma^y,L_\sigma^z,L_\sigma^u,
\qquad
L_f^x,L_f^y,L_f^z,L_f^u
\]
such that, for any \(t\in\Zp\) and
\(\mathbb P\)-{\rm a.s.} \(\omega\in\Omega\), the following inequalities
hold for all \(x_1,x_2\in\mathbb{R}^m\),
\(y_1,y_2\in\mathbb{R}^n\),
\(z_1,z_2\in\mathbb{R}^{n\times d}\), and \(u_1,u_2\in U\):
\begin{align*}
&|b(t,x_1,y_1,z_1,u_1)-b(t,x_2,y_2,z_2,u_2)|\leq
L_b^x|x_1-x_2|+L_b^y|y_1-y_2|+L_b^z|z_1-z_2|+L_b^u|u_1-u_2|,\\
&|\sigma(t,x_1,y_1,z_1,u_1)-\sigma(t,x_2,y_2,z_2,u_2)|\leq
L_\sigma^x|x_1-x_2|+L_\sigma^y|y_1-y_2|+L_\sigma^z|z_1-z_2|+L_\sigma^u|u_1-u_2|,\\
&|f(t,x_1,y_1,z_1,u_1)-f(t,x_2,y_2,z_2,u_2)|\leq
L_f^x|x_1-x_2|+L_f^y|y_1-y_2|+L_f^z|z_1-z_2|+L_f^u|u_1-u_2|.
\end{align*}
Moreover, for any \(t\in\Zp\) and
\(\mathbb P\)-{\rm a.s.} \(\omega\in\Omega\), the coefficients
\(b\), \(\sigma\), and \(f\) are continuously differentiable
with respect to \((x,y,z,u)\).
\end{assumption}

\begin{assumption}[Discount condition]\label{ass:control-discount}
Set 
\begin{align*}
&B_c:=\sqrt{(L_b^y)^2+(L_b^z)^2+(L_\sigma^y)^2},
\qquad
\Sigma_c:=\sqrt{(L_b^z)^2+(L_\sigma^y)^2+(L_\sigma^z)^2},\\
&\mathcal P_{0,c}:=(1+L_b^x+L_f^y)^2+(L_\sigma^x)^2+(L_f^z)^2,
\qquad
\mathcal Q_{0,c}:=B_c^2+\Sigma_c^2,\qquad F_c:=L_f^x
\\
& \mathcal K_c:=(1+L_b^x+L_f^y)B_c+(L_\sigma^x+L_f^z)\Sigma_c,
\qquad
G_c:=\sqrt{(1+L_b^x+L_f^y)^2+(L_\sigma^x)^2+(L_f^z)^2},
\end{align*}
and define
\[
\mathcal P_c:=\mathcal P_{0,c}+\mathcal K_c,
\qquad
\mathcal Q_c:=\mathcal Q_{0,c}+\mathcal K_c,
\qquad
\mathcal R_c:=F_c^2+F_cG_c,
\qquad
\mathcal S_c:=G_c^2+F_cG_c.
\]
We assume
\begin{equation}\label{eq:control-global-discount}
e^\lambda>\frac{\mathcal P_c+\mathcal S_c+\sqrt{(\mathcal P_c-\mathcal S_c)^2+4\mathcal Q_c\mathcal R_c}}{2}.
\end{equation}
\end{assumption}

\begin{assumption}[Integrability at the origin]\label{ass:control-origin}
There exists \(u^0\in U\) such that
\[
\sum_{t=0}^{\infty}\E\left[e^{-\lambda t}
\left(|b(t,0,0,0,u^0)|^2+|\sigma(t,0,0,0,u^0)|^2+|f(t,0,0,0,u^0)|^2\right)\right]<\infty.
\]
If \(0\in U\), we take \(u^0=0\).
\end{assumption}

\begin{assumption}[Cost functional conditions]\label{ass:control-diff-cost}
The running cost
\[
l:\Omega\times\mathbb Z_+\times\R^m\times\R^n\times\R^{n\times d}\times U\to\R
\]
is jointly measurable and adapted.
for any \(t\in\Zp\) and
\(\mathbb P\)-{\rm a.s.} \(\omega\in\Omega\),
it is continuously differentiable with respect to \((x,y,z,u)\), and
there exist a nonnegative constant
\(C\) and a nonnegative adapted process \(\varrho=\{\varrho_t\}_{t\geq0}\) with
\[
\sum_{t=0}^{\infty}\E[e^{-\lambda t}|\varrho_t|^2]<\infty
\]
such that, for any \(t\in\Zp\) and
\(\mathbb P\)-{\rm a.s.} \(\omega\in\Omega\), the following inequalities
hold for all \(x\in\mathbb{R}^m\), \(y\in\mathbb{R}^n\),
\(z\in\mathbb{R}^{n\times d}\), and \(u\in U\):
\[
|l(t,x,y,z,u)|\leq \varrho_t+C(1+|x|^2+|y|^2+|z|^2+|u|^2),
\]
and
\[
|l_x(t,x,y,z,u)|+|l_y(t,x,y,z,u)|+|l_z(t,x,y,z,u)|+|l_u(t,x,y,z,u)|
\leq \varrho_t+C(1+|x|+|y|+|z|+|u|).
\]
\end{assumption}

\begin{remark}\label{rem:any-control-origin}
Assumption~\ref{ass:control-origin} implies the corresponding origin integrability for any admissible control \(u\in\Uad\). Indeed, by the control Lipschitz condition,
\[
\sum_{t=0}^{\infty}\E\left[e^{-\lambda t}|b(t,0,0,0,u_t)|^2\right]
\leq
2\sum_{t=0}^{\infty}\E\left[e^{-\lambda t}|b(t,0,0,0,u^0)|^2\right]+2\sum_{t=0}^{\infty}\E\left[e^{-\lambda t}(L_b^u)^2|u_t-u^0|^2\right]<\infty,
\]
and the same estimate holds for \(\sigma\) and \(f\). Hence the controlled state equation is well defined for all \(u\in\Uad\), and thus by Theorem~\ref{thm:fbsde-solvability}, \(X\in L^{2,\lambda}_{\F}\) and \((Y,Z)\in S^{2,\lambda}_{\F}\times S^{2,\lambda}_{\F}\), and the same a priori estimate yields \((Y,Z,\Delta N)\in L^{2,\lambda}_{\F}\times L^{2,\lambda}_{\F}\times L^{2,\lambda}_{\F}\).
\end{remark}

\begin{lemma}[Absolute convergence of the cost functional]\label{lem:cost-well-defined}
Under Assumptions~\ref{ass:control-state}--\ref{ass:control-diff-cost}, the series defining \(J(u)\) is absolutely convergent for any \(u\in\Uad\). In particular, \(J(u)\in\R\).
\end{lemma}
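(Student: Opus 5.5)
The plan is to bound each summand of \(J(u)\) in absolute value using the growth condition in Assumption~\ref{ass:control-diff-cost}, and then show that every resulting series is finite. For this I need \(X,Y,Z\in L^{2,\lambda}_{\F}\) for the state trajectory of an arbitrary \(u\in\Uad\), which Remark~\ref{rem:any-control-origin} supplies via Theorem~\ref{thm:fbsde-solvability}. Two points must be checked to invoke that theorem, and I handle them first.

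\emph{Step 1: the state equation falls under Theorem~\ref{thm:fbsde-solvability}.} Fix \(u\in\Uad\) and set \(\tilde b(t,x,y,z):=b(t,x,y,z,u_t)\), and similarly \(\tilde\sigma,\tilde f\). These coefficients are jointly measurable and adapted, since \(u\) is adapted. They are Lipschitz with the constants \(L^x_\cdot,L^y_\cdot,L^z_\cdot\) of Assumption~\ref{ass:control-state}.

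Next I verify Assumption~\ref{ass:fbsde-discount} by comparing constants, using only that all Lipschitz constants are nonnegative:
\[
B_c\ge B_{yz},\qquad \Sigma_c\ge\Sigma_{yz},\qquad \mathcal P_{0,c}\ge\mathcal P_0,\qquad \mathcal K_c\ge\mathcal K,\qquad G_c\ge G .
\]
The last inequality holds because \((1+L_b^x+L_f^y)^2+(L_f^z)^2\ge(1+L_f^y)^2+(L_f^z)^2\). Together with \(F_c=F_x\), these give
\[
\mathcal P_c\ge\mathcal P,\qquad \mathcal Q_c\ge\mathcal Q,\qquad \mathcal R_c\ge\mathcal R,\qquad \mathcal S_c\ge\mathcal S .
\]
The threshold on the right of \eqref{eq:fbsde-new-discount} is the Perron root of the nonnegative matrix \(\begin{psmallmatrix}\mathcal P&\mathcal Q\\ \mathcal R&\mathcal S\end{psmallmatrix}\). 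The Perron root is monotone nondecreasing in the entries of a nonnegative matrix. Alternatively, one can check directly that the larger root is nondecreasing in each of \(\mathcal P,\mathcal Q,\mathcal R,\mathcal S\). Hence \eqref{eq:control-global-discount} implies \eqref{eq:fbsde-new-discount}.

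Integrability at the origin is exactly Remark~\ref{rem:any-control-origin}. Theorem~\ref{thm:fbsde-solvability} therefore gives \(X,Y,Z\in L^{2,\lambda}_{\F}\).

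\emph{Step 2: summation.} By the growth bound,
\[
\sum_{t\ge0}\E\bigl[e^{-\lambda t}|l(t,X_t,Y_t,Z_t,u_t)|\bigr]
\le\sum_{t\ge0}e^{-\lambda t}\E\varrho_t
+C\sum_{t\ge0}e^{-\lambda t}
+C\bigl(\|X\|_\lambda^2+\|Y\|_\lambda^2+\|Z\|_\lambda^2+\|u\|_\lambda^2\bigr).
\]
The second series is geometric and finite because \(\lambda>0\). The last group is finite by Step 1 and the admissibility of \(u\).

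The first term is the only slightly delicate one, since \(\varrho\) is assumed only weighted square-summable. I handle it by writing \(e^{-\lambda t}\E\varrho_t=e^{-\lambda t/2}\cdot e^{-\lambda t/2}\E\varrho_t\) and applying Cauchy--Schwarz twice, in \(\Omega\) and then in \(t\):
\[
\sum_{t\ge0}e^{-\lambda t}\E\varrho_t
\le\Bigl(\sum_{t\ge0}e^{-\lambda t}\Bigr)^{1/2}\Bigl(\sum_{t\ge0}e^{-\lambda t}\E\varrho_t^2\Bigr)^{1/2}<\infty .
\]
Measurability of \(l(t,X_t,Y_t,Z_t,u_t)\) follows from the joint measurability of \(l\) and the adaptedness of its arguments. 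The series therefore converges absolutely, and \(J(u)\in\R\).

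The main obstacle I expect is Step 1, namely making the comparison of the two discount conditions airtight. The quantities in Assumption~\ref{ass:control-discount} are arranged differently from those in Assumption~\ref{ass:fbsde-discount}, because they are designed also to cover the adjoint system, so one must check carefully that each of them dominates its counterpart. One must also confirm the monotonicity of the threshold in the entries of the matrix.
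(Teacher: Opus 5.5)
Your proposal is correct and follows the paper's proof. You obtain \(X,Y,Z\in L^{2,\lambda}_{\F}\) from Theorem~\ref{thm:fbsde-solvability} via Remark~\ref{rem:any-control-origin}, then bound the series with the growth condition, a geometric sum, and Cauchy--Schwarz for the \(\varrho\)-term; your Step~1 (showing \(\mathcal P_c\ge\mathcal P\), \(\mathcal Q_c\ge\mathcal Q\), \(\mathcal R_c\ge\mathcal R\), \(\mathcal S_c\ge\mathcal S\) and using monotonicity of the Perron root, so that Assumption~\ref{ass:control-discount} implies Assumption~\ref{ass:fbsde-discount} for the frozen-control coefficients) makes explicit a verification the paper leaves implicit.
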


\begin{proof}
For \(u\in\Uad\), by Theorem \ref{thm:fbsde-solvability} and Remark \ref{rem:any-control-origin}, the state equation \eqref{eq:controlled-fbsde} admits a unique solution \(X,Y,Z\in L_{\mathcal F}^{2,\lambda}\). 
Hence Assumption~\ref{ass:control-diff-cost} yields
\begin{align*}
&\sum_{t=0}^{\infty}\E\!\left[e^{-\lambda t}
|l(t,X_t,Y_t,Z_t,u_t)|\right]\\
&\quad\leq
\left(\sum_{t=0}^{\infty}e^{-\lambda t}\right)^{1/2}
\left(\sum_{t=0}^{\infty}\E[e^{-\lambda t}\varrho_t^2]\right)^{1/2}
+C\sum_{t=0}^{\infty}e^{-\lambda t}
+C\bigl(\|X\|_\lambda^2+\|Y\|_\lambda^2
+\|Z\|_\lambda^2+\|u\|_\lambda^2\bigr)<\infty.
\end{align*}
\end{proof}

Define a finite measure \(\nu\) in \(\Omega\times\mathbb Z_+\) by
\[
\nu(d\omega,t)
:=e^{-\lambda t}\mathbb P(d\omega),
\qquad \omega\in\Omega,\ t\ge0.
\]
Equivalently, for any jointly measurable, adapted and nonnegative process \(\Phi\),
\[
\int_{\Omega\times\mathbb Z_+}\Phi(\omega,t)\,d\nu
=
\sum_{t=0}^{\infty}
\E\left[e^{-\lambda t}\Phi(\omega,t)\right].
\]
Since \(\lambda>0\),
\[
\nu(\Omega\times\mathbb Z_+)
=
\sum_{t=0}^{\infty}e^{-\lambda t}
<\infty.
\]
In particular, for $R\in L_{\mathcal F}^{2,\lambda}$,
\[
\|R\|_{L_{\mathcal F}^{2,\lambda}}^2
=
\int_{\Omega\times\mathbb Z_+}|R_t(\omega)|^2\,d\nu
=
\|R\|_{L^2(\nu)}^2.
\]

\begin{lemma}[Controlled convergence on the discounted product space]\label{lem:controlled-convergence}
\emph{(i)}
Suppose that \(\{A^\varepsilon\}_{0<\varepsilon\leq1}\) is a family
of adapted operator-valued processes satisfying
\(
\sup_{0<\varepsilon\le1}
\operatorname*{ess\,sup}_{(\omega,t)}
\|A_t^\varepsilon(\omega)\|_{\mathrm{op}}
\\<\infty.
\)
Suppose further that
\(
\|A^\varepsilon\|_\lambda\longrightarrow0
\) as \(\varepsilon\to0\).
Then, for any \(R\in L_{\mathcal F}^{2,\lambda}\),
\(
\|A^\varepsilon R\|_\lambda\longrightarrow0\) as \(\varepsilon\to0\).
Moreover, if
\(
\|R^\varepsilon-R\|_\lambda\longrightarrow0
\) as \(\varepsilon\to0\),
then
\(
\|A^\varepsilon R^\varepsilon\|_\lambda\longrightarrow0
\) as \(\varepsilon\to0
\).


\emph{(ii)} Denote by \(\mathcal X\) and \(\mathcal Y\) two finite-dimensional normed spaces.
Suppose that $\{S^\varepsilon\}_{0<\varepsilon\leq1}\subset L_{\mathcal F}^{2,\lambda}(0,\infty;\mathcal X)$ and
\(S^\varepsilon\to S\) in \(L_{\mathcal F}^{2,\lambda}(0,\infty;\mathcal X)\) as $\varepsilon\to0$, and \(a=\{a_t\}_{t\ge0}\in L_{\mathcal F}^{2,\lambda}(0,\infty;\R^+)\).
for any \(t\ge0\), let
\[
(\omega,s)\longmapsto\Phi(\omega,t,s),\qquad \omega\in\Omega,\ s\in \mathcal X,
\]
be a jointly measurable, adapted and a.s. continuous in $s$ mapping from $\Omega\times \mathcal X$ to \(\mathcal Y\). If
\[
|\Phi(t,s)|_{\mathcal Y}\le a_t+C(1+|s|_{\mathcal X}),
\]
Then
\[
\|\bar\Phi^\varepsilon-\Phi(\cdot,S)\|_{\lambda}\longrightarrow0\ \ \ {\rm as}\ \varepsilon\to0,
\]
where
\[
\bar\Phi^\varepsilon(t):=\int_0^1
\Phi\bigl(t,S_t+\rho(S_t^\varepsilon-S_t)\bigr)\,d\rho.
\]

\end{lemma}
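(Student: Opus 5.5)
Both parts will be proved as dominated/Vitali-type convergence statements on the finite measure space $(\Omega\times\mathbb Z_+,\nu)$. As noted above, $\|\cdot\|_\lambda$ is exactly the $L^2(\nu)$ norm. Throughout, convergence in $L^2(\nu)$ will be handled by passing to subsequences that converge $\nu$-a.e., and then applying dominated convergence together with the standard subsequence–subsequence argument.

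\emph{Part (i).} Let $M:=\sup_\varepsilon\operatorname{ess\,sup}\|A^\varepsilon_t\|_{\mathrm{op}}$. I will first show $\|A^\varepsilon R\|_\lambda\to0$ by contradiction. Suppose there are $\varepsilon_k\to0$ with $\|A^{\varepsilon_k}R\|_\lambda\ge\eta>0$. Since $\|A^{\varepsilon_k}\|_{L^2(\nu)}\to0$, a further subsequence satisfies $A^{\varepsilon_k}\to0$ $\nu$-a.e. (the Frobenius and operator norms are equivalent in finite dimensions). Along it we have $|A^{\varepsilon_k}R|^2\le M^2|R|^2\in L^1(\nu)$ and $|A^{\varepsilon_k}R|\to0$ $\nu$-a.e. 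Dominated convergence then gives $\|A^{\varepsilon_k}R\|_\lambda\to0$, a contradiction. For the second claim I will split
\[
\|A^\varepsilon R^\varepsilon\|_\lambda\le\|A^\varepsilon(R^\varepsilon-R)\|_\lambda+\|A^\varepsilon R\|_\lambda\le M\|R^\varepsilon-R\|_\lambda+\|A^\varepsilon R\|_\lambda,
\]
and both terms tend to zero.

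\emph{Part (ii).} Again argue by contradiction. Suppose $\varepsilon_k\to0$ with $\|\bar\Phi^{\varepsilon_k}-\Phi(\cdot,S)\|_\lambda\ge\eta$. Pass to a subsequence along which $S^{\varepsilon_k}\to S$ $\nu$-a.e. and, by the partial converse of dominated convergence (Riesz–Fischer), there is $h\in L^2(\nu)$ with $|S^{\varepsilon_k}-S|\le h$ $\nu$-a.e. for all $k$. Fix $(\omega,t)$ outside a null set. For each $\rho\in[0,1]$ the point $S_t+\rho(S^{\varepsilon_k}_t-S_t)$ converges to $S_t$, so by continuity of $\Phi(t,\cdot)$ the integrand converges. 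It is also bounded by
\[
a_t+C(1+|S_t|+h_t),
\]
uniformly in $\rho$ and $k$. Dominated convergence in $\rho$ therefore gives $\bar\Phi^{\varepsilon_k}(t)\to\Phi(t,S_t)$ pointwise $\nu$-a.e. Next, $|\bar\Phi^{\varepsilon_k}-\Phi(\cdot,S)|^2\le 4\bigl(a+C(1+|S|+h)\bigr)^2$, and the right-hand side is in $L^1(\nu)$ because $a,S,h\in L^2(\nu)$ and $\nu$ is finite (so constants are in $L^2(\nu)$). A second application of dominated convergence yields $\|\bar\Phi^{\varepsilon_k}-\Phi(\cdot,S)\|_\lambda\to0$, which is the desired contradiction.

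Two routine measurability checks remain. First, $\bar\Phi^\varepsilon$ is jointly measurable and adapted. This holds because the map $(\omega,\rho)\mapsto\Phi(t,S_t+\rho(S^\varepsilon_t-S_t))$ is jointly measurable (it is Carathéodory, being continuous in $s$), so Fubini applies. Second, $\Phi(\cdot,S)$ and $\bar\Phi^\varepsilon$ lie in $L^{2,\lambda}$, which follows from the same growth bound.

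The main obstacle is producing a single integrable dominating function in (ii). Pointwise convergence of $S^\varepsilon$ alone does not dominate $|S^\varepsilon|$. This is exactly why the argument extracts a subsequence with an $L^2(\nu)$ majorant $h$ and then recovers convergence of the full family via the subsequence principle.
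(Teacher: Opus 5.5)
Your proof is correct, but it takes a different route from the paper's, in both parts.

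\textbf{Part (i).} The paper avoids subsequences altogether. It splits $\Omega\times\mathbb Z_+$ into $\{|R|\le K\}$ and $\{|R|>K\}$ and obtains the direct bound $\|A^\varepsilon R\|_\lambda^2\le K^2\|A^\varepsilon\|_\lambda^2+M^2\int_{\{|R|>K\}}|R|^2\,d\nu$. You instead extract a $\nu$-a.e.\ convergent subsequence, apply dominated convergence with majorant $M^2|R|^2$, and close with the subsequence principle. The second claim of (i) is handled identically in both.

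\textbf{Part (ii).} The paper lifts everything to $(\Omega\times\mathbb Z_+\times[0,1],\nu\otimes d\rho)$. There it:
\begin{itemize}
\item shows that $G^\varepsilon(t,\rho)=\Phi(t,S_t+\rho(S^\varepsilon_t-S_t))-\Phi(t,S_t)\to0$ in measure;
\item derives uniform integrability of $|G^{\varepsilon_n}|^2$ from the bound $C(a^2+1+|S|^2+|S^{\varepsilon_n}-S|^2)$ together with the $L^1(\nu)$-convergence of $|S^{\varepsilon_n}-S|^2$;
\item applies Vitali's theorem;
\item returns to $\bar\Phi^\varepsilon$ only at the end, via $|\int_0^1G^\varepsilon\,d\rho|^2\le\int_0^1|G^\varepsilon|^2\,d\rho$.
\end{itemize}
You instead use the partial converse of dominated convergence to obtain a single majorant $h\in L^2(\nu)$ along a subsequence. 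You then apply dominated convergence in $\rho$ pointwise in $(\omega,t)$, followed by dominated convergence in $\nu$ with majorant $4(a+C(1+|S|+h))^2$.

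\textbf{Comparison.} Your route is more elementary. It needs neither the product-space lift, nor the step that a continuous map preserves convergence in measure (which in the paper tacitly rests on a subsequence argument anyway), nor Vitali's theorem. You also verify the measurability and adaptedness of $\bar\Phi^\varepsilon$, which the paper leaves implicit. The paper's truncation in (i) has the advantage of being a direct estimate rather than an argument by contradiction. Its Vitali formulation in (ii) makes explicit that uniform integrability of the squared increments, rather than any explicit majorant, is what drives the convergence.
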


\begin{proof}

For (i), let
\[
M:=
\sup_{0<\varepsilon\le1}
\operatorname*{ess\,sup}_{(\omega,t)}
\|A_t^\varepsilon(\omega)\|_{\mathrm{op}}
<\infty.
\]
Fix \(R\) with \(\|R\|_\lambda<\infty\). for any \(K>0\),
\begin{align*}
\|A^\varepsilon R\|_\lambda^2
&=
\int_{\{|R|\le K\}}
|A^\varepsilon R|^2\,d\nu
+
\int_{\{|R|>K\}}
|A^\varepsilon R|^2\,d\nu.
\end{align*}
Using
\[
|A^\varepsilon R|
\le
\|A^\varepsilon\|_{\mathrm{op}}|R|,
\]
we obtain
\begin{align*}
\|A^\varepsilon R\|_\lambda^2
&\le
K^2\|A^\varepsilon\|_\lambda^2
+
M^2
\int_{\{|R|>K\}}|R|^2\,d\nu.
\end{align*}
Since \(\|R\|_\lambda<\infty\), the second term can be made
arbitrarily small by choosing \(K\) sufficiently large. For this fixed
\(K\), the first term tends to $0$ as \(\varepsilon\to0\). Hence
\[
\|A^\varepsilon R\|_\lambda\longrightarrow0\ \ \ {\rm as}\ \varepsilon\to0.
\]

Moreover, if \(\{R^\varepsilon\}_{0<\varepsilon\leq1}\subset L_{\mathcal F}^{2,\lambda}\) and \(R^\varepsilon\longrightarrow R\) in \(L_{\mathcal F}^{2,\lambda}\) as $\varepsilon\to0$, we have
\[\|A^\varepsilon R^\varepsilon\|_{\lambda}^2\le 2M^2\|R^\varepsilon-R\|_{\lambda}^2+2\|A^\varepsilon R\|_{\lambda}^2\longrightarrow0\ \ \ {\rm as}\ \varepsilon\to0.\]

We turn to the proof of \emph{(ii)}. In the finite product measure space
\((\Omega\times\mathbb Z_+\times[0,1],\nu\otimes d\rho)\), for $0\leq\rho\leq1$, set
\[
S_t^{\varepsilon,\rho}
:=S_t+\rho(S_t^\varepsilon-S_t)
\qquad {\rm and}\qquad
G^\varepsilon(t,\rho)
:=\Phi(t,S_t^{\varepsilon,\rho})-\Phi(t,S_t).
\]
Since
\[
\|S^\varepsilon-S\|_\lambda\longrightarrow0,
\]
we have
\[
\int_0^1
\|S^{\varepsilon,\rho}-S\|_\lambda^2\,d\rho
=
\int_0^1\rho^2\,d\rho\,
\|S^\varepsilon-S\|_\lambda^2
\longrightarrow0
\qquad {\rm as}\ \varepsilon\to0.
\]
Thus \(S^{\varepsilon,\rho}\longrightarrow S\) in
\((\nu\otimes d\rho)\)-measure as $\varepsilon\to0$. By the a.s. continuity of
\(\Phi(t,\cdot)\), it follows that
\(
G^\varepsilon\longrightarrow0
\) in \((\nu\otimes d\rho)\)-measure as $\varepsilon\to0$.

Now let $\{\varepsilon_n\}_{n\ge1}$ be an arbitrary sequence such that
$\varepsilon_n\to0$ as $n\to\infty$.
Since as
\[
\|S^{\varepsilon_n}-S\|_\lambda^2
=
\sum_{t=0}^{\infty}
\E\left[
e^{-\lambda t}
|S_t^{\varepsilon_n}-S_t|_{\mathcal X}^2
\right]
\longrightarrow0\ \ \ {\rm as}\ n\to\infty,
\]
we have
\[
|S_t^{\varepsilon_n}-S_t|_{\mathcal X}^2
\longrightarrow0
\qquad\text{in }L^1(\nu)\ \ \ {\rm as}\ n\to\infty.
\]
Hence the sequence
\(
\left\{
|S_t^{\varepsilon_n}-S_t|_{\mathcal X}^2
\right\}_{n\ge1}
\)
is uniformly integrable with respect to \(\nu\).
Moreover, by the linear growth condition,
\begin{align*}
|G^{\varepsilon_n}(t,\rho)|_{\mathcal Y}^2\leq
C\left(
a_t^2+1
+|S_t^{\varepsilon_n,\rho}|_{\mathcal X}^2
+|S_t|_{\mathcal X}^2
\right)\leq
C\left(
a_t^2+1
+|S_t|_{\mathcal X}^2
+|S_t^{\varepsilon_n}-S_t|_{\mathcal X}^2
\right),
\end{align*}
where \(C>0\) is independent of \(n\) and \(\rho\).
Since \(a^2\), \(1\), and \(|S_t|_{\mathcal X}^2\) are
\(\nu\)-integrable, and
\(
\left\{
|S_t^{\varepsilon_n}-S_t|_{\mathcal X}^2
\right\}_{n\in\mathbb{N}}
\)
is uniformly integrable with respect to \(\nu\), while the
right-hand side of the above estimate is independent of \(\rho\),
the same uniform-integrability property is preserved after lifting
these functions to
\((\Omega\times\mathbb Z_+\times[0,1],\nu\otimes d\rho)\).
Therefore,
\(
\left\{
|G^{\varepsilon_n}|_{\mathcal Y}^2
\right\}_{n\in\mathbb{N}}
\)
is uniformly integrable with respect to
\(\nu\otimes d\rho\).


Since \(G^{\varepsilon_n}\longrightarrow0\) in
\((\nu\otimes d\rho)\)-measure  as \(n\to \infty\), we also have
\(
|G^{\varepsilon_n}|_{\mathcal Y}^2\longrightarrow0
\)
in \((\nu\otimes d\rho)\)-measure  as \(n\to\infty\). Therefore, by Vitali's theorem,
\[
\|G^{\varepsilon_n}\|_{L^2(\nu\otimes d\rho)}^2
=
\int_0^1
\sum_{t=0}^{\infty}
\E\left[
e^{-\lambda t}
|G^{\varepsilon_n}(t,\rho)|_{\mathcal Y}^2
\right]d\rho
\longrightarrow0,\ \ \  {\rm as}\ n\to\infty.
\]

Since $\{\varepsilon_n\}_{n\ge1}$ was arbitrary, we conclude that
\(
\|G^\varepsilon\|_{L^2(\nu\otimes d\rho)}^2
\longrightarrow0
\qquad\text{as }\varepsilon\to0.
\)

Finally, by the triangle inequality and the Cauchy--Schwarz
inequality on \([0,1]\),
\begin{align*}
|\bar\Phi^\varepsilon(t)-\Phi(t,S_t)|_{\mathcal Y}^2
=
\left|
\int_0^1
G^\varepsilon(t,\rho)\,d\rho
\right|_{\mathcal Y}^2\leq
\left(
\int_0^1
|G^\varepsilon(t,\rho)|_{\mathcal Y}\,d\rho
\right)^2\leq
\int_0^1
|G^\varepsilon(t,\rho)|_{\mathcal Y}^2\,d\rho.
\end{align*}
Therefore, as \(\varepsilon\to0\),
\begin{align*}
\|\bar\Phi^\varepsilon-\Phi(\cdot,S)\|_\lambda^2
&=
\sum_{t=0}^{\infty}
\E\left[
e^{-\lambda t}
|\bar\Phi^\varepsilon(t)-\Phi(t,S_t)|_{\mathcal Y}^2
\right]\\&\leq
\int_0^1
\sum_{t=0}^{\infty}
\E\left[
e^{-\lambda t}
|G^\varepsilon(t,\rho)|_{\mathcal Y}^2
\right]d\rho=
\|G^\varepsilon\|_{L^2(\nu\otimes d\rho)}^2
\longrightarrow0,
\end{align*}
which proves the desired convergence.

\end{proof}

In the rest of this section, for \(\bar u, u\in\Uad\), we set \(v=u-\bar u\) and \(u^\varepsilon=\bar u+\varepsilon v\), \(0<\varepsilon\leq1\). Then we denote by \((\bar X,\bar Y,\bar Z,\bar N)\) and \((X^\varepsilon,Y^\varepsilon,Z^\varepsilon,N^\varepsilon)\) the states corresponding to controls $\bar u$ and $u^\varepsilon$, respectively.

\begin{lemma}[Stability under control perturbations]\label{lem:state-stability}
Under Assumptions~\ref{ass:control-state}--\ref{ass:control-origin}, there exists \(C>0\) such that
\[
\sum_{t=0}^{\infty}\E\left[e^{-\lambda t}
\left(|X_t^\varepsilon-\bar X_t|^2+|Y_t^\varepsilon-\bar Y_t|^2+|Z_t^\varepsilon-\bar Z_t|^2\right)\right]
\leq
C\varepsilon^2\sum_{t=0}^{\infty}\E[e^{-\lambda t}|v_t|^2].
\]
\end{lemma}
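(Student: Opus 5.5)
The plan is to write the difference of the two state systems as a nonhomogeneous FBS\(\Delta\)E of the form \eqref{eq:nonhomogeneous-fbsde} with zero initial value, and then invoke \cref{cor:nonhomogeneous-stability}. Set \(\hat X:=X^\varepsilon-\bar X\), \(\hat Y:=Y^\varepsilon-\bar Y\), \(\hat Z:=Z^\varepsilon-\bar Z\), \(\hat N:=N^\varepsilon-\bar N\). By \cref{rem:any-control-origin} and \cref{thm:fbsde-solvability}, both states lie in \(L^{2,\lambda}_{\F}\), so all differences are in the correct spaces and \(\hat X_0=0\).

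First I will define the frozen-control coefficients
\[
\mathcal B(t,x,y,z):=b(t,\bar X_t+x,\bar Y_t+y,\bar Z_t+z,\bar u_t)-b(t,\bar X_t,\bar Y_t,\bar Z_t,\bar u_t),
\]
together with \(\Sigma^0\) (built from \(\sigma\)) and \(\mathcal G\) (built from \(f\), evaluated at \(\bar u_{t+1}\)) in the same way. These vanish at the origin, they are adapted and jointly measurable, and they inherit the Lipschitz constants \(L^x,L^y,L^z\) of \(b,\sigma,f\). The control perturbation goes into the nonhomogeneous terms:
\[
\varphi_t:=b(t,X^\varepsilon_t,Y^\varepsilon_t,Z^\varepsilon_t,u^\varepsilon_t)-b(t,X^\varepsilon_t,Y^\varepsilon_t,Z^\varepsilon_t,\bar u_t),
\]
with \(\psi_t\) defined analogously from \(\sigma\), and \(\theta_{t+1}\) from \(f\) at time \(t+1\). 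A direct subtraction then shows that \((\hat X,\hat Y,\hat Z,\hat N)\) solves \eqref{eq:nonhomogeneous-fbsde}. The control Lipschitz bounds give \(|\varphi_t|\le L_b^u\varepsilon|v_t|\), \(|\psi_t|\le L_\sigma^u\varepsilon|v_t|\), and \(|\theta_t|\le L_f^u\varepsilon|v_t|\). Since \(v=u-\bar u\in L^{2,\lambda}_{\F}\), all three terms belong to \(L^{2,\lambda}_{\F}\), and \(\|\varphi\|_\lambda^2+\|\psi\|_\lambda^2+\|\theta\|_\lambda^2\le C\varepsilon^2\|v\|_\lambda^2\).

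The main obstacle is checking that the discount hypothesis \cref{ass:fbsde-discount} required by \cref{cor:nonhomogeneous-stability} follows from \cref{ass:control-discount}. The frozen coefficients have the same constants, so I need
\[
e^\lambda>\tfrac12\Bigl(\mathcal P+\mathcal S+\sqrt{(\mathcal P-\mathcal S)^2+4\mathcal Q\mathcal R}\Bigr).
\]
The comparison is entrywise: \(B_{yz}\le B_c\), \(\Sigma_{yz}\le\Sigma_c\), \(1+L_b^x\le 1+L_b^x+L_f^y\), and \(L_\sigma^x\le L_\sigma^x+L_f^z\). These give \(\mathcal P_0\le\mathcal P_{0,c}\), \(\mathcal Q_0\le\mathcal Q_{0,c}\), and \(\mathcal K\le\mathcal K_c\), hence \(\mathcal P\le\mathcal P_c\) and \(\mathcal Q\le\mathcal Q_c\). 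Next, \(G=\sqrt{(1+L_f^y)^2+(L_f^z)^2}\le G_c\) and \(F_x=F_c\), so \(\mathcal R\le\mathcal R_c\) and \(\mathcal S\le\mathcal S_c\).

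The threshold is the spectral radius (Perron root) of the nonnegative matrix \(\begin{psmallmatrix}\mathcal P&\mathcal Q\\ \mathcal R&\mathcal S\end{psmallmatrix}\), and the Perron root is monotone in the entries. The threshold therefore does not exceed the control threshold in \eqref{eq:control-global-discount}. Should some subtlety arise here, I would fall back on redoing the estimates of \cref{cor:nonhomogeneous-stability} directly with the larger constants, which still yields a valid bound.

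Applying \eqref{eq:nonhomogeneous-stability-estimate} then gives
\[
\|\hat X\|_\lambda^2+\|\hat Y\|_\lambda^2+\|\hat Z\|_\lambda^2\le C(\|\varphi\|_\lambda^2+\|\psi\|_\lambda^2+\|\theta\|_\lambda^2)\le C\varepsilon^2\|v\|_\lambda^2.
\]
Here \(C\) depends only on \(\lambda\) and the Lipschitz constants, so it is independent of \(\varepsilon\), \(u\) and \(\bar u\), which is the claim.
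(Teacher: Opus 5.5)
Your proposal is correct and follows the paper's proof. You use the same splitting into frozen-control homogeneous maps that vanish at the origin, plus control-perturbation forcing terms bounded by $L^u\varepsilon|v_t|$, and then apply Corollary~\ref{cor:nonhomogeneous-stability}. Your entrywise comparison, using monotonicity of the Perron root, shows that the control discount condition \eqref{eq:control-global-discount} implies the state threshold \eqref{eq:fbsde-new-discount}; this fills in a step the paper leaves implicit.
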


\begin{proof}
Set
\[
\widehat X_t=X_t^\varepsilon-\bar X_t,
\qquad
\widehat Y_t=Y_t^\varepsilon-\bar Y_t,
\qquad
\widehat Z_t=Z_t^\varepsilon-\bar Z_t,
\qquad
\Delta\widehat N_t=\Delta N_t^\varepsilon-\Delta\bar N_t.
\]
Then the controlled system is written as
\begin{equation}\label{eq:state-stability-difference-forward}
\widehat X_{t+1}
=\widehat X_t+\widehat b_t+\widehat\sigma_t\Delta W_t,
\end{equation}
and
\begin{equation}\label{eq:state-stability-difference-backward}
\widehat Y_t+\widehat Z_t\Delta W_t+\Delta\widehat N_t
=e^{-\lambda}\bigl(\widehat Y_{t+1}+\widehat f_{t+1}\bigr),
\end{equation}
where
\[
\widehat b_t=b(t,X_t^\varepsilon,Y_t^\varepsilon,Z_t^\varepsilon,u_t^\varepsilon)
-b(t,\bar X_t,\bar Y_t,\bar Z_t,\bar u_t),
\]
and analogous definitions are used for \(\widehat\sigma_t\) and \(\widehat f_t\).

We split each coefficient difference into a homogeneous state part and a pure control perturbation. For this, define
\[
\mathcal B_t(\widehat x,\widehat y,\widehat z)
:={}b(t,\bar X_t+\widehat x,\bar Y_t+\widehat y,\bar Z_t+\widehat z,\bar u_t)
-b(t,\bar X_t,\bar Y_t,\bar Z_t,\bar u_t)
\]
and
\[
\varphi_t^\varepsilon
:={}b(t,X_t^\varepsilon,Y_t^\varepsilon,Z_t^\varepsilon,u_t^\varepsilon)
-b(t,X_t^\varepsilon,Y_t^\varepsilon,Z_t^\varepsilon,\bar u_t),
\]
and in a similar way, define \(\Sigma^0_t\), \(\psi_t^\varepsilon\) and \(\mathcal G_t\), $\theta_t^\varepsilon$. Then
\[\widehat b_t=\mathcal B_t(\widehat X_t,\widehat Y_t,\widehat Z_t)+\varphi_t^\varepsilon,
\qquad
\widehat\sigma_t=\Sigma^0_t(\widehat X_t,\widehat Y_t,\widehat Z_t)+\psi_t^\varepsilon,
\qquad
\widehat f_t=\mathcal G_t(\widehat X_t,\widehat Y_t,\widehat Z_t)+\theta_t^\varepsilon.
\]
The maps \(\mathcal B_t,\Sigma^0_t,\mathcal G_t\) vanish at the origin and satisfy the same state Lipschitz bounds as \(b,\sigma,f\). Moreover, by the Lipschitz condition,
\[
|\varphi_t^\varepsilon|\leq L_b^u\varepsilon |v_t|,
\qquad
|\psi_t^\varepsilon|\leq L_\sigma^u\varepsilon |v_t|,
\qquad
|\theta_t^\varepsilon|\leq L_f^u\varepsilon |v_t|.
\]
Thus
\[
\|\varphi^\varepsilon\|_\lambda^2+\|\psi^\varepsilon\|_\lambda^2+\|\theta^\varepsilon\|_\lambda^2
\leq C\varepsilon^2\|v\|_\lambda^2.
\]
Equations \eqref{eq:state-stability-difference-forward}--\eqref{eq:state-stability-difference-backward} are exactly of the form \eqref{eq:nonhomogeneous-fbsde} with $0$ initial condition. Applying Corollary~\ref{cor:nonhomogeneous-stability}, we have
\[
\|\widehat X\|_\lambda^2+\|\widehat Y\|_\lambda^2+\|\widehat Z\|_\lambda^2
\leq C\varepsilon^2\|v\|_\lambda^2.
\]
\end{proof}

\subsection{Variational equation and first-order expansion}

Consider the first-order variational equation
\begin{equation}\label{eq:variational}
\left\{
\begin{aligned}
&\xi_{t+1}
={}\xi_t+b_x(t)\xi_t+b_y(t)\eta_t+b_z(t)\zeta_t+b_u(t)v_t+\bigl(\sigma_x(t)\xi_t+\sigma_y(t)\eta_t+\sigma_z(t)\zeta_t+\sigma_u(t)v_t\bigr)\Delta W_t,\\
&\eta_t+\zeta_t\Delta W_t+\Delta Q_t
={}e^{-\lambda}\bigl(\eta_{t+1}+f_x(t+1)\xi_{t+1}+f_y(t+1)\eta_{t+1}+f_z(t+1)\zeta_{t+1}+f_u(t+1)v_{t+1}\bigr),\\
&\xi_0={}0,
\end{aligned}
\right.
\end{equation}
where, for \(g=b,\sigma\), the derivatives \(g_\mu(t)\), \(\mu=x,y,z,u\), are evaluated at \((\bar X_t,\bar Y_t,\bar Z_t,\bar u_t)\), and the derivatives \(f_\mu(t+1)\) are evaluated at \((\bar X_{t+1},\bar Y_{t+1},\bar Z_{t+1},\bar u_{t+1})\).

\begin{lemma}[Solvability of the variational equation]\label{lem:var-solvability}
Under Assumptions~\ref{ass:control-state}--\ref{ass:control-origin}, the first-order variational equation \eqref{eq:variational} admits a unique solution
$(\xi,\eta,\zeta,Q)$.
Moreover, the solution satisfies the stronger summability property
\[
\sum_{t=0}^{\infty}\E\left[e^{-\lambda t}(|\xi_t|^2+|\eta_t|^2+|\zeta_t|^2)\right]
\leq
C\sum_{t=0}^{\infty}\E[e^{-\lambda t}|v_t|^2].
\]
\end{lemma}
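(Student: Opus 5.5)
The plan is to recognize \eqref{eq:variational} as a special case of the nonhomogeneous system \eqref{eq:nonhomogeneous-fbsde} and then apply \cref{cor:nonhomogeneous-stability}. Set
\[
\mathcal B_t(x,y,z):=b_x(t)x+b_y(t)y+b_z(t)z,\qquad
\Sigma^0_t(x,y,z):=\sigma_x(t)x+\sigma_y(t)y+\sigma_z(t)z,
\]
\[
\mathcal G_t(x,y,z):=f_x(t)x+f_y(t)y+f_z(t)z,
\]
and take the inhomogeneous terms
\[
\varphi_t:=b_u(t)v_t,\qquad \psi_t:=\sigma_u(t)v_t,\qquad \theta_t:=f_u(t)v_t.
\]
With $\xi_0=0$, the variational equation is exactly \eqref{eq:nonhomogeneous-fbsde}. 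These maps are linear, so they vanish at the origin. They are jointly measurable and adapted, since the derivatives are evaluated along the adapted optimal trajectory and continuous differentiability preserves measurability.

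The first step is to transfer the Lipschitz bounds. Because $b,\sigma,f$ are $C^1$ and Lipschitz with the constants of \cref{ass:control-state}, each partial derivative satisfies an operator-norm bound, e.g.\ $\|b_x(t)\|_{\mathrm{op}}\le L_b^x$, $\|b_u(t)\|_{\mathrm{op}}\le L_b^u$. This follows by taking the difference quotient in one variable while freezing the others; for $u$, one uses directions inside the convex set $U$, or assumes the derivative extends as in the paper's convention. Hence $\mathcal B,\Sigma^0,\mathcal G$ satisfy \cref{ass:fbsde-coeff} with the same constants $L_b^x,\dots,L_f^z$. Moreover,
\[
\|\varphi\|_\lambda^2+\|\psi\|_\lambda^2+\|\theta\|_\lambda^2\le C\|v\|_\lambda^2<\infty,
\]
because $v=u-\bar u\in L^{2,\lambda}_{\F}$.

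The second step, and the only real obstacle, is verifying the discount condition \cref{ass:fbsde-discount} for these linear coefficients, given that only \cref{ass:control-discount} is assumed. I would check monotonicity. The threshold $\frac{\mathcal P+\mathcal S+\sqrt{(\mathcal P-\mathcal S)^2+4\mathcal Q\mathcal R}}{2}$ is the Perron root of $\begin{psmallmatrix}\mathcal P&\mathcal Q\\ \mathcal R&\mathcal S\end{psmallmatrix}$, which is nondecreasing in each nonnegative entry. So it suffices to show entrywise $\mathcal P\le\mathcal P_c$, $\mathcal Q\le\mathcal Q_c$, $\mathcal R\le\mathcal R_c$, $\mathcal S\le\mathcal S_c$. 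This follows from:
\begin{itemize}
\item $1+L_b^x\le 1+L_b^x+L_f^y$ and $L_\sigma^x\le L_\sigma^x+L_f^z$;
\item $B_{yz}\le B_c$ and $\Sigma_{yz}\le\Sigma_c$, since $\Sigma_c^2\ge (L_\sigma^y)^2+(L_\sigma^z)^2$;
\item $G^2=(1+L_f^y)^2+(L_f^z)^2\le G_c^2$;
\item $F_x=F_c$.
\end{itemize}
Together these give $\mathcal P_0\le\mathcal P_{0,c}$, $\mathcal Q_0\le\mathcal Q_{0,c}$, $\mathcal K\le\mathcal K_c$, $\mathcal R\le\mathcal R_c$ and $\mathcal S\le\mathcal S_c$. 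Therefore \eqref{eq:control-global-discount} implies \eqref{eq:fbsde-new-discount}. The paper built $\mathcal P_c,\dots$ to also cover the adjoint arrangement, so here the inequalities only need to go one way.

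Finally, \cref{cor:nonhomogeneous-stability} yields existence and uniqueness of $(\xi,\eta,\zeta,Q)$ in the solution class, together with $(\eta,\zeta,\Delta Q)\in L^{2,\lambda}_{\F}$. It also gives the estimate
\[
\|\xi\|_\lambda^2+\|\eta\|_\lambda^2+\|\zeta\|_\lambda^2\le C\bigl(\|\varphi\|_\lambda^2+\|\psi\|_\lambda^2+\|\theta\|_\lambda^2\bigr)\le C\|v\|_\lambda^2,
\]
which is the claimed bound.
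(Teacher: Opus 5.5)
Your proposal is correct and follows essentially the same route as the paper: the paper also views \eqref{eq:variational} as an instance of \eqref{eq:nonhomogeneous-fbsde}, with the linear derivative maps as the homogeneous part and $b_u(t)v_t,\ \sigma_u(t)v_t,\ f_u(t)v_t$ as forcing terms, and then invokes \cref{cor:nonhomogeneous-stability}. The paper leaves the verification of \cref{ass:fbsde-discount} implicit, and your entrywise comparison $\mathcal P\le\mathcal P_c$, $\mathcal Q\le\mathcal Q_c$, $\mathcal R\le\mathcal R_c$, $\mathcal S\le\mathcal S_c$, combined with monotonicity of the Perron root, supplies that step.
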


\begin{proof}
The variational equation is a linear nonhomogeneous FBS\(\Delta\)E. Its homogeneous coefficient maps are
\begin{align*}
&(\xi,\eta,\zeta)\longmapsto b_x(t)\xi+b_y(t)\eta+b_z(t)\zeta,\\
&(\xi,\eta,\zeta)\longmapsto \sigma_x(t)\xi+\sigma_y(t)\eta+\sigma_z(t)\zeta,\\
&(\xi,\eta,\zeta)\longmapsto f_x(t)\xi+f_y(t)\eta+f_z(t)\zeta.
\end{align*}
These maps all vanish at the origin. Since the first derivatives of \(b,\sigma,f\) are uniformly bounded by the Lipschitz constants in Assumption~\ref{ass:control-state}, the homogeneous part satisfies exactly the assumptions in Corollary~\ref{cor:nonhomogeneous-stability}. For the nonhomogeneous terms
$\varphi_t=b_u(t)v_t$,
$\psi_t=\sigma_u(t)v_t$ and
$\theta_t=f_u(t)v_t$.
Since \(b_u,\sigma_u,f_u\) are uniformly bounded,
\[
\|\varphi\|_\lambda^2+\|\psi\|_\lambda^2+\|\theta\|_\lambda^2
\leq C\|v\|_\lambda^2.
\]
Thus, by Corollary~\ref{cor:nonhomogeneous-stability}, we have the existence and uniqueness of solution and the estimate
\[
\|\xi\|_\lambda^2+\|\eta\|_\lambda^2+\|\zeta\|_\lambda^2
\leq C\|v\|_\lambda^2.
\]
\end{proof}

\begin{lemma}[Remainder estimate]\label{lem:remainder}
Under Assumptions~\ref{ass:control-state}--\ref{ass:control-diff-cost}, if \((\xi,\eta,\zeta,Q)\) is the solution to FBS\(\Delta\)E \eqref{eq:variational}, then
\begin{align*}
\lim_{\varepsilon\downarrow0}
\sum_{t=0}^{\infty}\E\biggl[e^{-\lambda t}\biggl(
\left|\frac{X_t^\varepsilon-\bar X_t}{\varepsilon}-\xi_t\right|^2
+\left|\frac{Y_t^\varepsilon-\bar Y_t}{\varepsilon}-\eta_t\right|^2+\left|\frac{Z_t^\varepsilon-\bar Z_t}{\varepsilon}-\zeta_t\right|^2
\biggr)\biggr]=0,
\end{align*}
or equivalently,
\[
X^\varepsilon-\bar X=\varepsilon\xi+o(\varepsilon),\qquad
Y^\varepsilon-\bar Y=\varepsilon\eta+o(\varepsilon),\qquad
Z^\varepsilon-\bar Z=\varepsilon\zeta+o(\varepsilon)
\]
in $L^{2,\lambda}_{\F}$ space.
\end{lemma}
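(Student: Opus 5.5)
We set
\[
\tilde X^\varepsilon_t:=\frac{X_t^\varepsilon-\bar X_t}{\varepsilon}-\xi_t,\qquad
\tilde Y^\varepsilon_t:=\frac{Y_t^\varepsilon-\bar Y_t}{\varepsilon}-\eta_t,\qquad
\tilde Z^\varepsilon_t:=\frac{Z_t^\varepsilon-\bar Z_t}{\varepsilon}-\zeta_t,
\]
and $\Delta\tilde N^\varepsilon_t:=(\Delta N^\varepsilon_t-\Delta\bar N_t)/\varepsilon-\Delta Q_t$. The aim is to show that $(\tilde X^\varepsilon,\tilde Y^\varepsilon,\tilde Z^\varepsilon,\tilde N^\varepsilon)$ solves a linear nonhomogeneous FBS$\Delta$E of the form \eqref{eq:nonhomogeneous-fbsde} with zero initial value. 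Its homogeneous coefficients are the $\varepsilon$-dependent averaged derivatives, and its forcing terms tend to $0$ in $L^{2,\lambda}$. Corollary~\ref{cor:nonhomogeneous-stability} then gives the conclusion.

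\emph{Step 1: exact linearization.} Write $\Theta_t:=(\bar X_t,\bar Y_t,\bar Z_t,\bar u_t)$ and $\Theta^\varepsilon_t:=(X^\varepsilon_t,Y^\varepsilon_t,Z^\varepsilon_t,u^\varepsilon_t)$. Since $b,\sigma,f$ are $C^1$, the mean value theorem in integral form gives
\[
b(t,\Theta^\varepsilon_t)-b(t,\Theta_t)=\bar b^\varepsilon_x(t)\widehat X_t+\bar b^\varepsilon_y(t)\widehat Y_t+\bar b^\varepsilon_z(t)\widehat Z_t+\varepsilon\bar b^\varepsilon_u(t)v_t,
\]
with $\bar b^\varepsilon_\mu(t):=\int_0^1 b_\mu(t,\Theta_t+\rho(\Theta^\varepsilon_t-\Theta_t))\,d\rho$. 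The same holds for $\sigma$ and $f$. Dividing by $\varepsilon$ and subtracting \eqref{eq:variational} shows that the forward part of $\tilde X^\varepsilon$ is
\[
\tilde X^\varepsilon_{t+1}=\tilde X^\varepsilon_t+\bar b^\varepsilon_x\tilde X^\varepsilon_t+\bar b^\varepsilon_y\tilde Y^\varepsilon_t+\bar b^\varepsilon_z\tilde Z^\varepsilon_t+\varphi^\varepsilon_t+(\cdots)\Delta W_t .
\]
Here
\[
\varphi^\varepsilon_t=(\bar b^\varepsilon_x-b_x)(t)\xi_t+(\bar b^\varepsilon_y-b_y)(t)\eta_t+(\bar b^\varepsilon_z-b_z)(t)\zeta_t+(\bar b^\varepsilon_u-b_u)(t)v_t,
\]
and $\psi^\varepsilon$ and $\theta^\varepsilon$ are defined analogously. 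The backward equation has the same structure, with $\theta^\varepsilon_{t+1}$ appearing inside the $e^{-\lambda}(\cdot)$ bracket. The homogeneous maps $(x,y,z)\mapsto\bar b^\varepsilon_x x+\bar b^\varepsilon_y y+\bar b^\varepsilon_z z$ (and likewise for $\sigma$, $f$) vanish at the origin. Since every derivative is bounded by the corresponding Lipschitz constant, they satisfy Assumption~\ref{ass:fbsde-coeff} with the same constants. Assumption~\ref{ass:fbsde-discount} is then inherited, exactly as in Lemma~\ref{lem:var-solvability}. Corollary~\ref{cor:nonhomogeneous-stability} therefore gives
\[
\|\tilde X^\varepsilon\|_\lambda^2+\|\tilde Y^\varepsilon\|_\lambda^2+\|\tilde Z^\varepsilon\|_\lambda^2\le C\bigl(\|\varphi^\varepsilon\|_\lambda^2+\|\psi^\varepsilon\|_\lambda^2+\|\theta^\varepsilon\|_\lambda^2\bigr),
\]
where $C$ does not depend on $\varepsilon$. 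This uniformity is essential and holds because $C$ depends only on $\lambda$ and the Lipschitz constants.

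\emph{Step 2: forcing terms vanish.} This is the main point. Each forcing term is a sum of products $A^\varepsilon R$, where $A^\varepsilon=\bar g^\varepsilon_\mu-g_\mu$ is uniformly bounded in operator norm (by twice the Lipschitz constant) and $R\in\{\xi,\eta,\zeta,v\}$ lies in $L^{2,\lambda}$ by Lemma~\ref{lem:var-solvability}. By Lemma~\ref{lem:controlled-convergence}(i), it suffices to show $\|A^\varepsilon\|_\lambda\to0$. For this, apply Lemma~\ref{lem:controlled-convergence}(ii) with $S:=\Theta$, $S^\varepsilon:=\Theta^\varepsilon$ and $\Phi(t,s):=g_\mu(t,s)$. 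The map $\Phi$ is continuous in $s$ and bounded, so the growth condition holds with $a\equiv0$. The required convergence $S^\varepsilon\to S$ in $L^{2,\lambda}$ follows from Lemma~\ref{lem:state-stability}, which gives an $O(\varepsilon)$ bound, together with $\|u^\varepsilon-\bar u\|_\lambda=\varepsilon\|v\|_\lambda$. Lemma~\ref{lem:controlled-convergence}(ii) then yields $\|\bar g^\varepsilon_\mu-g_\mu(\cdot,\Theta)\|_\lambda\to0$.

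Two points need care. First, $\theta^\varepsilon$ is evaluated at time $t+1$, which is harmless after an index shift in the weighted sum. Second, $\Theta$ takes values in $\R^m\times\R^n\times\R^{n\times d}\times U$ rather than a full vector space. This is handled because $U$ is convex, so every intermediate point $\Theta_t+\rho(\Theta^\varepsilon_t-\Theta_t)$ stays in the domain.

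Combining Steps 1 and 2, the right-hand side of the stability estimate tends to $0$, which proves the lemma. The main obstacle I expect is verifying that Corollary~\ref{cor:nonhomogeneous-stability} applies with an $\varepsilon$-uniform constant to the random, $\varepsilon$-dependent averaged coefficients. I would handle it by observing that the corollary's proof uses only the pointwise Lipschitz bounds, never the specific form of the coefficients.
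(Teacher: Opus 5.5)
Your proposal is correct and follows essentially the same route as the paper. You write $(\tilde X^\varepsilon,\tilde Y^\varepsilon,\tilde Z^\varepsilon,\tilde N^\varepsilon)$ as the solution of a linear nonhomogeneous FBS$\Delta$E whose coefficients are the averaged derivatives, show that the forcing terms vanish via Lemma~\ref{lem:controlled-convergence}(ii) followed by (i), and conclude with the $\varepsilon$-uniform constant of Corollary~\ref{cor:nonhomogeneous-stability}. Your explicit remarks on why that constant is independent of $\varepsilon$, and on how convexity of $U$ keeps the intermediate points in the domain, make precise points the paper uses only implicitly.
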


\begin{proof}
For $t\geq0$, set
\[
R_t^{X,\varepsilon}:=\frac{X_t^\varepsilon-\bar X_t}{\varepsilon},
\qquad
R_t^{Y,\varepsilon}:=\frac{Y_t^\varepsilon-\bar Y_t}{\varepsilon},
\qquad
R_t^{Z,\varepsilon}:=\frac{Z_t^\varepsilon-\bar Z_t}{\varepsilon},
\]
and
\begin{equation}\label{cz3}
\widetilde X_t^\varepsilon:=R_t^{X,\varepsilon}-\xi_t,
\ \ \
\widetilde Y_t^\varepsilon:=R_t^{Y,\varepsilon}-\eta_t,
\ \ \
\widetilde Z_t^\varepsilon:=R_t^{Z,\varepsilon}-\zeta_t,
\ \ \
\Delta\widetilde N_t^\varepsilon
:=\frac{\Delta N_t^\varepsilon-\Delta\bar N_t}{\varepsilon}-\Delta Q_t.
\end{equation}
By Lemma~\ref{lem:state-stability},
\[
\|R^{X,\varepsilon}\|_\lambda+
\|R^{Y,\varepsilon}\|_\lambda+
\|R^{Z,\varepsilon}\|_\lambda
\le C\|v\|_\lambda,
\]
and as $\varepsilon\to0$,
\[
(X^\varepsilon,Y^\varepsilon,Z^\varepsilon,u^\varepsilon)
\longrightarrow
(\bar X,\bar Y,\bar Z,\bar u)
\quad\text{in }L^{2,\lambda}.
\]
To show the convergence of the averaged derivatives more clear, for $t\geq0$, write
\[
S_t:=(\bar X_t,\bar Y_t,\bar Z_t,\bar u_t),
\qquad
S_t^\varepsilon:=(X_t^\varepsilon,Y_t^\varepsilon,Z_t^\varepsilon,u_t^\varepsilon),
\]
and, for \(s\in[0,1]\), set
\[
S_t^{\varepsilon,s}:=S_t+s(S_t^\varepsilon-S_t).
\]
Then
\[
\int_0^1\|S^{\varepsilon,s}-S\|_{\lambda}^2\,ds
\leq
\|S^\varepsilon-S\|_{\lambda}^2\longrightarrow0\ \ \ {\rm as}\ \varepsilon\to0.
\]
Hence \(S^{\varepsilon,s}\longrightarrow S\) in measure as $\varepsilon\to0$ in the product space
\(\Omega\times\mathbb Z_+\times[0,1]\).

For \(g=b,\sigma\) and \(\mu=x,y,z,u\), define averaged derivatives
\begin{align*}
\bar g_\mu^\varepsilon(t)
:=\int_0^1 g_\mu\bigl(t,\bar X_t+s(X_t^\varepsilon-\bar X_t),
\bar Y_t+s(Y_t^\varepsilon-\bar Y_t),
\bar Z_t+s(Z_t^\varepsilon-\bar Z_t),
\bar u_t+s(u_t^\varepsilon-\bar u_t)\bigr)\,ds,
\end{align*}
and we have
\begin{align*}
\frac{b(t,X_t^\varepsilon,Y_t^\varepsilon,Z_t^\varepsilon,u_t^\varepsilon)
-b(t,\bar X_t,\bar Y_t,\bar Z_t,\bar u_t)}{\varepsilon}=\bar b_x^\varepsilon(t)R_t^{X,\varepsilon}
+\bar b_y^\varepsilon(t)R_t^{Y,\varepsilon}
+\bar b_z^\varepsilon(t)R_t^{Z,\varepsilon}
+\bar b_u^\varepsilon(t)v_t.
\end{align*}
Subtracting the variational equation, by \eqref{cz3}, we have
\begin{align*}
\widetilde X_{t+1}^\varepsilon
={}&\widetilde X_t^\varepsilon+\bar b_x^\varepsilon(t)\widetilde X_t^\varepsilon
+\bar b_y^\varepsilon(t)\widetilde Y_t^\varepsilon
+\bar b_z^\varepsilon(t)\widetilde Z_t^\varepsilon+\rho_t^{b,\varepsilon}+\bigl(\bar\sigma_x^\varepsilon(t)\widetilde X_t^\varepsilon
+\bar\sigma_y^\varepsilon(t)\widetilde Y_t^\varepsilon
+\bar\sigma_z^\varepsilon(t)\widetilde Z_t^\varepsilon
+\rho_t^{\sigma,\varepsilon}\bigr)\Delta W_t,
\end{align*}
where
\[
\rho_t^{b,\varepsilon}
=(\bar b_x^\varepsilon(t)-b_x(t))\xi_t
+(\bar b_y^\varepsilon(t)-b_y(t))\eta_t
+(\bar b_z^\varepsilon(t)-b_z(t))\zeta_t
+(\bar b_u^\varepsilon(t)-b_u(t))v_t,
\]
and \(\rho_t^{\sigma,\varepsilon}\) is defined analogously. Similarly, for \(\mu=x,y,z,u\), define \(\bar f_\mu^\varepsilon(t+1)\) along the time \(t+1\) segment, and we have
\begin{align*}
\widetilde Y_t^\varepsilon+\widetilde Z_t^\varepsilon\Delta W_t+\Delta\widetilde N_t^\varepsilon
=e^{-\lambda}\bigl(&\widetilde Y_{t+1}^\varepsilon
+\bar f_x^\varepsilon(t+1)\widetilde X_{t+1}^\varepsilon
+\bar f_y^\varepsilon(t+1)\widetilde Y_{t+1}^\varepsilon+\bar f_z^\varepsilon(t+1)\widetilde Z_{t+1}^\varepsilon
+\rho_{t+1}^{f,\varepsilon}\bigr),
\end{align*}
where
\begin{align*}
\rho_{t+1}^{f,\varepsilon}
={}&(\bar f_x^\varepsilon(t+1)-f_x(t+1))\xi_{t+1}
+(\bar f_y^\varepsilon(t+1)-f_y(t+1))\eta_{t+1}\\
&+(\bar f_z^\varepsilon(t+1)-f_z(t+1))\zeta_{t+1}
+(\bar f_u^\varepsilon(t+1)-f_u(t+1))v_{t+1}.
\end{align*}

For \(g=b,\sigma,f\) and \(\mu=x,y,z,u\), apply
Lemma~\ref{lem:controlled-convergence} (ii) with
\[
S_t=(\bar X_t,\bar Y_t,\bar Z_t,\bar u_t),
\ \ \
S_t^\varepsilon=(X_t^\varepsilon,Y_t^\varepsilon,
Z_t^\varepsilon,u_t^\varepsilon),\ \ \ \Phi=g_\mu.
\]
Since the derivatives of \(b,\sigma,f\) are a.s.
continuous and uniformly bounded, the conditions of
Lemma~\ref{lem:controlled-convergence} (ii) are satisfied. Hence
\[
\|\bar g_\mu^\varepsilon-g_\mu\|_{\lambda}^2
\longrightarrow0\ \ \ {\rm as}\ \varepsilon\to0.
\]
and
\[
\sup_{0<\varepsilon\le1}
\operatorname*{ess\,sup}_{(\omega,t)}
\|\bar g_\mu^\varepsilon(t)-g_\mu(t)\|_{\mathrm{op}}
<\infty.
\]

For instance, taking
\[
A^\varepsilon=\bar b_x^\varepsilon-b_x\ \ \ {\rm and}\ \ \
R=\xi
\]
in Lemma~\ref{lem:controlled-convergence} (i), we obtain
\[
\|(\bar b_x^\varepsilon-b_x)\xi\|_{\lambda}^2
\longrightarrow0\ \ \ {\rm as}\ \varepsilon\to0.
\]
All the other terms in
\(\rho^{b,\varepsilon}\), \(\rho^{\sigma,\varepsilon}\), and
\(\rho^{f,\varepsilon}\) can be dealt with in a similar way. Hence
\[
\|\rho^{b,\varepsilon}\|_{\lambda}^2
+
\|\rho^{\sigma,\varepsilon}\|_{\lambda}^2
+
\sum_{s=1}^{\infty}
\E\!\left[
e^{-\lambda s}|\rho_s^{f,\varepsilon}|^2
\right]
\longrightarrow0\ \ \ {\rm as}\ \varepsilon\to0.
\]
Since
\[
\sum_{t=0}^{\infty}
\E\!\left[
e^{-\lambda t}|\rho_{t+1}^{f,\varepsilon}|^2
\right]
=
e^\lambda
\sum_{s=1}^{\infty}
\E\!\left[
e^{-\lambda s}|\rho_s^{f,\varepsilon}|^2
\right],
\]
we conclude that
\[
\|\rho^{b,\varepsilon}\|_{\lambda}^2
+
\|\rho^{\sigma,\varepsilon}\|_{\lambda}^2
+
\sum_{t=0}^{\infty}
\E\!\left[
e^{-\lambda t}|\rho_{t+1}^{f,\varepsilon}|^2
\right]
\longrightarrow0\ \ \ {\rm as}\ \varepsilon\to0.
\]

The homogeneous coefficients
\(\bar b_\mu^\varepsilon,\bar\sigma_\mu^\varepsilon,
\bar f_\mu^\varepsilon\), \(\mu=x,y,z\), satisfy the same bounds as
the corresponding derivatives of \(b,\sigma,f\). Hence the same
discount condition remains valid for the homogeneous part.
Applying Corollary~\ref{cor:nonhomogeneous-stability} to the system for
\((\widetilde X^\varepsilon,\widetilde Y^\varepsilon,
\widetilde Z^\varepsilon)\), we obtain as $\varepsilon\to0$,
\[
\begin{aligned}
	\|\widetilde X^\varepsilon\|_{\lambda}^2
	+\|\widetilde Y^\varepsilon\|_{\lambda}^2
	+\|\widetilde Z^\varepsilon\|_{\lambda}^2\le
	C\left(
	\|\rho^{b,\varepsilon}\|_{\lambda}^2
	+\|\rho^{\sigma,\varepsilon}\|_{\lambda}^2
	+\sum_{t=0}^{\infty}
	\E\!\left[
	e^{-\lambda t}|\rho_{t+1}^{f,\varepsilon}|^2
	\right]
	\right)
	\longrightarrow0.
\end{aligned}
\]
The proof is complete.

\end{proof}

\begin{lemma}[First-order expansion of the cost]\label{lem:cost-expansion}
Under Assumptions~\ref{ass:control-state}--\ref{ass:control-diff-cost},
\begin{align*}
\lim_{\varepsilon\downarrow0}\frac{J(u^\varepsilon)-J(\bar u)}{\varepsilon}
=
\sum_{t=0}^{\infty}\E\bigl[e^{-\lambda t}(&\ip{l_x(t)}{\xi_t}+\ip{l_y(t)}{\eta_t}+\ip{l_z(t)}{\zeta_t}
+\ip{l_u(t)}{v_t})\bigr],
\end{align*}
where all derivatives of \(l\) are evaluated along the tuple \((\bar X_t,\bar Y_t,\bar Z_t,\bar u_t)\).
\end{lemma}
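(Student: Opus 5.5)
We write the difference quotient via the mean value theorem along the segment between the two trajectories. Set \(S_t=(\bar X_t,\bar Y_t,\bar Z_t,\bar u_t)\) and \(S_t^\varepsilon=(X_t^\varepsilon,Y_t^\varepsilon,Z_t^\varepsilon,u_t^\varepsilon)\), and for \(\mu=x,y,z,u\) define the averaged derivatives
\[
\bar l_\mu^\varepsilon(t):=\int_0^1 l_\mu\bigl(t,S_t+\rho(S_t^\varepsilon-S_t)\bigr)\,d\rho .
\]
Since \(l\) is \(C^1\) in \((x,y,z,u)\) and \(U\) is convex, the segment stays in the domain. Hence, pointwise in \((\omega,t)\),
\[
\frac{l(t,S_t^\varepsilon)-l(t,S_t)}{\varepsilon}
=\ip{\bar l_x^\varepsilon(t)}{R_t^{X,\varepsilon}}+\ip{\bar l_y^\varepsilon(t)}{R_t^{Y,\varepsilon}}+\ip{\bar l_z^\varepsilon(t)}{R_t^{Z,\varepsilon}}+\ip{\bar l_u^\varepsilon(t)}{v_t},
\]
where \(R^{X,\varepsilon},R^{Y,\varepsilon},R^{Z,\varepsilon}\) are the difference quotients from the proof of Lemma~\ref{lem:remainder}. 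By Lemma~\ref{lem:cost-well-defined}, both \(J(u^\varepsilon)\) and \(J(\bar u)\) are absolutely convergent series. So \((J(u^\varepsilon)-J(\bar u))/\varepsilon\) equals \(\int\) of the right-hand side \(d\nu\), and the series may be subtracted termwise.

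The target is then the convergence of each pairing in \(L^1(\nu)\). This follows from the Cauchy--Schwarz inequality in \(L^2(\nu)\) once we have:
\begin{itemize}
\item \(\bar l_\mu^\varepsilon\to l_\mu(\cdot,S)\) in \(L^2(\nu)\);
\item \(R^{X,\varepsilon}\to\xi\), \(R^{Y,\varepsilon}\to\eta\), \(R^{Z,\varepsilon}\to\zeta\) in \(L^2(\nu)\), with \(v\) fixed in \(L^2(\nu)\).
\end{itemize}
Indeed, we split
\[
\ip{\bar l_x^\varepsilon}{R^{X,\varepsilon}}-\ip{l_x}{\xi}=\ip{\bar l_x^\varepsilon-l_x}{R^{X,\varepsilon}}+\ip{l_x}{R^{X,\varepsilon}-\xi}.
\]
The first term is bounded by \(\|\bar l_x^\varepsilon-l_x\|_\lambda\,\|R^{X,\varepsilon}\|_\lambda\). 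Here \(\|R^{X,\varepsilon}\|_\lambda\le C\|v\|_\lambda\) by Lemma~\ref{lem:state-stability}. The second term is bounded by \(\|l_x\|_\lambda\|\widetilde X^\varepsilon\|_\lambda\to0\) by Lemma~\ref{lem:remainder}. The quantity \(\|l_x\|_\lambda\) is finite by the growth bound on \(l_x\) together with \(\bar X,\bar Y,\bar Z,\bar u\in L^{2,\lambda}\), the \(\nu\)-square-integrability of \(\varrho\), and the finiteness of \(\nu\). The \(y,z,u\) terms are handled identically, with the \(u\)-term needing only the first piece.

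The main step is the convergence \(\bar l_\mu^\varepsilon\to l_\mu(\cdot,S)\) in \(L^2(\nu)\). Unlike the coefficients \(b,\sigma,f\), the derivatives of \(l\) are not bounded, so Lemma~\ref{lem:controlled-convergence}(i) does not apply. Instead we use Lemma~\ref{lem:controlled-convergence}(ii), whose hypothesis is designed for exactly this linear-growth situation. We take
\[
\mathcal X=\R^m\times\R^n\times\R^{n\times d}\times\R^r,\qquad \Phi(t,s)=l_\mu(t,s),\qquad a_t=\varrho_t\in L^{2,\lambda}_{\F},
\]
where \(\Phi\) is a.s. continuous in \(s\) and satisfies \(|\Phi(t,s)|\le a_t+C(1+|s|)\) by Assumption~\ref{ass:control-diff-cost}. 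The required convergence \(S^\varepsilon\to S\) in \(L^{2,\lambda}\) comes from Lemma~\ref{lem:state-stability} together with \(\|u^\varepsilon-\bar u\|_\lambda=\varepsilon\|v\|_\lambda\).

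One technical point needs care: \(l\) is only defined for \(u\in U\), while Lemma~\ref{lem:controlled-convergence}(ii) is stated with \(\Phi\) on all of \(\mathcal X\). Since \(U\) is convex, every point \(S_t+\rho(S_t^\varepsilon-S_t)\) already has its \(u\)-component in \(U\). The proof of that lemma only evaluates \(\Phi\) along such segments, so it goes through verbatim. Alternatively, one can extend \(l_\mu(t,\cdot)\) continuously with the same growth by composing with the metric projection onto \(\bar U\); this requires \(l_\mu\) to be continuous up to \(\bar U\), which holds if we read the \(C^1\) assumption on a neighbourhood. With this in place, combining the pieces and summing the four pairings gives the stated limit.
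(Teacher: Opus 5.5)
Your proposal is correct and follows essentially the same route as the paper: the mean-value identity with the averaged derivatives $\bar l_\mu^\varepsilon$, Lemma~\ref{lem:controlled-convergence}(ii) with $a=\varrho$ to get $\bar l_\mu^\varepsilon\to l_\mu$ in $L^{2,\lambda}$, Lemma~\ref{lem:remainder} for the difference quotients, and Cauchy--Schwarz in $L^2(\nu)$. The differences are cosmetic. You split the product as $\langle \bar l_x^\varepsilon-l_x,R^{X,\varepsilon}\rangle+\langle l_x,R^{X,\varepsilon}-\xi\rangle$, using $\|R^{X,\varepsilon}\|_\lambda\le C\|v\|_\lambda$ from Lemma~\ref{lem:state-stability}, whereas the paper uses $\langle \bar l_x^\varepsilon,R^{X,\varepsilon}-\xi\rangle+\langle \bar l_x^\varepsilon-l_x,\xi\rangle$ together with $\sup_\varepsilon\|\bar l_x^\varepsilon\|_\lambda<\infty$. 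You also make explicit two points the paper leaves implicit: termwise subtraction of the series via Lemma~\ref{lem:cost-well-defined}, and the fact that $l_\mu$ is only evaluated on segments lying in $\R^m\times\R^n\times\R^{n\times d}\times U$.
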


\begin{proof}
For \(\mu=x,y,z,u\), define the averaged derivative
\[
\bar l_\mu^\varepsilon(t):=
\int_0^1 l_\mu\bigl(t,\bar X_t+s(X_t^\varepsilon-\bar X_t),
\bar Y_t+s(Y_t^\varepsilon-\bar Y_t),
\bar Z_t+s(Z_t^\varepsilon-\bar Z_t),
\bar u_t+s(u_t^\varepsilon-\bar u_t)\bigr)\,ds.
\]
The fundamental theorem of calculus along the line segment yields
\begin{align}\label{cz4}
&\frac{l(t,X_t^\varepsilon,Y_t^\varepsilon,Z_t^\varepsilon,u_t^\varepsilon)
-l(t,\bar X_t,\bar Y_t,\bar Z_t,\bar u_t)}{\varepsilon} \notag\\
&=\ip{\bar l_x^\varepsilon(t)}{\frac{X_t^\varepsilon-\bar X_t}{\varepsilon}}
+\ip{\bar l_y^\varepsilon(t)}{\frac{Y_t^\varepsilon-\bar Y_t}{\varepsilon}}
+\ip{\bar l_z^\varepsilon(t)}{\frac{Z_t^\varepsilon-\bar Z_t}{\varepsilon}}
+\ip{\bar l_u^\varepsilon(t)}{v_t}.
\end{align}
By Lemma~\ref{lem:state-stability},
\[
(X^\varepsilon,Y^\varepsilon,Z^\varepsilon,u^\varepsilon)\longrightarrow
(\bar X,\bar Y,\bar Z,\bar u)\ \ \ \text{in }L^{2,\lambda}\ {\rm as}\ \varepsilon\to0.
\]
The linear growth condition on \(l_x,l_y,l_z,l_u\) and Lemma~\ref{lem:controlled-convergence} (ii) implies
\[
\|\bar l_\mu^\varepsilon-l_\mu\|_{\lambda}\longrightarrow0\ \ \ {\rm as}\ \varepsilon\to0,
\]
where \(l_\mu(t)=l_\mu(t,\bar X_t,\bar Y_t,\bar Z_t,\bar u_t)\). In particular, for each \(\mu\) there exist \(\varepsilon_0>0\) and \(C_\mu<\infty\) such that
\[
\sup_{0<\varepsilon\leq\varepsilon_0}
\|\bar l_\mu^\varepsilon\|_{\lambda}\leq C_\mu.
\]
Moreover,
by Lemma~\ref{lem:remainder}, as \(\varepsilon\to0\),
\[
\frac{X^\varepsilon-\bar X}{\varepsilon}\longrightarrow\xi,\qquad
\frac{Y^\varepsilon-\bar Y}{\varepsilon}\longrightarrow\eta,\qquad
\frac{Z^\varepsilon-\bar Z}{\varepsilon}\longrightarrow\zeta
\quad\text{in }L^{2,\lambda}.
\]

Then we show that the first term of the right hand side of \eqref{cz4} converges in \(L^1(\nu)\) to
$\ip{l_x}{\xi}$. To see this, as \(\varepsilon\to0\),
\begin{align*}
\sum_{t=0}^{\infty}\E\biggl[e^{-\lambda t}\biggl|
\ip{\bar l_x^\varepsilon(t)}{\frac{X_t^\varepsilon-\bar X_t}{\varepsilon}}
-\ip{l_x(t)}{\xi_t}\biggr|\biggr]\le
\|\bar l_x^\varepsilon\|_{\lambda}
\left\|\frac{X^\varepsilon-\bar X}{\varepsilon}-\xi\right\|_{\lambda}
+\|\bar l_x^\varepsilon-l_x\|_{\lambda}\|\xi\|_{\lambda}
\longrightarrow0.
\end{align*}
The convergence of the second and the third terms can be deduced similarly. As for the control term in \eqref{cz4}, as \(\varepsilon\to0\),
\[
\sum_{t=0}^{\infty}\E\left[e^{-\lambda t}
\left|\ip{\bar l_u^\varepsilon(t)-l_u(t)}{v_t}\right|\right]
\le
\|\bar l_u^\varepsilon-l_u\|_{\lambda}\|v\|_{\lambda}\longrightarrow0.
\]
Thus \eqref{cz4} converges in \(L^1(\nu)\) to
\[
\ip{l_x}{\xi}+\ip{l_y}{\eta}+\ip{l_z}{\zeta}+\ip{l_u}{v}.
\]
Then Lemma \ref{lem:cost-expansion} follows by taking the integration  in \(\Omega\times\mathbb Z_+\) with respect to the measure \(\nu\).
\end{proof}

\subsection{Adjoint equation and transversality relations}

Define the Hamiltonian by
\begin{equation}\label{eq:Hamiltonian}
H(t,x,y,z,u,p,q,k)
:=
l(t,x,y,z,u)-\ip{b(t,x,y,z,u)}{p}-\ip{\sigma(t,x,y,z,u)}{q}
+\ip{f(t,x,y,z,u)}{k},
\end{equation}
where \(p\in\R^m\), \(q\in\R^{m\times d}\) and \(k\in\R^n\). The inner product between matrices is the Frobenius inner product. In the following, all derivatives of \(H\) are evaluated along the optimal trajectory
\[
(\bar X_t,\bar Y_t,\bar Z_t,\bar u_t,p_t,q_t,k_t)
\]
unless otherwise stated.

We use Fr\'{e}chet derivatives with respect to the matrix variable \(z\). For example,
\[
b_z(t):\R^{n\times d}\to\R^m
\]
is a bounded linear operator, and \(b_z(t)^*:\R^m\to\R^{n\times d}\) represents its adjoint under the Euclidean and Frobenius inner products, namely
\[
\langle b_z(t)h,p\rangle_{\R^m}=\langle h,b_z(t)^*p\rangle_{\R^{n\times d}}.
\]

The adjoint equation associated with the optimal trajectory is
\begin{equation}\label{eq:adjoint}
\left\{
\begin{aligned}
&k_{t+1}
={}k_t+H_y(t)+H_z(t)\Delta W_t,\\
&p_t+q_t\Delta W_t+\Delta R_t
={}e^{-\lambda}\left(p_{t+1}-H_x(t+1)\right),\\
&k_0={}0.
\end{aligned}
\right.
\end{equation}
Equivalently,
\begin{align*}
k_{t+1}
={}&k_t+\bigl(l_y(t)-b_y(t)^* p_t-\sigma_y(t)^* q_t+f_y(t)^* k_t\bigr)\\
&+\bigl(l_z(t)-b_z(t)^* p_t-\sigma_z(t)^* q_t+f_z(t)^* k_t\bigr)\Delta W_t,
\end{align*}
and
\begin{align*}
p_t+q_t\Delta W_t+\Delta R_t
={}e^{-\lambda}\Bigl(p_{t+1}-l_x(t+1)+b_x(t+1)^* p_{t+1}+\sigma_x(t+1)^* q_{t+1}-f_x(t+1)^* k_{t+1}\Bigr).
\end{align*}

We next show that the adjoint equation is well posed. No additional coefficient assumptions on the adjoint equation are needed. Indeed, the adjoint equation is a linear fully coupled FBS\(\Delta\)E whose coefficients are composed of the first-order derivatives of the original coefficients \(b,\sigma,f\).
By Assumption~\ref{ass:control-state}, its coefficients are uniformly bounded
and satisfy the Lipschitz conditions in
Theorem~\ref{thm:fbsde-solvability}.
The only nonhomogeneous terms in the adjoint equation are \(l_x,l_y,l_z\), whose weighted square-integrability follows from the growth condition on \(l\) and the weighted square-integrability of the optimal state-control process. Assumption~\ref{ass:control-discount} is written so that the same discount parameter \(\lambda\) controls both the state FBS\(\Delta\)E and the adjoint FBS\(\Delta\)E.

\begin{proposition}[Solvability of the adjoint equation]\label{prop:adjoint-solvability}
Under Assumptions~\ref{ass:control-state}--\ref{ass:control-diff-cost}, if \((\bar X,\bar Y,\bar Z,\bar u)\) is the optimal state-control process, then the adjoint equation \eqref{eq:adjoint} admits a unique adapted solution, denoted by $(k,p,q,R)$. Moreover, the solution satisfies the stronger summability property
\[
(k,p,q,\Delta R)
\in
L^{2,\lambda}_{\F}(0,\infty;\R^n)
\times
L^{2,\lambda}_{\F}(0,\infty;\R^m)
\times
L^{2,\lambda}_{\F}(0,\infty;\R^{m\times d})
\times
L^{2,\lambda}_{\F}(0,\infty;\R^m).
\]
\end{proposition}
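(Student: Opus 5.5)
The plan is to recognize \eqref{eq:adjoint} as a linear instance of the nonhomogeneous system \eqref{eq:nonhomogeneous-fbsde} and to apply Corollary~\ref{cor:nonhomogeneous-stability}. The forward variable is \(k\in\R^n\) (playing the role of \(X\), with \(k_0=0\)). The backward pair is \((p,q)\in\R^m\times\R^{m\times d}\) (playing the role of \((Y,Z)\)), with orthogonal martingale \(R\). Concretely, I would set
\[
\mathcal B_t(k,p,q):=f_y(t)^*k-b_y(t)^*p-\sigma_y(t)^*q,\qquad
\Sigma^0_t(k,p,q):=f_z(t)^*k-b_z(t)^*p-\sigma_z(t)^*q,
\]
\[
\mathcal G_t(k,p,q):=b_x(t)^*p+\sigma_x(t)^*q-f_x(t)^*k,
\]
with nonhomogeneous terms \(\varphi_t=l_y(t)\), \(\psi_t=l_z(t)\) and \(\theta_{t}=-l_x(t)\). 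These maps are linear, so they vanish at the origin. Their Lipschitz constants are the operator norms of the adjoint derivatives, and these equal the operator norms of the derivatives themselves. By Assumption~\ref{ass:control-state} those norms are bounded by the corresponding Lipschitz constants, since a \(C^1\) map with Lipschitz constant \(L\) in one variable has derivative of operator norm at most \(L\) in that variable. This gives the following constants in the sense of Assumption~\ref{ass:fbsde-coeff}:
\begin{itemize}
\item for \(\mathcal B\): \((L_f^y,L_b^y,L_\sigma^y)\) in \((k,p,q)\);
\item for \(\Sigma^0\): \((L_f^z,L_b^z,L_\sigma^z)\);
\item for \(\mathcal G\): \((L_f^x,L_b^x,L_\sigma^x)\).
\end{itemize}

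Next I verify Assumption~\ref{ass:fbsde-discount} for this system. Its quantities become
\[
\mathcal P_0^{a}=(1+L_f^y)^2+(L_f^z)^2,\qquad B^{a}_{yz}=\sqrt{(L_b^y)^2+(L_\sigma^y)^2},\qquad \Sigma^{a}_{yz}=\sqrt{(L_b^z)^2+(L_\sigma^z)^2},
\]
\[
F^{a}_x=L_f^x,\qquad G^{a}=\sqrt{(1+L_b^x)^2+(L_\sigma^x)^2}.
\]
By direct comparison with Assumption~\ref{ass:control-discount}, each of these is dominated by the corresponding controlled quantity: \(\mathcal P_0^a\le\mathcal P_{0,c}\), \(B^a_{yz}\le B_c\), \(\Sigma^a_{yz}\le\Sigma_c\), \(F^a_x=F_c\) and \(G^a\le G_c\). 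Hence \(\mathcal K^a\le\mathcal K_c\), and therefore \(\mathcal P^a\le\mathcal P_c\), \(\mathcal Q^a\le\mathcal Q_c\), \(\mathcal R^a\le\mathcal R_c\) and \(\mathcal S^a\le\mathcal S_c\). The threshold \(\tfrac12\bigl(\mathcal P+\mathcal S+\sqrt{(\mathcal P-\mathcal S)^2+4\mathcal Q\mathcal R}\bigr)\) is the spectral radius of the nonnegative matrix \(\begin{psmallmatrix}\mathcal P&\mathcal Q\\ \mathcal R&\mathcal S\end{psmallmatrix}\). By Perron--Frobenius monotonicity it is nondecreasing in each entry. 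So \eqref{eq:control-global-discount} implies \eqref{eq:fbsde-new-discount} for the adjoint system. I expect this bookkeeping to be the main obstacle: the roles of \(x\) and \((y,z)\) are swapped between the state and adjoint systems, so the assignment of constants must be tracked carefully. This is precisely why \(B_c,\Sigma_c,\mathcal P_{0,c},G_c\) mix the constants as they do. I would also note, as a sanity check, that the same domination holds for the state system itself.

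It remains to check integrability of the inhomogeneities. By Theorem~\ref{thm:fbsde-solvability} and Remark~\ref{rem:any-control-origin}, the optimal state satisfies \(\bar X,\bar Y,\bar Z\in L^{2,\lambda}_{\F}\), and \(\bar u\in L^{2,\lambda}_{\F}\) by admissibility. The growth bound in Assumption~\ref{ass:control-diff-cost} gives \(|l_\mu(t)|\le\varrho_t+C(1+|\bar X_t|+|\bar Y_t|+|\bar Z_t|+|\bar u_t|)\). Since \(\sum_t e^{-\lambda t}<\infty\), it follows that \(l_x,l_y,l_z\in L^{2,\lambda}_{\F}\). The backward term appears at time \(t+1\) as \(\theta_{t+1}=-l_x(t+1)\), which matches the indexing in \eqref{eq:nonhomogeneous-fbsde}. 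Corollary~\ref{cor:nonhomogeneous-stability}, applied with zero initial condition, then yields existence and uniqueness of \((k,p,q,R)\) together with the estimate
\[
\|k\|_\lambda^2+\|p\|_\lambda^2+\|q\|_\lambda^2+\|\Delta R\|_\lambda^2\le C\bigl(\|l_x\|_\lambda^2+\|l_y\|_\lambda^2+\|l_z\|_\lambda^2\bigr).
\]
This is the claimed summability.
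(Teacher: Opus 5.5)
Your proof is correct and takes essentially the same route as the paper. The paper absorbs $l_x,l_y,l_z$ into the coefficients and applies \cref{thm:fbsde-solvability} directly, checking the same swapped Lipschitz constants, the discount condition via monotonicity of the spectral radius, and the origin integrability of $l_x,l_y,l_z$; you instead treat $l_x,l_y,l_z$ as inhomogeneities in \cref{cor:nonhomogeneous-stability}, which is only a repackaging of that theorem, and your entrywise comparison $\mathcal P^a\le\mathcal P_c$, $\mathcal Q^a\le\mathcal Q_c$, $\mathcal R^a\le\mathcal R_c$, $\mathcal S^a\le\mathcal S_c$ supplies the detail the paper calls straightforward.
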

\begin{proof}
Define
\begin{align*}
&\widehat b(t,k,p,q)
:=l_y(t)-b_y(t)^* p-\sigma_y(t)^* q+f_y(t)^* k,\\
&\widehat\sigma(t,k,p,q)
:=l_z(t)-b_z(t)^* p-\sigma_z(t)^* q+f_z(t)^* k,\\
&\widehat f(t,k,p,q)
:=-l_x(t)+b_x(t)^* p+\sigma_x(t)^* q-f_x(t)^* k.
\end{align*}
Then the adjoint equation \eqref{eq:adjoint} can be written as a fully coupled linear FBS\(\Delta\)E
\begin{equation}\label{eq:adjoint-linear-fbsde}
\left\{
\begin{aligned}
&k_{t+1}=k_t+\widehat b(t,k_t,p_t,q_t)+\widehat\sigma(t,k_t,p_t,q_t)\Delta W_t,\\
&p_t+q_t\Delta W_t+\Delta R_t
=e^{-\lambda}\left[p_{t+1}+\widehat f(t+1,k_{t+1},p_{t+1},q_{t+1})\right],\\
&k_0=0.
\end{aligned}
\right.
\end{equation}

We verify that FBS\(\Delta\)E \eqref{eq:adjoint-linear-fbsde} satisfies the conditions of Theorem \ref{thm:fbsde-solvability}.
First of all, the coefficients \(\widehat b, \widehat\sigma, \widehat f\) satisfy
\[
\begin{aligned}
&|\widehat b(t,k_1,p_1,q_1)-\widehat b(t,k_2,p_2,q_2)|\leq
L_f^y|k_1-k_2|+L_b^y|p_1-p_2|+L_\sigma^y|q_1-q_2|,\\
&|\widehat\sigma(t,k_1,p_1,q_1)-\widehat\sigma(t,k_2,p_2,q_2)|\leq
L_f^z|k_1-k_2|+L_b^z|p_1-p_2|+L_\sigma^z|q_1-q_2|,\\
&|\widehat f(t,k_1,p_1,q_1)-\widehat f(t,k_2,p_2,q_2)|\leq
L_f^x|k_1-k_2|+L_b^x|p_1-p_2|+L_\sigma^x|q_1-q_2|.
\end{aligned}
\]
Thus the above estimates show that
\(\widehat b,\widehat\sigma,\widehat f\)
satisfy the Lipschitz condition in
Assumption~\ref{ass:fbsde-coeff}.
It is straightforward to verify, by the monotonicity of the spectral radius for nonnegative matrices,
that Assumption~\ref{ass:control-discount} also guarantees the discount condition required by Theorem~\ref{thm:fbsde-solvability} for the adjoint system.

It remains only to check Assumption \ref{ass:fbsde-origin}, the integrability at the origin for the inhomogeneous terms. At the origin,
\[
\widehat b(t,0,0,0)=l_y(t),\qquad
\widehat\sigma(t,0,0,0)=l_z(t),\qquad
\widehat f(t,0,0,0)=-l_x(t).
\]
By the growth assumption on the derivatives of \(l\),
\[
|l_x(t)|+|l_y(t)|+|l_z(t)|
\leq
\varrho_t+C\bigl(1+|\bar X_t|+|\bar Y_t|+|\bar Z_t|+|\bar u_t|\bigr).
\]
Then due to the weighted square-integrability of \(\varrho\), \(\bar X\), \(\bar Y\), \(\bar Z\) and \(\bar u\), it yields
\[
\sum_{t=0}^{\infty}\E\left[e^{-\lambda t}
\left(|l_x(t)|^2+|l_y(t)|^2+|l_z(t)|^2\right)\right]<\infty.
\]
Hence Assumption~\ref{ass:fbsde-origin} is satisfied.

Therefore, the adjoint equation \eqref{eq:adjoint-linear-fbsde} satisfies all conditions of Theorem~\ref{thm:fbsde-solvability}, and Proposition \ref{prop:adjoint-solvability} follows from Theorem~\ref{thm:fbsde-solvability}.
\end{proof}

\begin{lemma}[Transversality relations]\label{lem:transversality}
Under Assumptions~\ref{ass:control-state}--\ref{ass:control-diff-cost}, if \((\xi,\eta,\zeta,Q)\) and \((k,p,q,R)\) are the solutions to the variational equation \eqref{eq:variational} and the adjoint equation \eqref{eq:adjoint-linear-fbsde}, respectively, then
\[
\lim_{T\to\infty}\E\left[e^{-\lambda T}\ip{\xi_T}{p_T}\right]=0
\ \ \ {\rm and}\ \ \
\lim_{T\to\infty}\E\left[e^{-\lambda T}\ip{\eta_T}{k_T}\right]=0.
\]
\end{lemma}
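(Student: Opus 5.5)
The plan is to reduce both limits to the summability established earlier and use the fact that the general term of a convergent series tends to zero. By Lemma~\ref{lem:var-solvability}, $(\xi,\eta,\zeta)\in L^{2,\lambda}_{\F}$, so
\[
\sum_{T=0}^{\infty}\E\left[e^{-\lambda T}\bigl(|\xi_T|^2+|\eta_T|^2\bigr)\right]<\infty .
\]
By Proposition~\ref{prop:adjoint-solvability}, $(k,p,q)\in L^{2,\lambda}_{\F}$, so
\[
\sum_{T=0}^{\infty}\E\left[e^{-\lambda T}\bigl(|k_T|^2+|p_T|^2\bigr)\right]<\infty .
\]

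The key step is a pointwise Cauchy--Schwarz bound in $\omega$ followed by Cauchy--Schwarz in expectation:
\[
\bigl|\E[e^{-\lambda T}\ip{\xi_T}{p_T}]\bigr|
\le \left(\E[e^{-\lambda T}|\xi_T|^2]\right)^{1/2}\left(\E[e^{-\lambda T}|p_T|^2]\right)^{1/2}.
\]
Each factor is the square root of the $T$-th term of a convergent series. Hence each factor tends to $0$ as $T\to\infty$, and the product tends to $0$. In fact, the inequality $2\sqrt{ab}\le a+b$ shows that the sequence $\E[e^{-\lambda T}|\ip{\xi_T}{p_T}|]$ is itself summable, which is stronger than needed. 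The identical argument with $(\eta_T,k_T)$ in place of $(\xi_T,p_T)$ gives the second limit.

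The only point requiring care is that the weighted summability of the solutions genuinely holds, and not merely the weaker $S^{2,\lambda}$ bounds. Mere boundedness of $\E[e^{-\lambda T}|\eta_T|^2]$ and $\E[e^{-\lambda T}|k_T|^2]$ would only make the product bounded, not vanishing. The stronger $L^{2,\lambda}$ property is therefore essential. It is supplied for $\eta$ by the estimate in Lemma~\ref{lem:var-solvability}, itself coming from Corollary~\ref{cor:nonhomogeneous-stability}. It is supplied for $k,p$ by the $L^{2,\lambda}$ statement in Proposition~\ref{prop:adjoint-solvability}, which relies on Assumption~\ref{ass:control-discount} covering the adjoint system.

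Note that the weight $e^{-\lambda T}$ appears only once in the pairing, yet it splits as $e^{-\lambda T/2}\cdot e^{-\lambda T/2}$ between the two factors. This matches exactly the weights in both norms, so no extra discount is needed. I expect no real obstacle beyond checking these integrability inputs.
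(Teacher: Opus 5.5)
Your proposal is correct and essentially the paper's proof: Cauchy--Schwarz in expectation bounds each pairing by the geometric mean of the $T$-th terms of the convergent series $\|\xi\|_\lambda^2,\|p\|_\lambda^2$ (resp.\ $\|\eta\|_\lambda^2,\|k\|_\lambda^2$). Those terms vanish by the $L^{2,\lambda}$-summability from Lemma~\ref{lem:var-solvability} and Proposition~\ref{prop:adjoint-solvability}, and you rightly note that mere $S^{2,\lambda}$ bounds would not suffice.
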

\begin{proof}
We only prove the first convergence, and the second one can be proved similarly. First note
\[
\left|\E[e^{-\lambda T}\ip{\xi_T}{p_T}]\right|
\leq
\left(\E[e^{-\lambda T}|\xi_T|^2]\right)^{1/2}
\left(\E[e^{-\lambda T}|p_T|^2]\right)^{1/2}.
\]
Noticing \(\xi,p\in L^{2,\lambda}_{\F}(0,\infty)\), we know
\(\E[e^{-\lambda T}|\xi_T|^2]\rightarrow0\) and \(\E[e^{-\lambda T}|p_T|^2]\rightarrow0\) as $T\to\infty$, which leads to the first convergence.
\end{proof}

\begin{remark}\label{rem:transversality-no-assumption}
The transversality relations are not imposed as additional conditions. They follow from the weighted square-integrability of the variational and adjoint processes.
\end{remark}

\subsection{Stochastic maximum principle and verification theorem}

We first record the duality identity used in the proof of the stochastic maximum principle. This identity is the core algebraic step: it transfers the first-order state variation terms in the cost expansion to a first-order term involving only the control variation.

\begin{lemma}[Duality identity]\label{lem:duality-identity}
Under Assumptions~\ref{ass:control-state}--\ref{ass:control-diff-cost}, if  \((\xi,\eta,\zeta,Q)\) and \((k,p,q,R)\) are the solutions to the variational equation \eqref{eq:variational} and the adjoint equation \eqref{eq:adjoint}, respectively, then
\begin{align}\label{eq:duality-identity}
&\sum_{t=0}^{\infty}\E\left[e^{-\lambda t}\left(
\ip{l_x(t)}{\xi_t}+\ip{l_y(t)}{\eta_t}+\ip{l_z(t)}{\zeta_t}
\right)\right] \\
&\qquad=
\sum_{t=0}^{\infty}\E\left[e^{-\lambda t}
\ip{-b_u(t)^* p_t-\sigma_u(t)^* q_t+f_u(t)^* k_t}{v_t}
\right], \notag
\end{align}
where all derivatives are evaluated along the optimal trajectory.
\end{lemma}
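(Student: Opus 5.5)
The plan is the standard discrete-time duality computation. We take one-step increments of the two weighted pairings $e^{-\lambda t}\ip{\xi_t}{p_t}$ and $e^{-\lambda t}\ip{\eta_t}{k_t}$, sum over $t=0,\dots,T-1$, and let $T\to\infty$ using \cref{lem:transversality}.

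\emph{Step 1 (forward $\xi$ against backward $p$).} From the adjoint backward equation, $p_t=\E[e^{-\lambda}(p_{t+1}-H_x(t+1))\mid\F_t]$. We will also use the identity $q_t=e^{-\lambda}\E[(p_{t+1}-H_x(t+1))(\Delta W_t)^\top\mid\F_t]$, which follows from the covariance normalization and strong orthogonality of $R$. Since $\xi_{t+1}=A_t+\Gamma_t\Delta W_t$ with $\F_t$-measurable $A_t,\Gamma_t$, we compute
\[
\E\bigl[e^{-\lambda(t+1)}\ip{\xi_{t+1}}{p_{t+1}-H_x(t+1)}\bigr]=\E\bigl[e^{-\lambda t}(\ip{A_t}{p_t}+\ip{\Gamma_t}{q_t})\bigr],
\]
where $A_t=\xi_t+b_x\xi_t+b_y\eta_t+b_z\zeta_t+b_uv_t$ and $\Gamma_t=\sigma_x\xi_t+\cdots+\sigma_uv_t$. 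This gives
\[
\E[e^{-\lambda(t+1)}\ip{\xi_{t+1}}{p_{t+1}}]-\E[e^{-\lambda t}\ip{\xi_t}{p_t}]
=\E\bigl[e^{-\lambda(t+1)}\ip{\xi_{t+1}}{H_x(t+1)}\bigr]+\E\bigl[e^{-\lambda t}\ip{\cdot}{\cdot}\bigr],
\]
where the last term collects the $b,\sigma$ derivatives applied to $(\xi,\eta,\zeta,v)$ paired with $(p_t,q_t)$. All products are integrable because every process lies in $L^{2,\lambda}$ (\cref{lem:var-solvability}, \cref{prop:adjoint-solvability}) and the derivatives are bounded. In particular, $e^{-\lambda t}\E|\ip{a_t}{b_t}|$ is summable by Cauchy--Schwarz.

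\emph{Step 2 (forward $k$ against backward $\eta$).} The roles are now reversed. Since $k_{t+1}=k_t+H_y(t)+H_z(t)\Delta W_t$ and $\eta_t=e^{-\lambda}\E[\eta_{t+1}+f_x\xi_{t+1}+f_y\eta_{t+1}+f_z\zeta_{t+1}+f_uv_{t+1}\mid\F_t]$, the same manipulation applied to $\ip{\eta}{k}$ gives
\[
\E[e^{-\lambda(t+1)}\ip{\eta_{t+1}}{k_{t+1}}]-\E[e^{-\lambda t}\ip{\eta_t}{k_t}]
=\E\bigl[e^{-\lambda t}(\ip{\eta_t}{H_y(t)}+\ip{\zeta_t}{H_z(t)})\bigr]-\E\bigl[e^{-\lambda(t+1)}\ip{f_x\xi_{t+1}+\cdots+f_uv_{t+1}}{k_{t+1}}\bigr].
\]
Here $\zeta_t=e^{-\lambda}\E[(\cdots)(\Delta W_t)^\top\mid\F_t]$ is used to handle the cross term $\E[\ip{\eta_{t+1}}{H_z(t)\Delta W_t}\mid\F_t]$, after rewriting $\eta_{t+1}$ through the backward relation. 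I expect this shift bookkeeping to be the main obstacle. The generator is evaluated at $t+1$ while $H_y,H_z$ sit at time $t$, so one must carefully check that the $f$-terms paired with $k_{t+1}$ are exactly those hidden in $H_x(t+1)$ (namely $-f_x^*k$) and $H_y,H_z$ (namely $f_y^*k$, $f_z^*k$). Terms at index $t+1$ must be reindexed to $t$, which is harmless because $\xi_0=0$ and $k_0=0$ kill the boundary contributions at $t=0$.

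\emph{Step 3 (summation and cancellation).} We add the two telescoped identities, sum over $t=0,\dots,T-1$, and expand $H_x,H_y,H_z$ via \eqref{eq:Hamiltonian}. All terms with $b_\mu,\sigma_\mu,f_\mu$ for $\mu=x,y,z$ cancel pairwise by the adjoint relations $\ip{b_yh}{p}=\ip{h}{b_y^*p}$ and their analogues. What remains on one side is the $l_x,l_y,l_z$ pairings, and on the other the control terms $\ip{-b_u^*p_t-\sigma_u^*q_t+f_u^*k_t}{v_t}$. The boundary terms $\E[e^{-\lambda T}\ip{\xi_T}{p_T}]$ and $\E[e^{-\lambda T}\ip{\eta_T}{k_T}]$, together with the tail of the index-shifted $f$ and $l$ sums, vanish as $T\to\infty$ by \cref{lem:transversality} and by $L^{2,\lambda}$-summability. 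Dominated convergence then lets us pass to the infinite sums, which yields \eqref{eq:duality-identity}.
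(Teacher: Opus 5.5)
Your proposal is correct and follows essentially the same route as the paper. Both arguments telescope the weighted pairings $e^{-\lambda t}\ip{\xi_t}{p_t}$ and $e^{-\lambda t}\ip{\eta_t}{k_t}$ via the conditional isometry, kill the boundary terms with \cref{lem:transversality}, use $\xi_0=0$ and $k_0=0$ to realign the index-shifted sums, and cancel all $b_\mu,\sigma_\mu,f_\mu$ ($\mu=x,y,z$) terms after expanding $H_x,H_y,H_z$. One small slip: $H_x$ contains $+f_x^*k$; the minus sign appears only in $p_{t+1}-H_x(t+1)$, and it is this plus sign that makes $\ip{\xi_{t+1}}{f_x(t+1)^*k_{t+1}}$ from Step 1 cancel $-\ip{f_x(t+1)\xi_{t+1}}{k_{t+1}}$ from Step 2.
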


\begin{proof}
We prove the identity by applying the discrete summation-by-parts formula to two dual pairings. First we work on a finite interval \(\{0,\ldots,T\}\). For a scalar sequence \(A=\{A_t\}\), we always write \(\Delta A_t:=A_{t+1}-A_t\) in this proof.

\emph{Step 1: Pairing between \(\xi\) and \(p\).}
From the adjoint backward equation, we have
\[
 e^{-\lambda(t+1)}p_{t+1}
=
e^{-\lambda t}(p_t+q_t\Delta W_t+\Delta R_t)
+e^{-\lambda(t+1)}H_x(t+1).
\]
On the other hand, it follows from the variational forward equation that
\[
\xi_{t+1}=\xi_t+B_t+\Sigma_t\Delta W_t,
\]
where
\[
B_t=b_x(t)\xi_t+b_y(t)\eta_t+b_z(t)\zeta_t+b_u(t)v_t
\]
and
\[
\Sigma_t=\sigma_x(t)\xi_t+\sigma_y(t)\eta_t+
\sigma_z(t)\zeta_t+\sigma_u(t)v_t.
\]
Then
\begin{align}\label{eq:xi-p-duality}
\sum_{t=0}^{T}\E\left[\Delta\ip{\xi_t}{e^{-\lambda t}p_t}\right]&=\sum_{t=0}^{T}\E\left[\ip{\xi_{t+1}}{e^{-\lambda(t+1)}p_{t+1}}-
\ip{\xi_t}{e^{-\lambda t}p_t}\right] \\
&=\sum_{t=0}^{T}\E\left[e^{-\lambda t}
\left(\ip{B_t}{p_t}+\ip{\Sigma_t}{q_t}\right)\right]
+\sum_{t=0}^{T}\E\left[e^{-\lambda(t+1)}\ip{\xi_{t+1}}{H_x(t+1)}\right]\notag.
\end{align}
Since \(\xi_0=0\), the left-hand side equals to
$
\E\left[e^{-\lambda(T+1)}\ip{\xi_{T+1}}{p_{T+1}}\right]$,
which tends to $0$ by Lemma~\ref{lem:transversality}.

\emph{Step 2: Pairing between \(\eta\) and \(k\).}
From the variational backward equation,
\[
e^{-\lambda(t+1)}\eta_{t+1}
=e^{-\lambda t}(\eta_t+\zeta_t\Delta W_t+\Delta Q_t)
-e^{-\lambda(t+1)}F_{t+1},
\]
where
\[
F_{t+1}:=f_x(t+1)\xi_{t+1}+f_y(t+1)\eta_{t+1}
+f_z(t+1)\zeta_{t+1}+f_u(t+1)v_{t+1}.
\]
On the other hand, the adjoint forward equation yields
\[
k_{t+1}=k_t+H_y(t)+H_z(t)\Delta W_t.
\]
Thus
\begin{align}\label{eq:eta-k-duality}
\sum_{t=0}^{T}\E\left[\Delta\ip{e^{-\lambda t}\eta_t}{k_t}\right] &=\sum_{t=0}^{T}\E\left[\ip{e^{-\lambda(t+1)}\eta_{t+1}}{k_{t+1}}
-\ip{e^{-\lambda t}\eta_t}{k_t}\right] \\
&=\sum_{t=0}^{T}\E\left[e^{-\lambda t}\left(\ip{\eta_t}{H_y(t)}+\ip{\zeta_t}{H_z(t)}\right)\right]-
\sum_{t=0}^{T}\E\left[e^{-\lambda(t+1)}\ip{F_{t+1}}{k_{t+1}}\right].\notag
\end{align}
Since \(k_0=0\), the left-hand side equals to
$\E\left[e^{-\lambda(T+1)}\ip{\eta_{T+1}}{k_{T+1}}\right]$,
which tends to $0$ by Lemma~\ref{lem:transversality} again.

\emph{Step 3: Passage to the infinite horizon.}
First note that the bounded coefficient derivatives and the weighted square-integrability of \((\xi,\eta,\zeta,v)\) imply \(B,\Sigma,F\in L^{2,\lambda}\). Moreover, the growth condition on the derivatives of \(l\), together with the weighted square-integrability of the state-control and adjoint processes, yields \(H_x,H_y,H_z\in L^{2,\lambda}\). Hence 
\begin{align*}
&\sum_{t=0}^{\infty}\E\bigl[e^{-\lambda t}\bigl(
|\langle B_t,p_t\rangle|+|\langle\Sigma_t,q_t\rangle|
+|\langle\eta_t,H_y(t)\rangle|+|\langle\zeta_t,H_z(t)\rangle|
\bigr)\bigr]\\
&\qquad\le
\|B\|_\lambda\|p\|_\lambda
+\|\Sigma\|_\lambda\|q\|_\lambda
+\|\eta\|_\lambda\|H_y\|_\lambda
+\|\zeta\|_\lambda\|H_z\|_\lambda<\infty,
\end{align*}
and by a similar method,
\[
\sum_{t=1}^{\infty}\E\left[e^{-\lambda t}
\left(|\langle\xi_t,H_x(t)\rangle|+|\langle F_t,k_t\rangle|\right)\right]
\le \|\xi\|_\lambda\|H_x\|_\lambda+\|F\|_\lambda\|k\|_\lambda<\infty.
\]
Based on \eqref{eq:xi-p-duality} and \eqref{eq:eta-k-duality}, by taking $T\to\infty$, we obtain
\begin{align*}
0={}&
\sum_{t=0}^{\infty}\E\left[e^{-\lambda t}
\left(\ip{B_t}{p_t}+\ip{\Sigma_t}{q_t}
+\ip{\eta_t}{H_y(t)}+\ip{\zeta_t}{H_z(t)}\right)\right] \\
&+\sum_{t=1}^{\infty}\E\left[e^{-\lambda t}\ip{\xi_t}{H_x(t)}\right]
-
\sum_{t=1}^{\infty}\E\left[e^{-\lambda t}\ip{F_t}{k_t}\right].
\end{align*}
Since \(\xi_0=0\) and \(k_0=0\), the sums from \(t=1\) may be extended to sums from \(t=0\). Putting
\begin{align*}
H_x=l_x-b_x^* p-\sigma_x^* q+f_x^* k,\\
H_y=l_y-b_y^* p-\sigma_y^* q+f_y^* k,\\
H_z=l_z-b_z^* p-\sigma_z^* q+f_z^* k,
\end{align*}
into above equality, we notice that all terms involving \(b_x,b_y,b_z\), \(\sigma_x,\sigma_y,\sigma_z\), and \(f_x,f_y,f_z\) cancel with the corresponding terms in \(B_t,\Sigma_t\), and \(F_t\). Thus we have
\[
0=
\sum_{t=0}^{\infty}\E\left[e^{-\lambda t}
\left(\ip{l_x(t)}{\xi_t}+\ip{l_y(t)}{\eta_t}+\ip{l_z(t)}{\zeta_t}
+\ip{b_u(t)v_t}{p_t}+\ip{\sigma_u(t)v_t}{q_t}-\ip{f_u(t)v_t}{k_t}
\right)\right],
\]
which leads to \eqref{eq:duality-identity}.
\end{proof}

\begin{theorem}[Stochastic maximum principle]\label{thm:smp}
Under Assumptions~\ref{ass:control-state}--\ref{ass:control-diff-cost}, if \(\bar u\in\Uad\) is an optimal control,
\((\bar X,\bar Y,\bar Z,\bar N)\) is the corresponding optimal state, and \((k,p,q,R)\) is
the solution to the adjoint equation
\eqref{eq:adjoint}, then for any \(u\in\Uad\),
\begin{equation}\label{eq:smp}
\sum_{t=0}^{\infty}
\E\left[
e^{-\lambda t}
\ip{
H_u(t,\bar X_t,\bar Y_t,\bar Z_t,\bar u_t,p_t,q_t,k_t)
}{
u_t-\bar u_t
}
\right]\ge0.
\end{equation}
Moreover, for any \(t\ge0\) and $a\in U$,
\begin{equation}\label{eq:smp-pointwise}
\ip{
H_u(t,\bar X_t,\bar Y_t,\bar Z_t,\bar u_t,p_t,q_t,k_t)
}{
a-\bar u_t
}
\ge0,
\qquad
\mathbb P-{\rm a.s.}
\end{equation}
Equivalently,
\[
-H_u(t,\bar X_t,\bar Y_t,\bar Z_t,\bar u_t,p_t,q_t,k_t)
\in N_U(\bar u_t),
\qquad \mathbb P-{\rm a.s.},
\]
where for $a\in U$,
\[
N_U(a)
:=
\left\{
\zeta\in\R^r:
\langle \zeta,v-a\rangle\le0
\text{ for any }v\in U
\right\}
\]
is the normal cone of the convex set \(U\).
\end{theorem}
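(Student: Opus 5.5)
The plan has two parts. First we derive the integral inequality \eqref{eq:smp} from optimality. Then we localize it to the pointwise condition \eqref{eq:smp-pointwise}.

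\emph{Integral inequality.} Fix $u\in\Uad$ and set $v=u-\bar u$. Since $U$ is convex, $u^\varepsilon=\bar u+\varepsilon v$ is $U$-valued for $\varepsilon\in(0,1]$. It is weighted square-summable because $\bar u$ and $u$ are, so $u^\varepsilon\in\Uad$. Optimality gives $J(u^\varepsilon)-J(\bar u)\ge0$. Dividing by $\varepsilon$ and letting $\varepsilon\downarrow0$, \cref{lem:cost-expansion} yields
\[
0\le\sum_{t=0}^{\infty}\E\bigl[e^{-\lambda t}\bigl(\ip{l_x(t)}{\xi_t}+\ip{l_y(t)}{\eta_t}+\ip{l_z(t)}{\zeta_t}+\ip{l_u(t)}{v_t}\bigr)\bigr].
\]
\cref{prop:adjoint-solvability} guarantees the adjoint solution exists, so \cref{lem:duality-identity} applies. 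It replaces the state terms by $\sum_t\E[e^{-\lambda t}\ip{-b_u(t)^*p_t-\sigma_u(t)^*q_t+f_u(t)^*k_t}{v_t}]$. The definition \eqref{eq:Hamiltonian} gives $H_u=l_u-b_u^*p-\sigma_u^*q+f_u^*k$, so the sum is exactly \eqref{eq:smp}. Every series involved is absolutely convergent, since $l_u$, $p$, $q$ and $k$ are in $L^{2,\lambda}$ and the derivatives of the coefficients are bounded. Hence the rearrangement is legitimate.

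\emph{Pointwise condition.} Fix $t_0\ge0$, $a\in U$ and $A\in\F_{t_0}$. Define $u_t=\bar u_t$ for $t\ne t_0$ and $u_{t_0}=a\mathbf 1_A+\bar u_{t_0}\mathbf 1_{A^c}$. This control is $U$-valued, adapted and square-integrable at $t_0$ because $a$ is constant, so $u\in\Uad$. Substituting it into \eqref{eq:smp} gives $\E[\mathbf 1_A\ip{H_u(t_0)}{a-\bar u_{t_0}}]\ge0$ for every $A\in\F_{t_0}$. The random variable $\ip{H_u(t_0)}{a-\bar u_{t_0}}$ is $\F_{t_0}$-measurable and integrable, because $H_u(t_0)$ and $\bar u_{t_0}$ are in $L^2$. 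Choosing $A=\{\ip{H_u(t_0)}{a-\bar u_{t_0}}<0\}$ then shows the inequality holds a.s. for each fixed $a$.

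\emph{The main obstacle} is that the null set depends on $a$, while \eqref{eq:smp-pointwise} must hold for all $a\in U$ simultaneously. We handle this by taking a countable dense subset $D$ of $U$, for instance $U$ intersected with rational points of its relative interior together with a countable dense set of $U$. Outside a single null set the inequality then holds for all $a\in D$. Since $a\mapsto\ip{H_u(t_0)}{a-\bar u_{t_0}}$ is continuous and $D$ is dense in $U$, it extends to all $a\in U$.

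Finally, the normal-cone formulation restates \eqref{eq:smp-pointwise} directly: $\ip{-H_u}{a-\bar u_t}\le0$ for all $a\in U$ is precisely $-H_u\in N_U(\bar u_t)$.
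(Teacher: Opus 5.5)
Your proposal is correct and follows essentially the same route as the paper's proof. You perturb convexly with $u^\varepsilon=\bar u+\varepsilon v$ and combine the cost expansion with the duality identity to get \eqref{eq:smp}. You then localize with $u_{t_0}=a\mathbf 1_A+\bar u_{t_0}\mathbf 1_{A^c}$ and pass to all $a\in U$ through a countable dense subset and continuity, exactly as the paper does. Your explicit choice of $A=\{\ip{H_u(t_0)}{a-\bar u_{t_0}}<0\}$ spells out the paper's ``since $A$ is arbitrary'' step. For $D$, plain separability of $U\subset\R^r$ suffices, so you do not need the relative-interior construction.
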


\begin{proof}
For any \(u\in\Uad\), recall the notation
\[
v:=u-\bar u,
\qquad
u^\varepsilon:=\bar u+\varepsilon v,\qquad 0<\varepsilon\le1.
\]
Since \(U\) is convex, \(u^\varepsilon\in\Uad\). By the optimality of
\(\bar u\),
\[
0\le
\lim_{\varepsilon\downarrow0}
\frac{J(u^\varepsilon)-J(\bar u)}{\varepsilon}.
\]
Using the first-order expansion of the cost in
Lemma~\ref{lem:cost-expansion}, we obtain
\[
0\le
\sum_{t=0}^{\infty}
\E\left[
e^{-\lambda t}
\left(
\ip{l_x(t)}{\xi_t}
+\ip{l_y(t)}{\eta_t}
+\ip{l_z(t)}{\zeta_t}
+\ip{l_u(t)}{v_t}
\right)
\right].
\]
Applying the duality identity in Lemma~\ref{lem:duality-identity}, we have
\[
0\le
\sum_{t=0}^{\infty}
\E\left[
e^{-\lambda t}
\ip{
l_u(t)-b_u(t)^* p_t-\sigma_u(t)^* q_t+f_u(t)^* k_t
}{
v_t
}
\right].
\]
Then \eqref{eq:smp} follows immediately from the definition of the Hamiltonian \eqref{eq:Hamiltonian}.

We next derive the pointwise variational inequality. For a given \(t\ge0\),
\(A\in\mathcal F_t\), and a deterministic \(a\in U\), define the localized
admissible control
\[
u_s^{A,a}
=
\begin{cases}
a, & s=t,\ \omega\in A,\\
\bar u_t, & s=t,\ \omega\in A^c,\\
\bar u_s, & s\neq t.
\end{cases}
\]
Equivalently,
\[
u_t^{A,a}=\bar u_t+1_A(a-\bar u_t),
\qquad
u_s^{A,a}=\bar u_s,\quad s\neq t.
\]
Note that \(u^{A,a}\) also takes values in \(U\) and \(u^{A,a}\in\Uad\). Substituting \(u=u^{A,a}\)
into \eqref{eq:smp} yields
\[
\E\left[
1_A e^{-\lambda t}
\ip{
H_u(t,\bar X_t,\bar Y_t,\bar Z_t,\bar u_t,p_t,q_t,k_t)
}{
a-\bar u_t
}
\right]\ge0.
\]
Since \(A\in\mathcal F_t\) is arbitrary, we obtain, for this given
\(a\in U\),
\[
\ip{
H_u(t,\bar X_t,\bar Y_t,\bar Z_t,\bar u_t,p_t,q_t,k_t)
}{
a-\bar u_t
}
\ge0,
\qquad \mathbb P-{\rm a.s.}
\]
On the other hand, since \(U\subset\R^r\) is separable, choose a countable dense subset
\(D\subset U\), 
we can get a full-measure set, on which the above inequality
holds for any \(a\in D\). By continuity of the inner product with respect
to \(a\), it turns out that \eqref{eq:smp-pointwise} holds for any \(a\in U\), \(\mathbb P\)-a.s.

Finally, the normal-cone formulation is a direct result from the definition of
\(N_U\). Indeed, \eqref{eq:smp-pointwise} is equivalent to
\[
\left\langle
-H_u(t,\bar X_t,\bar Y_t,\bar Z_t,\bar u_t,p_t,q_t,k_t),
a-\bar u_t
\right\rangle
\le0,
\qquad
\mathbb P\text{-a.s.}
\]
which is actually
\[
-H_u(t,\bar X_t,\bar Y_t,\bar Z_t,\bar u_t,p_t,q_t,k_t)
\in N_U(\bar u_t),
\qquad \mathbb P\text{-a.s.}.
\]

\end{proof}

\begin{theorem}[Verification theorem]\label{thm:verification}
For $\bar u\in\Uad$,
$(\bar X,\bar Y,\bar Z,\bar N)$ is its corresponding state process, and
$(k,p,q,R)$ is the solution to the corresponding adjoint equation
\eqref{eq:adjoint}. Under Assumptions~\ref{ass:control-state}--\ref{ass:control-diff-cost},
together with that
the mapping
\[
(x,y,z,u)\mapsto
H(t,x,y,z,u,p_t,q_t,k_t)
\]
is convex,
if for any $u\in\Uad$, \(\bar u\) satisfies
\begin{equation}\label{cz5}
\sum_{t=0}^{\infty}\E\left[e^{-\lambda t}\ip{H_u(t,\bar X_t,\bar Y_t,\bar Z_t,\bar u_t,p_t,q_t,k_t)}{u_t-\bar u_t}\right]\geq0,
\end{equation}
then \(\bar u\) is an optimal control.
\end{theorem}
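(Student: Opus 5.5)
The plan is to compare $J(u)-J(\bar u)$ for an arbitrary $u\in\Uad$ by writing the cost in terms of the Hamiltonian and then using a duality argument with the state differences in place of the variational processes. Fix $u\in\Uad$, let $(X,Y,Z,N)$ be its state, and set $\widehat X=X-\bar X$, $\widehat Y=Y-\bar Y$, $\widehat Z=Z-\bar Z$, $\Delta\widehat N=\Delta N-\Delta\bar N$ and $v=u-\bar u$. By Theorem~\ref{thm:fbsde-solvability}, Remark~\ref{rem:any-control-origin} and Lemma~\ref{lem:cost-well-defined}, all of these lie in $L^{2,\lambda}_{\F}$ and both costs are finite. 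Write $\widehat b_t=b(t,X_t,Y_t,Z_t,u_t)-b(t,\bar X_t,\bar Y_t,\bar Z_t,\bar u_t)$, with $\widehat\sigma_t$, $\widehat f_t$, $\widehat l_t$ defined the same way. Then
\[
J(u)-J(\bar u)=\sum_{t=0}^\infty\E\bigl[e^{-\lambda t}\widehat l_t\bigr],
\qquad
\widehat l_t=\widehat H_t+\ip{\widehat b_t}{p_t}+\ip{\widehat\sigma_t}{q_t}-\ip{\widehat f_t}{k_t},
\]
where $\widehat H_t:=H(t,X_t,Y_t,Z_t,u_t,p_t,q_t,k_t)-H(t,\bar X_t,\bar Y_t,\bar Z_t,\bar u_t,p_t,q_t,k_t)$. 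Convexity of $H$ in $(x,y,z,u)$ gives
\[
\widehat H_t\ge \ip{H_x(t)}{\widehat X_t}+\ip{H_y(t)}{\widehat Y_t}+\ip{H_z(t)}{\widehat Z_t}+\ip{H_u(t)}{v_t}.
\]

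Next I would redo the summation by parts of Lemma~\ref{lem:duality-identity}, applied to the pairings $\ip{\widehat X_t}{e^{-\lambda t}p_t}$ and $\ip{e^{-\lambda t}\widehat Y_t}{k_t}$ on $\{0,\dots,T\}$. The difference equations are $\widehat X_{t+1}=\widehat X_t+\widehat b_t+\widehat\sigma_t\Delta W_t$ with $\widehat X_0=0$, and $\widehat Y_t+\widehat Z_t\Delta W_t+\Delta\widehat N_t=e^{-\lambda}(\widehat Y_{t+1}+\widehat f_{t+1})$. The cross terms with $\Delta W_t$, $\Delta R_t$ and $\Delta\widehat N_t$ vanish in expectation by the martingale and strong-orthogonality properties together with $\E[\Delta W_t(\Delta W_t)^\top\mid\F_t]=I_d$, exactly as in Steps 1 and 2 there. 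The boundary terms $\E[e^{-\lambda(T+1)}\ip{\widehat X_{T+1}}{p_{T+1}}]$ and $\E[e^{-\lambda(T+1)}\ip{\widehat Y_{T+1}}{k_{T+1}}]$ tend to $0$ by the Cauchy--Schwarz argument of Lemma~\ref{lem:transversality}, since all factors are in $L^{2,\lambda}$. Letting $T\to\infty$ (all series converge absolutely by Cauchy--Schwarz and the linear growth of $H_x,H_y,H_z$ and Lipschitz bounds for $\widehat b,\widehat\sigma,\widehat f$) should give
\[
0=\sum_{t=0}^\infty\E\Bigl[e^{-\lambda t}\bigl(\ip{\widehat b_t}{p_t}+\ip{\widehat\sigma_t}{q_t}-\ip{\widehat f_t}{k_t}+\ip{H_x(t)}{\widehat X_t}+\ip{H_y(t)}{\widehat Y_t}+\ip{H_z(t)}{\widehat Z_t}\bigr)\Bigr].
\]
The index shifts $t+1\to t$ are harmless because $\widehat X_0=0$ and $k_0=0$. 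The signs follow from the adjoint equation: the $p$-recursion contributes $+H_x$, and the $k$-recursion contributes $H_y,H_z$ against the $-\widehat f$ coming from the backward equation.

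Summing the expression for $\widehat l_t$, inserting the convexity inequality and then this identity, all state terms cancel. What remains is
\[
J(u)-J(\bar u)\ge\sum_{t}\E\bigl[e^{-\lambda t}\ip{H_u(t)}{u_t-\bar u_t}\bigr]\ge0
\]
by \eqref{cz5}. Hence $\bar u$ is optimal.

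The main obstacle is the sign and index bookkeeping in the nonlinear duality identity, which must match the conventions in \eqref{eq:adjoint}, including the time shift in $f$ and $H_x(t+1)$. A second point is justifying that every term is $\nu$-integrable so the limits can be split. Integrability of $\widehat H_t$ itself follows from the quadratic growth of $l$ and the linearity of $H$ in $(p,q,k)$ with bounded-Lipschitz coefficients.
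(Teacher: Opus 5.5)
Your proposal is correct and follows essentially the same route as the paper. You decompose $\widehat l_t$ through the Hamiltonian, apply the convexity gradient inequality, and cancel the state terms by summation by parts against $(p,q,k)$, with the boundary terms vanishing by Cauchy--Schwarz in $L^{2,\lambda}$. The only cosmetic difference is that you first pass to the infinite-horizon duality identity and then combine absolutely convergent series, whereas the paper combines on $\{0,\dots,T\}$ and shows directly that the remainder $\mathcal D_T$ tends to zero.
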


\begin{proof}
For any \(u\in\Uad\), denote by \((X,Y,Z,N)\) its corresponding state. Set
\[
\delta X_t=X_t-\bar X_t,
\qquad
\delta Y_t=Y_t-\bar Y_t,
\qquad
\delta Z_t=Z_t-\bar Z_t,
\qquad
\delta N_t=N_t-\bar N_t,
\qquad
\delta u_t=u_t-\bar u_t.
\]
By the convexity of \(H\) in \((x,y,z,u)\),
\begin{align}
&H(t,X_t,Y_t,Z_t,u_t,p_t,q_t,k_t)
-H(t,\bar X_t,\bar Y_t,\bar Z_t,\bar u_t,p_t,q_t,k_t)
\notag\\
&\quad\geq
\ip{H_x(t)}{\delta X_t}+\ip{H_y(t)}{\delta Y_t}
+\ip{H_z(t)}{\delta Z_t}
+\ip{H_u(t)}{\delta u_t}.
\label{eq:verification-convexity}
\end{align}
On the other hand, since \(H=l-\ip{b}{p}-\ip{\sigma}{q}+\ip{f}{k}\),
\begin{align}
l(t,X_t,Y_t,Z_t,u_t)-l(t,\bar X_t,\bar Y_t,\bar Z_t,\bar u_t)
&=H(t,X_t,Y_t,Z_t,u_t,p_t,q_t,k_t)
-H(t,\bar X_t,\bar Y_t,\bar Z_t,\bar u_t,p_t,q_t,k_t)
\notag\\
&\quad+
\ip{\delta b_t}{p_t}+\ip{\delta\sigma_t}{q_t}-\ip{\delta f_t}{k_t},
\label{eq:l-H-diff}
\end{align}
where
\begin{align*}
&\delta b_t=b(t,X_t,Y_t,Z_t,u_t)-b(t,\bar X_t,\bar Y_t,\bar Z_t,\bar u_t),\\
&\delta\sigma_t=\sigma(t,X_t,Y_t,Z_t,u_t)-\sigma(t,\bar X_t,\bar Y_t,\bar Z_t,\bar u_t),\\
&\delta f_t=f(t,X_t,Y_t,Z_t,u_t)-f(t,\bar X_t,\bar Y_t,\bar Z_t,\bar u_t).
\end{align*}
Combining \eqref{eq:verification-convexity} with \eqref{eq:l-H-diff}, we have 
\begin{align}
&\sum_{t=0}^{T}\E\left[e^{-\lambda t}
\bigl(l(t,X_t,Y_t,Z_t,u_t)-l(t,\bar X_t,\bar Y_t,\bar Z_t,\bar u_t)\bigr)\right]\geq
\sum_{t=0}^{T}\E\left[e^{-\lambda t}\ip{H_u(t)}{\delta u_t}\right]+\mathcal D_T,
\label{eq:verification-finite}
\end{align}
where
\begin{align*}
\mathcal D_T
:={}&\sum_{t=0}^{T}\E\left[e^{-\lambda t}
\left(\ip{H_x(t)}{\delta X_t}+\ip{H_y(t)}{\delta Y_t}+\ip{H_z(t)}{\delta Z_t}\right)\right]\\
&+\sum_{t=0}^{T}\E\left[e^{-\lambda t}
\left(\ip{\delta b_t}{p_t}+\ip{\delta\sigma_t}{q_t}-\ip{\delta f_t}{k_t}\right)\right].
\end{align*}

We next prove that \(\mathcal D_T\to0\). Notice
\[
\delta X_{t+1}=\delta X_t+\delta b_t+\delta\sigma_t\Delta W_t.
\]
With the help of the backward adjoint equation, we have
\begin{align}
&\E\left[e^{-\lambda(T+1)}\ip{\delta X_{T+1}}{p_{T+1}}\right]
-\E\left[\ip{\delta X_0}{p_0}\right]
\notag\\
&\quad=
\sum_{t=0}^{T}\E\left[e^{-\lambda t}
\left(\ip{\delta b_t}{p_t}+\ip{\delta\sigma_t}{q_t}\right)\right]
+
\sum_{t=1}^{T+1}\E\left[e^{-\lambda t}\ip{H_x(t)}{\delta X_t}\right].
\label{eq:verification-x-identity}
\end{align}
Since \(\delta X_0=0\), \eqref{eq:verification-x-identity} implies
\begin{align}
&\sum_{t=0}^{T}\E\left[e^{-\lambda t}
\left(\ip{\delta b_t}{p_t}+\ip{\delta\sigma_t}{q_t}+\ip{H_x(t)}{\delta X_t}\right)\right]
\notag\\
&\quad=
\E\left[e^{-\lambda(T+1)}\ip{\delta X_{T+1}}{p_{T+1}}\right]
-
\E\left[e^{-\lambda(T+1)}\ip{H_x(T+1)}{\delta X_{T+1}}\right].
\label{eq:verification-x-sum}
\end{align}
Similarly, we have
\begin{align}
&\E\left[e^{-\lambda(T+1)}\ip{\delta Y_{T+1}}{k_{T+1}}\right]
-\E\left[\ip{\delta Y_0}{k_0}\right]
\notag\\
&\quad=
\sum_{t=0}^{T}\E\left[e^{-\lambda t}
\left(\ip{H_y(t)}{\delta Y_t}+\ip{H_z(t)}{\delta Z_t}\right)\right]
-
\sum_{t=1}^{T+1}\E\left[e^{-\lambda t}\ip{\delta f_t}{k_t}\right].
\label{eq:verification-y-identity}
\end{align}
And due to \(k_0=0\), \eqref{eq:verification-y-identity} leads to
\begin{align}
&\sum_{t=0}^{T}\E\left[e^{-\lambda t}
\left(\ip{H_y(t)}{\delta Y_t}+\ip{H_z(t)}{\delta Z_t}-\ip{\delta f_t}{k_t}\right)\right]
\notag\\
&\quad=
\E\left[e^{-\lambda(T+1)}\ip{\delta Y_{T+1}}{k_{T+1}}\right]
+
\E\left[e^{-\lambda(T+1)}\ip{\delta f_{T+1}}{k_{T+1}}\right].
\label{eq:verification-y-sum}
\end{align}
Combining \eqref{eq:verification-x-sum} with \eqref{eq:verification-y-sum}, we have
\begin{align*}
\mathcal D_T={}&
\E\left[e^{-\lambda(T+1)}\ip{\delta X_{T+1}}{p_{T+1}}\right]
-\E\left[e^{-\lambda(T+1)}\ip{H_x(T+1)}{\delta X_{T+1}}\right]\\
&+\E\left[e^{-\lambda(T+1)}\ip{\delta Y_{T+1}}{k_{T+1}}\right]
+\E\left[e^{-\lambda(T+1)}\ip{\delta f_{T+1}}{k_{T+1}}\right].
\end{align*}
Actually, each term on the right-hand side of above equality converges to $0$ as $T\to\infty$. For instance, since $\delta X,p\in L^{2,\lambda}_{\F}$, as $T\to\infty$,
\[
\left|\E\left[e^{-\lambda(T+1)}\ip{\delta X_{T+1}}{p_{T+1}}\right]\right|
\leq
\left(\E[e^{-\lambda(T+1)}|\delta X_{T+1}|^2]\right)^{1/2}
\left(\E[e^{-\lambda(T+1)}|p_{T+1}|^2]\right)^{1/2}\longrightarrow0.
\]
The convergence of other terms can be gotten in the same way. Therefore, we have
$
\lim\limits_{T\to\infty}\mathcal D_T=0$.

Back to \eqref{eq:verification-finite}. By Lemma \ref{lem:cost-well-defined}, the cost functional is absolutely convergent, and consequently the left-hand side of \eqref{eq:verification-finite} converges to \(J(u)-J(\bar u)\) as $T\to\infty$. So taking \(T\to\infty\) in \eqref{eq:verification-finite} and noticing \eqref{cz5}, we have
\[
J(u)-J(\bar u)
\geq
\sum_{t=0}^{\infty}\E\left[e^{-\lambda t}\ip{H_u(t)}{u_t-\bar u_t}\right]\geq0.
\]
Hence \(J(u)\geq J(\bar u)\) for any \(u\in\Uad\), and \(\bar u\) is optimal.
\end{proof}

\subsection{Existence of optimal controls}\label{subsec:existence}

We conclude this section with a sufficient condition for the existence
of an optimal control on a prescribed admissible class
\(\mathcal K\subset\Uad\), based on the stability of the controlled FBS\(\Delta\)E and the continuity
of the cost functional in \(L^{2,\lambda}_{\F}\) space. To be specific, in this subsection we take a nonempty set \(\mathcal K\subset\Uad\) which is sequentially compact in the strong topology of \(L^{2,\lambda}_{\mathcal F}(0,\infty;\R^r)\).

\begin{proposition}[A compactness criterion for existence]\label{prop:optimal-existence}
Under Assumptions~\ref{ass:control-state}--\ref{ass:control-diff-cost}, there exists \(\bar u\in\mathcal K\) such that
\[
J(\bar u)=\inf_{u\in\mathcal K}J(u).
\]
\end{proposition}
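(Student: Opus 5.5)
The plan is the direct method: show that \(u\mapsto J(u)\) is sequentially continuous on \(\mathcal K\) with respect to the strong topology of \(L^{2,\lambda}_{\F}(0,\infty;\R^r)\). Then take a minimizing sequence \(u^n\in\mathcal K\) with \(J(u^n)\to\inf_{\mathcal K}J\). The infimum is finite or \(-\infty\), since \(J\) is real-valued by Lemma~\ref{lem:cost-well-defined}. By sequential compactness we extract a subsequence \(u^{n_j}\to\bar u\in\mathcal K\) in \(L^{2,\lambda}\). Continuity then gives \(J(\bar u)=\lim_j J(u^{n_j})=\inf_{\mathcal K}J\). In particular the infimum is finite and attained.

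\emph{Step 1 (continuity of the state map).} For \(u,u'\in\Uad\), I will rerun the argument of Lemma~\ref{lem:state-stability} with a general control difference \(u'-u\) in place of \(\varepsilon v\). The differences \((\widehat X,\widehat Y,\widehat Z)\) satisfy a system of the form \eqref{eq:nonhomogeneous-fbsde} with zero initial value. The homogeneous parts vanish at the origin and have the same state Lipschitz constants. The inhomogeneities \(\varphi,\psi,\theta\) are bounded by \(L^u_b|u'_t-u_t|\), \(L^u_\sigma|u'_t-u_t|\) and \(L^u_f|u'_t-u_t|\), respectively. For the \(\theta\) term, the shifted index \(t+1\) only costs a factor \(e^\lambda\). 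Assumption~\ref{ass:control-discount} is used as in that lemma, and Corollary~\ref{cor:nonhomogeneous-stability} then yields
\[
\|X'-X\|_\lambda^2+\|Y'-Y\|_\lambda^2+\|Z'-Z\|_\lambda^2\le C\|u'-u\|_\lambda^2 .
\]
Hence \(S^{n}:=(X^{n},Y^{n},Z^{n},u^{n})\to \bar S:=(\bar X,\bar Y,\bar Z,\bar u)\) in \(L^{2,\lambda}\).

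\emph{Step 2 (continuity of the cost).} This is the main point, because \(l\) has quadratic growth and is not Lipschitz. I will use the mean value form, as in Lemma~\ref{lem:cost-expansion}:
\[
l(t,S^n_t)-l(t,\bar S_t)=\int_0^1\ip{\nabla l\bigl(t,\bar S_t+s(S^n_t-\bar S_t)\bigr)}{S^n_t-\bar S_t}\,ds .
\]
By the linear growth of \(\nabla l\) in Assumption~\ref{ass:control-diff-cost}, together with \(\varrho\in L^{2,\lambda}\), Lemma~\ref{lem:controlled-convergence} (ii) applies with \(\Phi=\nabla l\) and \(S^\varepsilon\) replaced by the sequence \(S^n\); that lemma's proof is sequential anyway. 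It gives that the averaged gradient \(\bar l^n\) converges in \(L^{2,\lambda}\) to \(\nabla l(\cdot,\bar S)\), and in particular that \(\sup_n\|\bar l^n\|_\lambda<\infty\). Cauchy--Schwarz with respect to \(\nu\) then gives
\[
|J(u^n)-J(\bar u)|\le\sum_t\E\bigl[e^{-\lambda t}|l(t,S^n_t)-l(t,\bar S_t)|\bigr]\le \|\bar l^n\|_\lambda\,\|S^n-\bar S\|_\lambda\to0 .
\]

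One could avoid Lemma~\ref{lem:controlled-convergence} entirely by bounding \(|\bar l^n_t|\le\varrho_t+C(1+|\bar S_t|+|S^n_t|)\) directly. This bound is uniformly \(L^{2,\lambda}\)-bounded because \(S^n\) converges, and it already suffices for the last display. I would present this simpler bound, which makes Step 2 routine.

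The only genuine checks are the following two. First, Step 1 needs the nonhomogeneous estimate for arbitrary pairs of controls, not only for \(\bar u+\varepsilon v\); this holds because the proof of Lemma~\ref{lem:state-stability} never used the special form of the perturbation. Second, one must confirm that the discount condition in Corollary~\ref{cor:nonhomogeneous-stability} holds for the state system under Assumption~\ref{ass:control-discount}; this was already used implicitly in Lemma~\ref{lem:state-stability}.
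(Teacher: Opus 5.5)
Your proposal is correct and follows the paper's proof essentially step for step. Both arguments take a minimizing sequence and extract an $L^{2,\lambda}$-convergent subsequence. Both get state convergence from Lemma~\ref{lem:state-stability} (applied with $\varepsilon=1$, $v=u^n-\bar u$). Both then write $l(t,S^n_t)-l(t,\bar S_t)$ in mean-value form and apply Cauchy--Schwarz in $L^2(\nu)$.

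The only deviation is that you bound $\sup_n\|\bar l^n\|_\lambda$ directly from the linear growth of the gradient, instead of invoking Lemma~\ref{lem:controlled-convergence}~(ii). This is valid, and uniform boundedness is all the argument needs. Your observation that continuity forces the infimum to be finite is a point the paper leaves implicit.
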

\begin{proof}
Take \(\{u^n\}_{n\ge1}\subset\mathcal K\) which is a minimizing sequence. Sequential compactness guarantees that there exists a \(\bar{u}\in\mathcal K\) and a subsequence of \(\{u^n\}_{n\ge1}\), still denoted by \(\{u^n\}_{n\ge1}\), such that
\[
\|u^n-\bar u\|_{\lambda}\longrightarrow0\ \ \ {\rm as}\ n\to\infty.
\]
Let \((X^n,Y^n,Z^n)\) and \((\bar X,\bar Y,\bar Z)\) be the corresponding state processes of $u^n$ and $\bar{u}$, respectively.
Set
\[
\Theta_t^n:=(X_t^n,Y_t^n,Z_t^n,u_t^n)\ \ \ {\rm and}\ \ \
\bar\Theta_t:=(\bar X_t,\bar Y_t,\bar Z_t,\bar u_t).
\]
By the stability estimate in Lemma~\ref{lem:state-stability},
\[
\|\Theta^n-\bar\Theta\|_{\lambda}
\longrightarrow0.
\]

On the other hand, since each of \(l_x,l_y,l_z,l_u\) is continuous and satisfies the
linear-growth condition in
Assumption~\ref{ass:control-diff-cost}, for \(\phi\in\{l_x,l_y,l_z,l_u\}\),  we can define
\[
\bar\phi^n(t)
:=
\int_0^1
\phi\bigl(
t,\bar\Theta_t
+\rho(\Theta_t^n-\bar\Theta_t)
\bigr)\,d\rho.
\]
Lemma~\ref{lem:controlled-convergence} (ii) yields
\[
\bar l_x^n\longrightarrow
l_x(\cdot,\bar\Theta),\ \ \
\bar l_y^n\longrightarrow
l_y(\cdot,\bar\Theta),\ \ \
\bar l_z^n\longrightarrow
l_z(\cdot,\bar\Theta),\ \ \
\bar l_u^n\longrightarrow
l_u(\cdot,\bar\Theta)
\]
in their corresponding $L^{2,\lambda}_{\F}$ spaces. In particular,
\[
\sup_{n\ge1}
\left(
\|\bar l_x^n\|_{\lambda}
+\|\bar l_y^n\|_{\lambda}
+\|\bar l_z^n\|_{\lambda}
+\|\bar l_u^n\|_{\lambda}
\right)
<\infty.
\]

Then by the mean value theorem, we have
\begin{align*}
l(t,X_t^n,Y_t^n,Z_t^n,u_t^n)
-l(t,\bar X_t,\bar Y_t,\bar Z_t,\bar u_t)
=&
\left\langle
\bar l_{x}^n(t),X_t^n-\bar X_t
\right\rangle
+
\left\langle
\bar l_{y}^n(t),Y_t^n-\bar Y_t
\right\rangle
\\
&+
\left\langle
\bar l_{z}^n(t),Z_t^n-\bar Z_t
\right\rangle
+
\left\langle
\bar l_{u}^n(t),u_t^n-\bar u_t
\right\rangle .
\end{align*}
Consequently,
\begin{align*}
\sum_{t=0}^{\infty}
\E\left[
e^{-\lambda t}
\left|
l(t,X_t^n,Y_t^n,Z_t^n,u_t^n)
-l(t,\bar X_t,\bar Y_t,\bar Z_t,\bar u_t)
\right|
\right]
\le&
\|\bar l_x^n\|_{\lambda}
\|X^n-\bar X\|_{\lambda}
+
\|\bar l_y^n\|_{\lambda}
\|Y^n-\bar Y\|_{\lambda}
\\
&+
\|\bar l_z^n\|_{\lambda}
\|Z^n-\bar Z\|_{\lambda}
+
\|\bar l_u^n\|_{\lambda}
\|u^n-\bar u\|_{\lambda}.
\end{align*}
Bearing in mind that as $n\to\infty$,
\[
X^n\longrightarrow\bar X,\qquad
Y^n\longrightarrow\bar Y,\qquad
Z^n\longrightarrow\bar Z,\qquad
u^n\longrightarrow\bar u
\]
strongly in the corresponding $L^{2,\lambda}_{\F}$ spaces, we have as $n\to\infty$,
\[
\sum_{t=0}^{\infty}
\E\left[
e^{-\lambda t}
\left|
l(t,X_t^n,Y_t^n,Z_t^n,u_t^n)
-l(t,\bar X_t,\bar Y_t,\bar Z_t,\bar u_t)
\right|
\right]
\longrightarrow0,
\]
and thus
\[
J(u^n)\longrightarrow J(\bar u).
\]
Since \(\{u^n\}_{n\ge1}\) is a minimizing sequence,
\[
J(\bar u)
=
\lim_{n\to\infty}J(u^n)
=
\inf_{u\in\mathcal K}J(u),
\]
which put an end of the proof.

\end{proof}

\section{A Financial Example: Recursive Risk-Adjusted Portfolio Optimization}\label{sec:example}

We now apply the theoretical results to a discrete-time portfolio problem
with recursive risk adjustment. For simplicity, throughout this section,
\(m=n=d=1\), and the control is scalar. Consider a financial market
consisting of one risk-free asset and one risky asset. Denote by \(X_t\)
the investor's wealth, by \(u_t\) the amount invested in the risky
asset, by \(Y_t\) a recursive risk-evaluation process, and by \(Z_t\) the
sensitivity of this evaluation to the market innovation. The admissible
risky positions are constrained to the interval
\[
U=[\underline u,\overline u]\subset\R,
\qquad
\underline u\le\overline u.
\]
Consider the following model
\begin{equation*}\label{eq:portfolio-state}
\left\{
\begin{aligned}
&X_{t+1}
=X_t+r_fX_t+\theta u_t-\alpha Y_t+(\varsigma u_t+\rho Z_t)\Delta W_t,\\
&Y_t+Z_t\Delta W_t+\Delta N_t
=e^{-\lambda}\left(Y_{t+1}+aX_{t+1}+\beta Y_{t+1}+\gamma Z_{t+1}+\chi u_{t+1}\right),\\
&X_0=x_0.
\end{aligned}
\right.
\end{equation*}
Here the parameters \(r_f\), \(\theta\), and \(\varsigma\) represent the risk-free
return, the risky-asset risk premium and the risky-asset volatility,
respectively; \(\alpha\) and \(\rho\) describe the
feedback of the recursive risk variables \(Y_t\) and \(Z_t\) into the
wealth dynamics, respectively; \(a\), \(\beta\), and \(\gamma\) determine the
dependence of the recursive evaluation on the future state variables, respectively; \(\chi\) measures the direct effect of the risky
position on the recursive evaluation.

Thus in accordance with  FBS\texorpdfstring{\(\Delta\)}{Delta}E \eqref{eq:controlled-fbsde}, the coefficients in this model are
\[
b(t,x,y,z,u)=r_f x+\theta u-\alpha y,
\qquad
\sigma(t,x,y,z,u)=\varsigma u+\rho z,
\]
and
\[
f(t,x,y,z,u)=ax+\beta y+\gamma z+\chi u.
\]
Consequently, Assumption \ref{ass:control-state} is satisfied with the Lipschitz constants
\begin{align*}
&L_b^x=|r_f|,\qquad L_b^y=|\alpha|,\qquad L_b^z=0,\qquad L_b^u=|\theta|,\\
&L_\sigma^x=0,\qquad \ \ \ L_\sigma^y=0,\qquad\ \  L_\sigma^z=|\rho|,\ \ \ \  L_\sigma^u=|\varsigma|,\\
&L_f^x=|a|,\qquad \ L_f^y=|\beta|,\qquad  L_f^z=|\gamma|,\ \ \ \   L_f^u=|\chi|.
\end{align*}
Moreover, given a \(u^0\in U\), we know
\[
b(t,0,0,0,u^0)=\theta u^0,\qquad
\sigma(t,0,0,0,u^0)=\varsigma u^0,\qquad
f(t,0,0,0,u^0)=\chi u^0,
\]
and Assumption~\ref{ass:control-origin} is satisfied since
\[
\sum_{t=0}^{\infty}e^{-\lambda t}
\left(
|\theta u^0|^2+|\varsigma u^0|^2+|\chi u^0|^2
\right)<\infty.
\]

As for Assumption~\ref{ass:control-discount}, we use the same notations to have
\begin{align*}
&B_c=|\alpha|,
\qquad
\Sigma_c=|\rho|,
\qquad
\mathcal P_{0,c}=(1+|r_f|+|\beta|)^2+\gamma^2,\\
&\mathcal Q_{0,c}=\alpha^2+\rho^2,
\qquad
\mathcal K_c=(1+|r_f|+|\beta|)|\alpha|+|\gamma||\rho|,
\qquad
F_c=|a|,\\
&G_c=\sqrt{(1+|r_f|+|\beta|)^2+\gamma^2},\\
&\mathcal P_c
=(1+|r_f|+|\beta|)^2+\gamma^2
+(1+|r_f|+|\beta|)|\alpha|
+|\gamma||\rho|,\\
&\mathcal Q_c
=\alpha^2+\rho^2
+(1+|r_f|+|\beta|)|\alpha|
+|\gamma||\rho|,\\
&\mathcal R_c
=a^2
+|a|\sqrt{(1+|r_f|+|\beta|)^2+\gamma^2},\\
&\mathcal S_c
=(1+|r_f|+|\beta|)^2+\gamma^2
+|a|\sqrt{(1+|r_f|+|\beta|)^2+\gamma^2}.
\end{align*}

So we suppose the discount condition
\eqref{eq:control-global-discount},
and Assumption~\ref{ass:control-discount} is satisfied.

In this model, the running cost is in a quadratic form
\begin{equation*}\label{eq:portfolio-cost}
l(t,x,y,z,u)=\frac{q_x}{2}(x-\bar x)^2+\frac{q_y}{2}y^2+\frac{q_z}{2}z^2+\frac{r_u}{2}u^2,
\end{equation*}
where \(\bar x\) is the target, and \(q_x\geq0\), \(q_y\geq0\), \(q_z\geq0\) and \(r_u>0\) are constants and penalize wealth
deviation, recursive risk, risk sensitivity and the risky position, respectively. It is immediate that the running cost and its first-order derivatives
satisfy the growth conditions in
Assumption~\ref{ass:control-diff-cost}, with
\(\varrho_t\equiv0\).

Then according to \eqref{eq:Hamiltonian}, the Hamiltonian is
\begin{align}\label{cz6}
H(t,x,y,z,u,p,q,k)
={}&\frac{q_x}{2}(x-\bar x)^2+\frac{q_y}{2}y^2+\frac{q_z}{2}z^2+\frac{r_u}{2}u^2\notag\\
&-(r_fx+\theta u-\alpha y)p-(\varsigma u+\rho z)q+(ax+\beta y+\gamma z+\chi u)k,
\end{align}
Then the adjoint equation has the form
\begin{equation*}\label{eq:portfolio-adjoint}
\left\{
\begin{aligned}
&k_{t+1}
=k_t+q_y\bar Y_t+\alpha p_t+\beta k_t
+\bigl(q_z\bar Z_t-\rho q_t+\gamma k_t\bigr)\Delta W_t,\\
&p_t+q_t\Delta W_t+\Delta R_t
=e^{-\lambda}\left[
(1+r_f)p_{t+1}
-q_x(\bar X_{t+1}-\bar x)
-a k_{t+1}
\right],\\
&k_0=0.
\end{aligned}
\right.
\end{equation*}
and
\[
H_u(t,x,y,z,u,p,q,k)=r_u u-\theta p-\varsigma q+\chi k.
\]

Hence by Theorem \ref{thm:smp}, the maximum principle yields
\[
(r_u\bar u_t-\theta p_t-\varsigma q_t+\chi k_t)(u-\bar u_t)\geq0,
\qquad u\in U.
\]
Note that here \(U=[\underline u,\overline u]\neq\mathbb{R}\). We seek the optimal control based on the Hamiltonian \eqref{cz6}.
For the terms depending on $u$ in the Hamiltonian, we complete the square as below
\[
\frac{r_u}{2}u^2-(\theta p_t+\varsigma q_t-\chi k_t)u
=\frac{r_u}{2}(u-u_t^{\mathbb{R}})^2-
\frac{r_u}{2}|u_t^{\mathbb{R}}|^2,\ \ \ t\geq0,
\]
where $u_t^{\mathbb{R}}:=\frac{\theta p_t+\varsigma q_t-\chi k_t}{r_u}$, $t\geq0$, is the optimal control in the case that $U=\mathbb{R}$.
Since \(r_u>0\), minimizing the Hamiltonian over the closed convex set \(U=[\underline u,\overline u]\) is equivalent to minimizing the distance \(|u-u_t^{\mathbb{R}}|\). Thus the candidate optimal control in this model is
\begin{equation}\label{eq:portfolio-optimal-control}
\bar u_t=
\Pi_{[\underline u,\overline u]}\left(\frac{\theta p_t+\varsigma q_t-\chi k_t}{r_u}\right),\ \ \ t\geq0,
\end{equation}
where the projection
\[
\Pi_{[\underline u,\overline u]}(x)=
\begin{cases}
\underline u,&x<\underline u,\\
x,&\underline u\le x\le\overline u,\\
\overline u,&x>\overline u.
\end{cases}
\]
Noticing for $t\geq0$,
$
|\bar u_t|\le \max\{|\underline u|,|\overline u|\}
$, we know
$\sum_{t=0}^{\infty}\E[e^{-\lambda t}|\bar u_t|^2]<\infty
$, and thus $\bar{u}$ belongs to the admissible set.


On the other hand, since \(b,\sigma\) and \(f\) are affine and the running cost
is convex quadratic when \(q_x,q_y,q_z\ge0\) and \(r_u>0\),
the Hamiltonian is convex in \((x,y,z,u)\). Hence, by Theorem \ref{thm:verification}, any admissible control whose associated state and adjoint processes satisfy the coupled state equation, the adjoint equation, and the projection \eqref{eq:portfolio-optimal-control} is optimal. Thus, \eqref{eq:portfolio-optimal-control} provides a sufficient optimality criterion, based on the existence of solution to the corresponding state--adjoint system.

Actually, \eqref{eq:portfolio-optimal-control} gives a
projection characterization depending on the solution \((p,q,k)\) to the adjoint equation rather than a closed-form state feedback,
The terms \(\theta p_t\), \(\varsigma q_t\), and
\(-\chi k_t\) represent the contributions of wealth, martingale
sensitivity, and recursive-risk feedback, respectively, while the
projection \(\Pi_U\) enforces the portfolio constraint.

\end{document}